
\documentclass[preprint,12 pt]{elsarticle}
\usepackage{}



\usepackage{amsfonts}
\usepackage{amsmath}
\usepackage{amssymb}
\usepackage{graphics}
\usepackage{graphicx}
\usepackage{mathrsfs}
\usepackage{indentfirst}
\usepackage{stmaryrd}
\usepackage{latexsym}
\usepackage{xypic}
\hoffset -1.2cm \voffset -1.2cm \textheight 230mm \textwidth 148mm

\newtheorem{definition}{Definition}[section]

\newtheorem{theorem}[definition]{Theorem}
\newtheorem{proposition}[definition]{Proposition}
\newtheorem{example}[definition]{Example}

\newtheorem{remark}[definition]{Remark}
\newtheorem{corollary}[definition]{Corollary}
\newproof{proof}{\textbf{Proof}}



\journal{Iranian Journal of Fuzzy Systems}

\begin{document}

\begin{frontmatter}



\title{\textbf{Monadic NM-algebras}}


\author{Jun Tao Wang$^a$,~Xiao Long Xin$^{a,*}$,Peng Fei He$^b$}
\cortext[cor1]{Corresponding author. \\
Email addresses: wjt@stumail.nwu.edu.cn(J.T. Wang), xlxin@nwu.edu.cn (X.L. Xin),\\ hepengf1986@126.com (P. F. He)}

\address[A]{School of Mathematics, Northwest University, Xi'an, 710127, China}
\address[B]{School of Mathematics and Information Science, Shaanxi Normal University, Xi'an, 710119, China}

\begin{abstract}
In this paper, we introduce and investigate monadic NM-algebras: a variety of NM-algebras equipped with  universal quantifiers. Also, we obtain some conditions under which monadic NM-algebras become  monadic Boolean algebras. Besides, we show that the variety of monadic NM-algebras faithfully the axioms on quantifiers in monadic predicate NM logic. Furthermore, we discuss relations between monadic NM-algebras and  some related structures, likeness modal NM-algebras and rough approximation spaces. In addition, we investigate monadic filters in monadic NM-algebras. In particular, we characterize simple and subdirectly irreducible monadic NM-algebras and obtain a representation theorem for  monadic NM-algebras. Finally, we present monadic NM-logic and prove the (chain) completeness of monadic NM-logic based on monadic NM-algebras. These results constitute a crucial first step for providing a solid algebraic foundation for the monadic predicate NM logic.
\end{abstract}

\begin{keyword} many-valued logical algebra; monadic NM-algebra; monadic filter; subdirect representation; monadic NM-logic



\MSC 06D35 \sep 06B99
\end{keyword}

\end{frontmatter}

\section{Introduction}
\label{intro}It is well known that non-classical logics take the advantage of the classical logics to handle uncertain information and fuzzy information. While Boolean algebras are algebraic semantics for classical logics, many-valued logical algebras serve as algebraic semantics for non-classical logics. Until now, various kinds of many-valued logical algebras have been extensively introduced
and studied, for example, MV-algebras, BL-algebras,
G\"{o}del algebras, MTL-algebras and NM-algebras. Among these many-valued logical algebras, MTL-algebras are the most significant because the others are
all particular
cases of MTL-algebras. In fact, MTL-algebras contain all algebras
induced by left continuous t-norm and their residua. As an most
 important class of MTL-algebras,
nilpotent minimum algebras (NM-algebras for short) are MTL-algebras
 satisfying the low of involution of the negation $\neg\neg x=x$
and $(\neg(x\odot y))\vee(x\wedge y)\rightarrow (x\odot y))=1$, are
the corresponding algebraic structures of nilpotent minimum
 logic \cite{Bianchi}.
 Well known many-valued logic such as  {\L}ukasiewicz logic, G\"{o}del
logic and product logic can be regarded as schematic extensions of
 basic logic
(BL for short), which is a general framework of many-valued logic for
capturing the tautologies of continuous t-norm and their residua
\cite{Hajek}.
 It must be pointed out here that the NM logic is not an axiomatic
 extension of BL because the corresponding t-norm in the NM-algebra
 is not continuous but
only left-continuous.
  Viewing the axioms of NM-algebras, we note that NM-algebras
are different from BL-algebras, since they does not satisfy
the divisibility condition $x\wedge y = x\odot(x\rightarrow y)$.

Monadic (Boolean) algebras in the sense of Halmos \cite{Halmos}
 are Boolean algebras equipped with a closure operator $\exists$ whose range is
a subalgebra of Boolean algebra. This operator abstracts algebraic properties of the standard existential quantifier ``for some". The name ``monadic" comes
from the connection with predicate logics for languages having one placed predicates and a single quantifier. Monadic Boolean algebras have been deeply
investigated in \cite{Henkin, Nemeti}. Inspired by this,  monadic Heyting algebras, an algebraic model of the one-variable
 fragment of the intuitionistic predicate logic, were introduced and developed in \cite{ Bezhanishvili, Monteiro 1,Monteiro 2}. Subsequently, monadic MV-algebras, an algebraic model of the one element fragment of {\L}ukasiewicz predicate
logic, were introduced and studied in \cite{Nola,Rutledge,Rachunek2}.  After then, monadic BL-algebras,
monadic residuated lattices, monadic basic algebras and monadic residuated $\ell$-monoids were introduced and investigated in
\cite{Grigolia,Rachunek1,Chajda,Rachunek3}.
 In the above-mentioned monadic algebras, since both MV-algebras and basic algebras satisfy De Morgan and double negation laws, in the definition
 of the corresponding monadic algebras, it is possible to use only one of the existential and universal quantifiers as initial, the other is then
definable as the dual of the original one. On the contrary, the definitions of monadic Heyting algebras, monadic BL-algebras and monadic residuated
lattices require using both kinds of quantifiers simultaneously, because these quantifiers are not mutually interdefinable.

In this paper, we will investigate monadic NM-algebras  and prove the (chain) completeness of the monadic NM-logics.
 One of our aims is to introduce the variety of NM-algebras endowed with universal quantifiers. In particular, the paper \cite{Bianchi} was the first
attempt to define monadic NM-algebras using the existential and universal quantifiers (analogously as for monadic BL-algebras). But it seems to be more
appropriate to introduce such monadic algebras similarly as the monadic MV-algebras since NM-algebras also satisfy De Morgan and double negation laws.
On the other hand, the main focus of existing research about quantifiers is on MV-algebras, BL-algebras, Heyting algebras, basic algebras and residuated $\ell$-monoids,
 etc. All the above-mentioned algebraic structures satisfy the divisibility condition. In this case, the conjunction $\odot$ on the unit interval corresponds
to a continuous t-norm.  However, there are few research about quantifiers on residuated structures without the divisibility condition so far. In fact,
NM-algebras are the residuated structure, which does not satisfy the divisibility condition. Therefore, it is interesting to study monadic NM-algebras.
These are motivations for us to investigate monadic NM-algebras.

This paper is organized as follows: In Section 2, we review some basic definitions and results about NM-algebras. In Section 3, we introduce the notion of
monadic NM-algebras and investigate some related properties of them. Also, we give some conditions under which monadic NM-algebras become  monadic Boolean
algebras and discuss relations between monadic NM-algebras and related structures. In Section 4, we characterize simple and subdirectly irreducible monadic
NM-algebras and prove a representation theorem for monadic NM-algebras. In Section 5, we present an axiom system of monadic NM-logic and prove the (chain)
completeness of this logic.

\section{Preliminaries}
In this section, we summarize some definitions and results about NM-algebras, which will be used in the following sections.
\begin{definition}\emph{\cite{Esteva} An algebraic structure $(L,\wedge,\vee,\odot,\rightarrow,0,1)$ of type $(2,2,2,2,0,0)$ is called an \emph{NM-algebra} if it satisfies the following conditions:
\begin{enumerate}[(1)]
\item $(L,\wedge,\vee,0,1)$ is a bounded lattice,
\item $(L,\odot,1)$ is a commutative monoid,
\item $x\odot y\leq z$ if and only if $x\leq y\rightarrow z$,
\item $(x\rightarrow y)\vee(y\rightarrow x)=1$,
\item $(x\odot y\rightarrow 0)\vee(x\wedge y\rightarrow x\odot y)=1$,
\item $(x\rightarrow 0)\rightarrow 0=x$,
\end{enumerate}
for any $x,y,z\in L$.}
\end{definition}

In what follows, by $L$ we denote the universer of
an NM-algebra $(L,\wedge,\vee,\odot,\rightarrow,0,1)$.
For any $x,y\in L$, we define $\neg x=x\rightarrow 0$,
$\neg\neg x=\neg(\neg x)$, $x\odot y=\neg( x\rightarrow \neg y)$,
$x\oplus y=\neg x\rightarrow y$, $x^0=1$ and $x^n = x\odot x\cdots \odot x$
for any natural number $n\in N$. It is proved that
$x\oplus y=\neg(\neg x\odot \neg y)$.

\begin{proposition}\emph{\cite{Esteva,Wang1,Zhou}} In any NM-algebra $L$, the following properties hold: for all $x,y,z\in L$,
\begin{enumerate}[(1)]
               \item $x\leq y$ if and only if $x\rightarrow y=1$,
               \item $x\leq y\rightarrow x$,
               \item $x\leq y$ implies $y\rightarrow z\leq x\rightarrow z$,
               \item $x\leq y$ implies $z\rightarrow x\leq z\rightarrow y$,
               \item $x\vee y=((x\rightarrow y)\rightarrow y)\wedge((y\rightarrow x)\rightarrow x)$,
               \item $x\odot \neg x=0$ and $x\oplus \neg x=1$,
               \item $(x\odot y)\rightarrow z=x\rightarrow(y\rightarrow z)$,
               \item $x\rightarrow y=x\rightarrow(x\wedge y)$,
               \item $x\rightarrow (y\wedge z)=(x\rightarrow y)\wedge(x\rightarrow z)$,
               \item $(x\vee y)\rightarrow z=(x\rightarrow z)\wedge(y\rightarrow z)$,
               \item $(x\rightarrow y)^n\vee(y\rightarrow x)^n=1$,
               \item $(x\wedge y)\rightarrow z=(x\rightarrow z)\vee(y\rightarrow z)$.
             \end{enumerate}
\end{proposition}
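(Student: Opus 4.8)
The plan is to derive all twelve items by the usual residuated-lattice calculus, using the adjunction $x\odot y\leq z\Leftrightarrow x\leq y\rightarrow z$, the commutative monoid structure, boundedness of the lattice, the involution law $\neg\neg x=x$, and---for the last three items only---the prelinearity axiom $(x\rightarrow y)\vee(y\rightarrow x)=1$. I would prove the items roughly in the stated order, letting each use the earlier ones.

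First I would record two auxiliary facts. \emph{(a) $\odot$ is isotone in each argument:} from $a\odot c\leq a\odot c$ and the adjunction, $c\leq a\rightarrow(a\odot c)$, so $b\leq c$ forces $b\leq a\rightarrow(a\odot c)$, whence $a\odot b\leq a\odot c$. \emph{(b) $\odot$ distributes over binary joins:} $(a\vee b)\odot c=(a\odot c)\vee(b\odot c)$, where $\geq$ is isotonicity and $\leq$ follows from the adjunction once one checks that both $a$ and $b$ lie below $c\rightarrow((a\odot c)\vee(b\odot c))$. With (a), item (1) is immediate: $x\leq y\Leftrightarrow x\odot 1\leq y\Leftrightarrow 1\leq x\rightarrow y\Leftrightarrow x\rightarrow y=1$, since $1$ is both the monoid unit and the top element. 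Item (2) is $x\odot y\leq x\odot 1=x$ read through the adjunction; items (3) and (4) are the monotonicity of $\rightarrow$, obtained by cancelling with the adjunction (for (4): $(z\rightarrow x)\odot z\leq x\leq y$, so $z\rightarrow x\leq z\rightarrow y$; similarly for (3)). Item (7) is currying: for every $w$, $w\leq(x\odot y)\rightarrow z\Leftrightarrow w\odot x\odot y\leq z\Leftrightarrow w\leq x\rightarrow(y\rightarrow z)$, so the two sides have the same principal downset and hence coincide. Item (6): $x\odot\neg x=x\odot(x\rightarrow 0)\leq 0$ by the adjunction, and $x\oplus\neg x=\neg x\rightarrow\neg x=1$ by (1) (equivalently $\neg(\neg x\odot\neg\neg x)=\neg(\neg x\odot x)=\neg 0=1$, using the involution law).

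Items (8)--(10) all follow one pattern: one inclusion is monotonicity of $\rightarrow$ (items (3),(4)) applied to $x\wedge y\leq y$, to $y\wedge z\leq y,z$, and to $x,y\leq x\vee y$ respectively; the reverse inclusion comes from the adjunction after a product inequality---for (8), $(x\rightarrow y)\odot x\leq y$ and $(x\rightarrow y)\odot x\leq 1\odot x=x$ give $(x\rightarrow y)\odot x\leq x\wedge y$; for (9), $((x\rightarrow y)\wedge(x\rightarrow z))\odot x\leq y\wedge z$; for (10), $((x\rightarrow z)\wedge(y\rightarrow z))\odot(x\vee y)$ is distributed via (b) and is $\leq z$.

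The genuinely substantial items are (5), (11), (12), and the key device in all three is to \emph{multiply}---not meet---with the prelinearity identity $1=(x\rightarrow y)\vee(y\rightarrow x)$ and then distribute $\odot$ over $\vee$ via (b). For (5): $x,y\leq(x\rightarrow y)\rightarrow y$ (use $(x\rightarrow y)\odot x\leq y$ and $(x\rightarrow y)\odot y\leq y$), and symmetrically, gives $x\vee y\leq((x\rightarrow y)\rightarrow y)\wedge((y\rightarrow x)\rightarrow x)$; conversely, calling the right-hand side $a$, $a=a\odot 1=(a\odot(x\rightarrow y))\vee(a\odot(y\rightarrow x))\leq y\vee x$ because $a\odot(x\rightarrow y)\leq((x\rightarrow y)\rightarrow y)\odot(x\rightarrow y)\leq y$ and symmetrically. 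For (12): $\geq$ is monotonicity of $\rightarrow$ against $x\wedge y\leq x,y$; for $\leq$, put $b=(x\wedge y)\rightarrow z$, write $b=b\odot 1=(b\odot(x\rightarrow y))\vee(b\odot(y\rightarrow x))$, and note $b\odot(x\rightarrow y)\leq x\rightarrow z$ since $(b\odot(x\rightarrow y))\odot x=b\odot((x\rightarrow y)\odot x)\leq b\odot(x\wedge y)\leq z$ (using $(x\rightarrow y)\odot x\leq x\wedge y$ from (8)), and symmetrically $b\odot(y\rightarrow x)\leq y\rightarrow z$. For (11): starting from $(x\rightarrow y)\vee(y\rightarrow x)=1$, prove $(x\rightarrow y)\vee(y\rightarrow x)^n=1$ by induction on $n$ (multiplying by the prelinearity identity and using $u\odot v\leq u$, i.e.\ integrality, to absorb every mixed term into a power of $y\rightarrow x$), then $(x\rightarrow y)^m\vee(y\rightarrow x)^n=1$ by a second induction on $m$, and finally set $m=n$. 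I expect the bookkeeping in (11)--(12)---in particular remembering to multiply by the prelinearity identity so that inequalities like $(x\rightarrow y)\odot x\leq x\wedge y$ can be fed into the adjunction, rather than naively meeting (which is false in NM-algebras)---to be the one delicate point; everything else is routine adjunction chasing.
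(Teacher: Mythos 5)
Your proof is correct throughout: the adjunction-chasing for (1)--(4) and (6)--(10), the use of distributivity of $\odot$ over $\vee$ together with prelinearity for (5) and (12), and the double induction for (11) all go through, and you rightly identify that meeting with the prelinearity identity would fail where multiplying by it works. Note, however, that the paper itself offers no proof of this proposition --- it is quoted as a known preliminary from the cited references (Esteva--Godo, Wang--Zhou, Zhou--Zhao) --- so there is no in-text argument to compare against; your derivation is essentially the standard MTL/residuated-lattice proof found in those sources.
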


We have the following characterizations for Boolean algebras.
\begin{theorem}\emph{\cite{Zhou}} Let $L$ be an NM-algebra. Then the following statements are equivalent:
\begin{enumerate}[(1)]
  \item $L$ is a Boolean algebra,
  \item $x\odot y=x\wedge y$ for any $x,y\in L$,
  \item $x\oplus y=x\vee y$ for any $x,y\in L$.
\end{enumerate}
\end{theorem}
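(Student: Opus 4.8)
The plan is to prove the three equivalences by running $(1)\Rightarrow(2)\Rightarrow(3)$ and closing the loop with $(3)\Rightarrow(2)\Rightarrow(1)$; all of this is short except $(1)\Rightarrow(2)$, which is where the real work lies. Throughout I read statement $(1)$, ``$L$ is a Boolean algebra'', as saying that the reduct $(L,\wedge,\vee,\neg,0,1)$ is a Boolean algebra; since every NM-algebra is involutive ($\neg\neg x=x$), this is equivalent to $x\vee\neg x=1$ for all $x$, and then also $x\wedge\neg x=\neg(\neg x\vee\neg\neg x)=\neg(\neg x\vee x)=\neg 1=0$ by the De Morgan law (Proposition~2.2(12)), so $\neg$ is the lattice complement.

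The easy implications go as follows. For $(2)\Rightarrow(3)$: if $x\odot y=x\wedge y$ then $x\oplus y=\neg(\neg x\odot\neg y)=\neg(\neg x\wedge\neg y)=\neg\neg x\vee\neg\neg y=x\vee y$, using De Morgan (Proposition~2.2(10)) and involutivity. For $(3)\Rightarrow(2)$: from $x\oplus y=x\vee y$ and $x\oplus y=\neg(\neg x\odot\neg y)$ we get $\neg x\odot\neg y=\neg(x\vee y)=\neg x\wedge\neg y$, and since every element of $L$ has the form $\neg u$ this is exactly $(2)$. For $(2)\Rightarrow(1)$: if $x\odot y=x\wedge y$ then $x\wedge\neg x=x\odot\neg x=0$ (Proposition~2.2(6)) and $x\vee\neg x=\neg(\neg x\wedge\neg\neg x)=\neg(\neg x\wedge x)=\neg 0=1$, so $\neg$ is a complement; moreover $\wedge=\odot$ preserves joins (the product of a residuated lattice does), hence the lattice reduct is distributive, and $L$ is a bounded distributive complemented lattice, i.e. a Boolean algebra.

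The heart of the matter is $(1)\Rightarrow(2)$: from the lattice-with-complement one must recover the monoid operation. I would do this in two steps. First, show $x\to y=\neg x\vee y$. The inequality $\neg x\vee y\le x\to y$ holds in any NM-algebra, because $\neg x\odot x=0\le y$ gives $\neg x\le x\to y$ by residuation (Definition~2.1(3)) and $y\le x\to y$ by Proposition~2.2(2). For the reverse inequality, use monotonicity of $\odot$ to obtain $(x\wedge\neg y)\odot(x\to y)\le (x\odot(x\to y))\wedge(\neg y\odot(x\to y))\le y\wedge\neg y=0$, where $x\odot(x\to y)\le y$ by residuation, $\neg y\odot(x\to y)\le\neg y\odot 1=\neg y$, and the last equality is the only place the Boolean hypothesis is used. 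Hence $x\wedge\neg y\le(x\to y)\to 0=\neg(x\to y)$, and applying $\neg$ gives $x\to y\le\neg(x\wedge\neg y)=\neg x\vee y$. Second, conclude $x\odot y=\neg(x\to\neg y)=\neg(\neg x\vee\neg y)=\neg\neg x\wedge\neg\neg y=x\wedge y$.

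I expect the obstacle to be Step~1 of $(1)\Rightarrow(2)$ — pinning down exactly which NM-identities are needed and noticing that the Boolean law enters only to force $y\wedge\neg y=0$, which collapses $(x\wedge\neg y)\odot(x\to y)$ to $0$; everything else is routine manipulation with the residuation adjunction and the already-listed properties of NM-algebras (in particular the De Morgan laws from Proposition~2.2(10) and~(12)). One should also check that no circular appeal to lattice distributivity sneaks in before it has been derived; in the argument above it does not.
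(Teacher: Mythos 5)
Your proof is correct. Note, however, that the paper itself gives no proof of this statement: it is Theorem 2.3 of the preliminaries, quoted from the reference [Zhou] on R$_0$-algebras, so there is no in-paper argument to compare against. Your self-contained derivation is sound: the easy cycle $(2)\Rightarrow(3)\Rightarrow(2)\Rightarrow(1)$ uses only the De Morgan laws obtained from Proposition 2.2(10),(12) with $z=0$, involution, $x\odot\neg x=0$, the identity $x\oplus y=\neg(\neg x\odot\neg y)$ stated in the preliminaries, and the (standard, though not listed in Proposition 2.2) fact that $\odot$ distributes over $\vee$ in a residuated lattice, which gives distributivity once $\odot=\wedge$. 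The key step $(1)\Rightarrow(2)$ is also right: from $y\wedge\neg y=0$ you get $(x\wedge\neg y)\odot(x\to y)=0$, hence $x\to y\le\neg(x\wedge\neg y)=\neg x\vee y$, and combined with the always-valid $\neg x\vee y\le x\to y$ and $x\odot y=\neg(x\to\neg y)$ this yields $\odot=\wedge$. It is worth observing that your argument never invokes the NM-specific axioms (prelinearity or the WNM identity), so it actually establishes the equivalence in any involutive bounded integral commutative residuated lattice (IMTL-algebra); this is more general than the cited statement and is a legitimate, clean route to it.
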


 Let $L$ be an NM-algebra.
A nonempty subset $F$ of $L$ is called a \emph{filter} of $L$ if
 it satisfies: (1) $x,y\in F$ implies $x\odot y\in F$; (2) $x\in F$, $y\in L$ and $x\leq y$ implies $y\in F$.  A filter $F$ of $L$ is called a \emph{proper filter} if $F\neq L$. A proper filter $F$ of $L$ is called a \emph{maximal filter} if it is not contained in any proper filter of $L$. A proper filter $F$ of $L$ is called a \emph{prime filter} if for each $x,y\in L$ and $x\vee y\in F$, implies $x\in F$ or $y\in F$. A prime filter $F$ is said to be \emph{minimal} if $F$ is a minimal element in the set of prime filters of $L$ ordered by inclusion. For a nonempty subset $X$ of $L$, we denote by $\langle X\rangle$ is the filter generated by $X$. Clearly, we have $\langle X\rangle=\{x\in L|x\geq x_1\odot x_2\odot\cdots\odot x_n$, for some $n\in N$ and some $x_i\in X\}$. In particular, the principal filter generated by an element $x\in L$ is $\langle x\rangle=\{y\in L|y\geq x^n$, for some $n\in N\}$. If $F$ is a filter of $L$ and $x\in L$, then $\langle F\cup x\rangle=\{y\in L|y \geq f\odot x^n$, for some $f\in F, n\in N\rangle$. We denote by $F[L]$ be the set of all filers of $L$ and obatin that  $(F[L],\subseteq )$ forms a complete lattice \cite{Turunen,Esteva,Wang1,Zhang2}.

Let $F$ be a filter of an NM-algebra $L$. Define the \emph{congruence} $\equiv_F$ on $L$ by $x\equiv_Fy$ if and only if $x\rightarrow y\in F$ and $y\rightarrow x\in F$. The set of all congruence classes is denote by $L/F$, i.e. $L/F=\{[x]|x\in L\}$, where $[x]=\{x\in L|x\equiv_Fy\}$. Then $L/F$ becomes an NM-algebra with the natural operations induced from those of $L$. It is easily seen that if $F$ is a prime filter of $L$ if and only if $L/F$ is a linearly ordered NM-algebra \cite{Pei1,Wang1,Zhou}.


\begin{definition}\emph{\cite{Burris} Let $L$ be an NM-algebra. Then $L$ is called to be:}
\begin{enumerate}[(1)]
  \item \emph{\emph{simple} if it has exactly two filters: $\{1\}$ and $L$.}
  \item \emph{\emph{subdirectly irreducible} if among the nontrivial congruence of $L$ there exists the least one.}
\end{enumerate}
\end{definition}

 At the end of this section, we review the known main results about representation theory of NM-algebras, which is helpful for studying a representation theorem for monadic NM-algebras.

\begin{theorem}\emph{\cite{Zhang2}}  Let $L$ be an NM-algebra and $P$ be a filter of $L$. Then the following statements are equivalent:
\begin{enumerate}[(1)]
  \item $P$ is a minimal prime,
  \item $P=\cup\{a^\bot|a\in P\}$, where $a^\bot=\{x\in L|a\vee x=1\}$.
\end{enumerate}
\end{theorem}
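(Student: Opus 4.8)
The plan is to prove the two implications separately. Throughout I use that $L$ is a distributive lattice (a consequence of the prelinearity axiom $(x\to y)\vee(y\to x)=1$), that every filter of $L$ is closed under $\wedge$ (since $x\odot y\le x\wedge y$), and the prime separation theorem for distributive lattices: a filter and an ideal that are disjoint can be separated by a prime filter --- equivalently, every proper ideal extends to a prime ideal, whose set-complement is a prime filter. Write $\Omega(P)=\bigcup\{a^{\bot}\mid a\in L\setminus P\}$; statement $(2)$ then amounts to the two inclusions $\Omega(P)\subseteq P$ and $P\subseteq\Omega(P)$.

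For $(1)\Rightarrow(2)$, the inclusion $\Omega(P)\subseteq P$ uses only that $P$ is prime: if $x\in a^{\bot}$ with $a\notin P$ then $a\vee x=1\in P$, so $a\in P$ or $x\in P$, and since $a\notin P$ this gives $x\in P$. For $P\subseteq\Omega(P)$, fix $b\in P$ and suppose toward a contradiction that $b\notin\Omega(P)$, i.e. $b^{\bot}\subseteq P$. Since $P$ is prime, $L\setminus P$ is an ideal, so the ideal $J$ generated by $(L\setminus P)\cup\{b\}$ is exactly $\{z\mid z\le d\vee b\text{ for some }d\in L\setminus P\}$; if $1\in J$ then $d\vee b=1$ for some $d\notin P$, i.e. $d\in b^{\bot}\setminus P$, which is impossible. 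Hence $J$ is proper; extend it to a prime ideal $J'$ and set $Q:=L\setminus J'$. Then $Q$ is a prime filter, proper (as $0\in L\setminus P\subseteq J'$) and nonempty (as $1\notin J'$), and $Q\subseteq L\setminus\big((L\setminus P)\cup\{b\}\big)=P\setminus\{b\}\subsetneq P$, contradicting minimality of $P$. So $b\in\Omega(P)$.

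For $(2)\Rightarrow(1)$, assume $P=\Omega(P)$. First $P$ is proper: if $P=L$ the index set $L\setminus P$ is empty, so $\Omega(P)=\emptyset\ne P$. The key observation is that any prime filter $Q$ with $Q\subseteq P$ satisfies $Q=P$: if $b\in P\setminus Q$, then $b\in\Omega(P)$ produces $a\notin P$ with $a\vee b=1\in Q$, and primeness of $Q$ forces $a\in Q$ or $b\in Q$; since $b\notin Q$ we get $a\in Q\subseteq P$, contradicting $a\notin P$. Granting this, minimality of $P$ among prime filters is immediate. That $P$ is prime is then obtained by using the separation theorem to produce a prime filter $Q$ disjoint from $L\setminus P$ (i.e. $Q\subseteq P$), whence $Q=P$ by the observation; the one point here that does not reduce to formal manipulation is that $L\setminus P$ behaves well enough under finite joins for such a prime filter to exist, and this is exactly where the full strength of $P=\Omega(P)$ --- beyond the consequence ``$a\vee b=1$ implies $a\in P$ or $b\in P$'' --- must be invoked.

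The step I expect to be the main obstacle is the construction in $(1)\Rightarrow(2)$ of a prime filter strictly below $P$ out of the sole datum ``$b\in P$ and $b^{\bot}\subseteq P$'': this is the only place where distributivity of $L$ and the prime ideal theorem are genuinely needed, the rest being formal play with $^{\bot}$, $\vee$ and the filter axioms. Its counterpart in $(2)\Rightarrow(1)$ --- locating a prime filter inside $P$, i.e. verifying that $P$ is prime rather than merely that it is minimal among primes --- is the corresponding delicate point.
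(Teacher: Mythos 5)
First, a remark on the statement itself: the paper quotes this result from [Zhang2] without proof, and as printed the union in (2) is over $a\in P$, which cannot be right (it would contain $1^\bot=L$); your reading, with the union over $a\in L\setminus P$, is the intended one and is exactly how the paper later uses the theorem in the proof of Theorem 4.19. So the reinterpretation is fine; the problem lies elsewhere.

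The genuine gap is in $(1)\Rightarrow(2)$. In this paper a filter of an NM-algebra must be closed under $\odot$, not merely under $\wedge$, and ``minimal prime'' means minimal among prime filters in that sense. Your construction takes the lattice ideal $J$ generated by $(L\setminus P)\cup\{b\}$, extends it to a prime lattice ideal $J'$ by the distributive-lattice prime ideal theorem, and sets $Q=L\setminus J'$. That $Q$ is only a prime filter of the \emph{lattice reduct}: it need not be closed under $\odot$, hence need not be a prime filter of $L$ at all, and so its existence inside $P\setminus\{b\}$ does not contradict minimality of $P$. The failure is real, not hypothetical: in the standard NM-algebra on $[0,1]$ the set $[0.3,1]$ is the complement of a prime lattice ideal, yet $0.3\odot 0.3=0$, so it is not a filter of the NM-algebra. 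To repair the step you must separate with respect to NM-filters: by Zorn's lemma take a filter $Q$ of $L$ (in the $\odot$-closed sense) maximal among those disjoint from the $\vee$-closed set generated by $(L\setminus P)\cup\{b\}$ (this set avoids $1$ precisely because $b^\bot\subseteq P$), and prove that such a maximal $Q$ is prime using that $\odot$ distributes over $\vee$ and $(u\vee v)^{2n}\leq u^n\vee v^n$, equivalently $\langle Q\cup\{u\}\rangle_{}\cap\langle Q\cup\{v\}\rangle_{}=\langle Q\cup\{u\vee v\}\rangle_{}$. The lattice prime ideal theorem alone cannot deliver this; the residuated structure has to enter.

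There is a second, acknowledged gap in $(2)\Rightarrow(1)$: your observation that any prime filter $Q\subseteq P$ must equal $P$ is correct and yields minimality \emph{once $P$ is known to be prime}, but you never establish primeness of $P$. The route you sketch (separate a prime filter from $L\setminus P$) presupposes that $L\setminus P$ is closed under finite joins, which is exactly the primeness you are trying to prove, so as written the argument is circular. You should either give a direct argument that $P=\bigcup\{a^\bot\mid a\notin P\}$ forces $x\vee y\in P\Rightarrow x\in P$ or $y\in P$, or prove the theorem in the form ``a prime filter $P$ is minimal if and only if (2)'', which is the only direction and form the paper actually uses.
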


\begin{definition}\emph{\cite{Burris} An NM-algebra $L$ is called \emph{representable} if $L$ is isomorphic to a subdirect product
of linearly ordered NM-algebras.}
\end{definition}

\begin{theorem}\emph{\cite{Zhang2}} Let $L$ be an NM-algebra. Then the following statements are equivalent:
\begin{enumerate}[(1)]
  \item $L$ is representable,
  \item there exists a set $S$ of prime filters such that $\bigcap S=\{1\}$.
\end{enumerate}
\end{theorem}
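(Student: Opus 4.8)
The plan is to prove the two implications separately, relying on the filter--congruence correspondence recalled in the preliminaries: a filter $F$ gives the congruence $\equiv_F$, the quotient $L/F$ is an NM-algebra, and $F$ is a prime filter precisely when $L/F$ is linearly ordered. I will also use the basic fact that $x\rightarrow y=1$ is equivalent to $x\leq y$.

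For $(2)\Rightarrow(1)$, suppose $S$ is a set of prime filters with $\bigcap S=\{1\}$. For each $P\in S$ the quotient $L/P$ is a linearly ordered NM-algebra, so I would consider the map $\varphi\colon L\to\prod_{P\in S}L/P$ sending $x$ to the tuple $([x]_P)_{P\in S}$. Each coordinate of $\varphi$ is the canonical surjection $L\to L/P$, so $\varphi$ is an NM-algebra homomorphism and every projection $\pi_P\circ\varphi$ is onto; it remains only to check that $\varphi$ is injective. If $\varphi(x)=\varphi(y)$, then $x\equiv_P y$ for every $P\in S$, i.e. $x\rightarrow y\in P$ and $y\rightarrow x\in P$ for all $P$; hence $x\rightarrow y,\,y\rightarrow x\in\bigcap S=\{1\}$, so $x\leq y$ and $y\leq x$, giving $x=y$. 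Thus $\varphi$ is a subdirect embedding of $L$ into a product of chains, so $L$ is representable.

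For $(1)\Rightarrow(2)$, suppose $L$ is representable, so there is a subdirect embedding $\psi\colon L\hookrightarrow\prod_{i\in I}L_i$ with each $L_i$ a linearly ordered NM-algebra. For each $i$ the composite $\pi_i\circ\psi\colon L\to L_i$ is a surjective homomorphism; let $P_i=\{x\in L\mid(\pi_i\circ\psi)(x)=1\}$ be the filter corresponding to its kernel congruence, so that $L/P_i\cong L_i$ is linearly ordered and therefore $P_i$ is a prime filter. Since $\psi$ is injective, $x\in\bigcap_{i\in I}P_i$ iff $(\pi_i\circ\psi)(x)=1$ for all $i$ iff $\psi(x)=\psi(1)$ iff $x=1$; hence $\bigcap_{i\in I}P_i=\{1\}$, and $S=\{P_i\mid i\in I\}$ is the required family.

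I do not expect a genuine obstacle here: the argument is essentially routine once the filter--congruence dictionary is in place. The only points requiring a little care are the bookkeeping of that correspondence and checking that the maps really are subdirect embeddings, i.e. injectivity together with surjectivity of each projection; the substantive content has already been isolated in the preliminary facts that prime filters are exactly the kernels of homomorphisms onto chains and that $x\rightarrow y=1$ forces $x\leq y$.
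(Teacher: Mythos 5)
Your argument is correct. Note that the paper gives no proof of this statement at all: it is quoted as a known result from the cited reference [Zhang2], so there is nothing internal to compare against; your two implications are precisely the standard filter--congruence argument (prime filters are exactly the filters whose quotients are chains, and $x\rightarrow y=1$, $y\rightarrow x=1$ force $x=y$), which is the expected route. The only cosmetic point in $(1)\Rightarrow(2)$ is that a factor $L_i$ could in principle be trivial, in which case $P_i=L$ is not a proper (hence not a prime) filter; one simply discards such factors, which does not change the intersection.
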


\section{Monadic NM-algebras}
In this section, we introduce and investigate monadic NM-algebras. Then, we prove that monadic NM-algebras faithfully the axioms on quantifiers in monadic predicate NM logic. Finally, we discuss the relation between monadic NM-algebras and  some related structures, likeness monadic Boolean algebras, modal NM-algebras and rough approximation spaces.
\begin{definition} \emph{Let $L$ be an NM-algebra. A mapping $\forall:L\rightarrow L$ is called a \emph{universal quantifier} on $L$, such that the following conditions are satisfied:
\begin{enumerate}[(U1)]
  \item  $\forall x\rightarrow x=1$,
  \item $\forall (\neg x\rightarrow \forall y)=\neg \forall x \rightarrow \forall y$,
  \item $\forall(\forall x\rightarrow y)=\forall x\rightarrow \forall y$,
  \item $\forall(x\vee \forall y)=\forall x\vee \forall y$.
\end{enumerate}
for any $x,y,z\in L$.}
\end{definition}

\begin{definition}\emph{Let $L$ be an NM-algebra and $\forall$ be a universal quantifier on $L$. Then the couple $(L,\forall)$ is called a \emph{monadic NM-algebra}.}
\end{definition}

\begin{remark}  \emph{(1) If $L$ is an NM-algebra, then it is an involution De Morgan algebra. From \cite {Ch}, we obtain that the quantifiers $\forall$ and $\exists$ on NM-algebras are interdefinable. So, we can define a unary operation $\exists x=\neg\forall\neg x$ corresponding to the existential quantifier. Then in any monadic NM-algebra hold the identities which are dual to (U1)-(U4):
\begin{enumerate}[(E1)]
  \item $x\rightarrow \exists x=1$,
  \item $\exists (\neg x\odot \exists y)=\exists \neg x\odot \exists y$,
  \item $\exists(\neg \exists x\odot \neg y)=\neg\exists x\odot \exists \neg y$,
  \item $\exists(x\wedge \exists y)=\exists x\wedge \exists y$.
\end{enumerate}}

\emph{(2) From (1), one can see that there exists a one to one correspondence relation between existential and universal quantifiers on NM-algebras. For NM-algebras, we would like to study the monadic filters rather than monadic ideals, using the universal quantifier as the original one is more natural and convenient. Therefore, we introduce the notion of monadic NM-algebras are  NM-algebras equipped with universal quantifiers.}

\emph{(3) The axiomatisation above can be immediately translated into an equational one, so the class of monadic NM-algebras is a variety of algebras. Therefore, the notions of subalgebra and homomorphism are defined as usual.}
\end{remark}

Now, we present some examples for monadic NM-algebras.

\begin{example}\emph{ Let $L$ be an NM-algebra. One can check that the identity $id_L$ is not only a universal quantifier but also is an existential quantifier on $L$. Hence, $(L,id_L)$ is a monadic NM-algebra.}
\end{example}

\begin{example}\emph{Let $L=\{0,a,b,c,d,1\}$, where $0\leq a, b$; $a\leq c,d$; $b\leq c$; $c,d\leq 1$. Define operations $\rightarrow$ and $\odot$ as follows:}
\begin{center}
\begin{tabular}{c|c c c c c c}
   $\rightarrow$ & $0$ & $a$ & $b$ & $c$ & $d$ & $1$\\
   \hline
   $0$ & $1$ & $1$ & $1$ & $1$ & $1$ & $1$ \\
   $a$ & $c$ & $1$ & $c$ & $1$ & $1$ & $1$ \\
   $b$ & $d$ & $d$ & $1$ & $1$ & $d$ & $1$ \\
   $c$ & $a$ & $d$ & $c$ & $1$ & $d$ & $1$\\
   $d$ & $b$ & $c$ & $b$ & $c$ & $1$ & $1$ \\
   $1$ & $0$ & $a$ & $b$ & $c$ & $d$ & $1$
 \end{tabular} {\qquad}
\begin{tabular}{c|c c c c c c}
   $\odot$ & $0$ & $a$ & $b$ & $c$ & $d$ & $1$\\
   \hline
   $0$ & $0$ & $0$ & $0$ & $0$ & $0$ & $0$ \\
   $a$ & $0$ & $0$ & $0$ & $0$ & $0$ & $a$ \\
   $b$ & $0$ & $0$ & $b$ & $b$ & $0$ & $b$ \\
   $c$ & $0$ & $0$ & $b$ & $b$ & $a$ & $c$\\
   $d$ & $0$ & $0$ & $0$ & $0$ & $d$ & $d$ \\
   $1$ & $0$ & $a$ & $b$ & $c$ & $d$ & $1$
 \end{tabular}
\end{center}

\emph{Then  $(\{0,a,b,c,d,1\},\wedge,\vee,\odot,\rightarrow,0,1)$ is an NM-algebra. Now, we define $\forall$ as follows:
$\forall 0=\forall a=0$, $\forall b=\forall c=b$, $\forall d=d$, $\forall 1=1$. One can easily check that $(L,\forall)$ is a monadic NM-algebra.
From Remark 3.3(1), one can see that $\exists 0=0$, $\exists a=\exists d=d$, $\exists b=b$, $\exists c=\exists 1=1$ is an existential quantifier on $L$.
However, $\exists$ is not an existential quantifier on the corresponding residuated lattice $(L,\wedge,\vee,\rightarrow,\odot,0,1)$, since $\exists (a\odot a)=\exists 0=0\neq d=\exists a\odot \exists a$ not hold, which shows that the monadic NM-algebra is not the same as that of monadic bounded residuated lattice in \cite{Rachunek1}.}
\end{example}

\begin{example}\emph{ Let $L$ be a standard NM-algebra on $[0,1]$ and $L_n\subseteq L$ be a standard $n$-valued NM-algebra for some $n\geq 2$ (its elements are $0,\frac{1}{n-1},\cdots,\frac{n-2}{n-1},1)$. For any $x\in L$, we define $\forall x=\max\{y\in L_n|y\leq x\}$ and $\exists x=\min\{y\in L_n|x\leq y\}$. One can easily check that $(L,\forall)$ is a monadic NM-algebra.}
\end{example}

In the following, we will present some useful properties of universal quantifier on a monadic NM-algebra $(L,\forall)$.
\begin{proposition}Let $(L,\forall)$ be a monadic NM-algebra. Then the following properties hold: for any $x, y, z\in L$,
\begin{enumerate}[(1)]
  \item $\forall 0=0$,
  \item $\forall 1=1$,
  \item $\forall\forall x=\forall x$,
  \item $x\leq y$ implies $\forall x\leq \forall y$,
  \item $\forall(x\rightarrow y)\leq \forall x\rightarrow\forall y$, especially, $\forall \neg x\leq \neg \forall x$,
  \item $\forall x\leq y$ if and only if $\forall x\leq \forall y$,
  \item $\forall(\forall x\rightarrow \forall y)=\forall x\rightarrow \forall y$,
  \item $\forall\neg \forall x=\neg \forall x$,
  \item $\forall(x\wedge y)=\forall x\wedge \forall y$,
  \item $\forall(x\odot y)\geq\forall x\odot\forall y$,
  \item $\forall(\forall x\oplus \forall y)=\forall x\oplus \forall y$,
  \item $\forall(x\oplus y)\geq \forall x\oplus \forall y$,
  \item $\forall(\forall x\odot \forall y)=\forall x\odot \forall y$,
  \item $\forall L=L_{\forall}$, where $L_{\forall}=\{x\in L|\forall x=x\}$,
  \item $\forall L$ is a subalgebra of $L$.
\end{enumerate}
\end{proposition}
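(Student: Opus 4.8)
The plan is to prove the fifteen items in (essentially) the order in which they are listed, each one leaning on its predecessors; the four axioms (U1)--(U4), Proposition 2.2, and residuation are the only tools needed, and every item except one is routine bookkeeping.

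First I would dispatch (1)--(4), which together express that $\forall$ is an interior operator. Item (1): $\forall 0\le 0$ by (U1), so $\forall 0=0$. Item (2): instantiate (U3) at $x=0$ and use (1), $\forall 1=\forall(\forall 0\to y)=\forall 0\to\forall y=1$. Item (3): instantiate (U4) at $x=0$ and use (1), $\forall\forall y=\forall(0\vee\forall y)=\forall 0\vee\forall y=\forall y$. Item (4): if $x\le y$ then $\forall x\le x\le y$ by (U1), so $\forall x\to y=1$ by Proposition 2.2(1), hence $\forall x\to\forall y=\forall(\forall x\to y)=\forall 1=1$ by (U3) and (2).

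Items (5)--(8) and (10)--(13) then follow by combining these with residuation and Proposition 2.2. For (5), $\forall x\le x$ gives $x\to y\le\forall x\to y$ by Proposition 2.2(3), so $\forall(x\to y)\le\forall(\forall x\to y)=\forall x\to\forall y$ by (4) and (U3); putting $y=0$ and using (1) yields $\forall\neg x\le\neg\forall x$. Item (6): ``$\Leftarrow$'' is (U1), ``$\Rightarrow$'' is (4) followed by (3). Item (7) is (U3) with $y$ replaced by $\forall y$ together with (3); item (8) is (7) at $y=0$ with (1). For (10), from $x\odot y\le x\odot y$ and residuation we get $x\le y\to(x\odot y)$, then $\forall y\le y$ and Proposition 2.2(3) give $x\le\forall y\to(x\odot y)$, so by (4) and (U3) $\forall x\le\forall(\forall y\to(x\odot y))=\forall y\to\forall(x\odot y)$, i.e. $\forall x\odot\forall y\le\forall(x\odot y)$; item (13) is then (10) applied to $\forall x,\forall y$ together with (3) and (U1). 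Item (11): rewrite $\forall x\oplus\forall y=\neg\forall x\to\forall y$, note $\neg\forall x=\forall\neg\forall x$ by (8), and apply (U3) and (3); item (12) follows from (11), monotonicity of $\oplus$, (U1) and (4).

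The one genuinely nontrivial item is (9), $\forall(x\wedge y)=\forall x\wedge\forall y$: here ``$\le$'' is immediate from (4), and for ``$\ge$'' the plan is as follows. By Proposition 2.2(9) and (U1), $\forall x\to(x\wedge y)=(\forall x\to x)\wedge(\forall x\to y)=\forall x\to y$; applying $\forall$ and invoking (U3) on both sides gives $\forall x\to\forall y=\forall(\forall x\to y)=\forall(\forall x\to(x\wedge y))=\forall x\to\forall(x\wedge y)$, and symmetrically $\forall y\to\forall x=\forall y\to\forall(x\wedge y)$. Then by Proposition 2.2(12) and the prelinearity axiom $(a\to b)\vee(b\to a)=1$ of Definition 2.1, $(\forall x\wedge\forall y)\to\forall(x\wedge y)=(\forall x\to\forall(x\wedge y))\vee(\forall y\to\forall(x\wedge y))=(\forall x\to\forall y)\vee(\forall y\to\forall x)=1$, hence $\forall x\wedge\forall y\le\forall(x\wedge y)$. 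I expect this last passage — upgrading the two residual identities to the meet inequality via prelinearity — to be the main obstacle, since it is the one place where the specific NM-algebra structure, rather than generic interior-operator behaviour, is really used. Finally, (14) is immediate from (3): $y\in\forall L$ means $y=\forall x$ for some $x$, whence $\forall y=\forall\forall x=\forall x=y$, and conversely $\forall y=y$ gives $y\in\forall L$; and (15) follows since $L_\forall$ contains $0,1$ by (1)--(2) and is closed under $\wedge$ by (9), under $\to$ by (7), under $\odot$ by (13), and under $\vee$ because $\forall(x\vee y)=\forall(x\vee\forall y)=\forall x\vee\forall y=x\vee y$ for $x,y\in L_\forall$ by (U4).
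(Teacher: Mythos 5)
Your proof is correct and follows essentially the same route as the paper: items (1)--(8), (10), (12), (14), (15) are argued just as in the text, and your treatment of (9) (direct use of Proposition 2.2(9),(12) with prelinearity) is the same squeeze in slightly different packaging. The only small divergences are that you obtain (11) from (8) and (U3) instead of invoking (U2), and (13) from (10), (3) and (U1) instead of (7) and (8); both shortcuts are valid and equivalent in substance.
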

\begin{proof}\begin{enumerate}[(1)]
               \item Applying (U1), we have $\forall 0 \leq 0$. Thus, $\forall 0=0$.

               \item From (U1) and (U3), we have $\forall 1=\forall(\forall x\rightarrow x)=\forall x\rightarrow \forall x=1$.
               \item From (U4) and (1), we deduce that $\forall\forall x=\forall(0\vee \forall x)=\forall 0\vee \forall x=0\vee \forall x=\forall x$.
               \item If $x\leq y$, then $x\rightarrow y=1$. It follows from (U3),(2) and Proposition 2.2(1) that $1=\forall (1)=\forall(\forall x\rightarrow y)=\forall x\rightarrow \forall y$. Thus, $\forall x\leq \forall y$.
               \item From (U1) and Proposition 2.2(4), we get $x\rightarrow y\leq \forall x\rightarrow y$. Further by (U3) and (4), we have $\forall(x\rightarrow y)\leq \forall(\forall x\rightarrow y)=\forall x\rightarrow \forall y$.
               \item It follows from (U1), (3) and (4).
               \item From (U3) and (3), we deduce that $\forall(\forall x\rightarrow \forall y)=\forall x\rightarrow \forall\forall y=\forall x\rightarrow \forall y$.
               \item From (U3) and (1), we have $\forall\neg \forall x=\forall(\forall x\rightarrow 0)=\forall x\rightarrow \forall 0=\forall x\rightarrow 0=\neg\forall x$.
               \item From (U3) and Proposition 2.2 (9),(10), we have $(\forall x\wedge \forall y)\rightarrow \forall(\forall x\wedge \forall y)=(\forall x\rightarrow \forall(\forall x\wedge \forall y))\vee (\forall y\rightarrow \forall(\forall x\wedge \forall y))=\forall(\forall x\rightarrow (\forall x\wedge \forall y)\vee \forall(\forall y\rightarrow (\forall x\wedge \forall y)=\forall(\forall x\rightarrow \forall y)\vee \forall(\forall y\rightarrow \forall x)=(\forall x\rightarrow \forall y)\vee (\forall y\rightarrow \forall x)=1$. So from Proposition 2.2(1),(U1) and (4), we obtain $\forall x\wedge \forall y\leq \forall(\forall x\wedge \forall y)\leq \forall(x\wedge y)\leq \forall x\wedge \forall y$. Thus, $\forall(x\wedge y)=\forall x\wedge \forall y$.
               \item From $x\odot y\leq x\odot y$, we get $y\leq x\rightarrow(x\odot y)$. Applying (4),(5), we get $\forall y\leq \forall x\rightarrow \forall(x\odot y)$. Thus, $\forall(x)\odot \forall(y)\leq \forall(x\odot y)$ by Definition 2.1(3).
               \item From (U2), we get $\forall(x\oplus \forall y)=\forall x\oplus \forall y$. Further by (3), we have $\forall(\forall x\oplus \forall y)=\forall\forall x\oplus \forall y=\forall x\oplus \forall y$. Thus, $\forall(\forall x\oplus \forall y)=\forall x\oplus \forall y$.
               \item From (U1) and (11), we have $\forall x\oplus \forall y=\forall(\forall x\oplus \forall y)\leq \forall(x\oplus y)$.
               \item It follows from (7) and (8).
               \item Let $y\in \forall L$. So there exists $x\in L$ such that $y=\forall x$. Hence $\forall y=\forall\forall x=\forall x=y$. It follows that $y\in L_{\forall}$. Conversely, if $y\in L_{\forall}$, then we have $y\in\forall L$. Therefore, $\forall L=L_{\forall}$.
               \item (9),(U4) imply that $\wedge$ and $\vee$ are preserved. (7), (13) imply that $\rightarrow$ and $\odot$ are preserved. (1),(2) imply that $0,1\in \forall L$. (3) implies that $\forall$ is preserved. Therefore, $\forall L$ is a subalgebra of $L$.
             \end{enumerate}
\end{proof}

   Now, we turn our attention to some properties of existential quantifier on a monadic NM-algebra $(L,\forall)$.

\begin{proposition}Let $(L,\forall)$ be a monadic NM-algebra. Then the following properties hold: for any $x, y, z\in L$,
\begin{enumerate}[(1)]
  \item $\exists 0=0$,
  \item $\exists 1=1$,
  \item $\exists\exists x=\exists x$,
  \item $x\leq y$ implies $\exists x\leq \exists y$,
  \item $\exists(\exists x\odot \exists y)=\exists x\odot \exists y$,
  \item $\exists\neg \exists x=\neg \exists x$,
  \item $\neg \exists x\leq \exists \neg x$,
  \item $\exists(x\vee y)=\exists x\vee \exists y$,
  \item $x\leq \exists y$ if and only if  $\exists x\leq \exists y$,
  \item $\forall\exists x=\exists x$,
  \item $\exists\forall x=\forall x$,

  \item $\forall x=x$ if and only if $\exists x=x$,
  \item $\exists L=L_{\exists}$, where $L_{\exists}=\{x\in L|\exists x=x\}$,
  \item $\exists L=\forall L$,
  \item $(\exists,\forall)$ establishes a Galois connection over $(L,\leq)$,
  \item $\forall(\exists x\rightarrow \exists y)=\exists x\rightarrow \exists y$,
   \item $\exists(\exists x\oplus\exists y)=\exists x\oplus \exists y$.
\end{enumerate}
\end{proposition}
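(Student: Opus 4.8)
The plan is to derive every item of the proposition by dualising the corresponding item of Proposition~3.7 through the definition $\exists x=\neg\forall\neg x$ together with the involutive De~Morgan identities available in any NM-algebra ($\neg\neg x=x$, $\neg(x\wedge y)=\neg x\vee\neg y$, $\neg(x\odot y)=\neg x\oplus\neg y$, $x\to y=\neg y\to\neg x$). The whole argument pivots on first identifying the range of $\exists$ with the range of $\forall$, so I would settle the ``structural'' items (3), (13), (14), (12) before anything that depends on fixed points.

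For the structural core: item (3) is the direct computation $\exists\exists x=\neg\forall(\neg\neg\forall\neg x)=\neg\forall\forall\neg x=\neg\forall\neg x=\exists x$ using Proposition~3.7(3). For (14), Proposition~3.7(8) says $\neg\forall z=\forall\neg\forall z$ for every $z$, so $\neg\forall z$ is a fixed point of $\forall$; taking $z=\neg x$ gives $\exists x\in L_\forall=\forall L$, hence $\exists L\subseteq\forall L$. Conversely, if $a\in\forall L=L_\forall$ (Proposition~3.7(14)) then $\forall a=a$ and, $\forall L$ being a subalgebra (Proposition~3.7(15)), $\neg a\in\forall L$, so $\forall\neg a=\neg a$ and $\exists a=\neg\forall\neg a=\neg\neg a=a$; thus $a\in\exists L$. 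This proves $\exists L=\forall L$, and together with (3) the standard fact that an idempotent map has range equal to its fixed-point set gives (13) $\exists L=L_\exists$. Item (12) then follows since $\forall x=x\iff x\in\forall L=\exists L\iff\exists x=x$. Write $M:=\forall L=\exists L=L_\forall=L_\exists$; by Proposition~3.7(15) it is a subalgebra of $L$, in particular closed under $\neg,\to,\odot,\oplus$.

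Once $M$ is available, every fixed-point identity is immediate: since $\exists x,\exists y\in M$ and $M$ is a subalgebra, each of $\exists x\odot\exists y$, $\neg\exists x$, $\exists x\to\exists y$, $\exists x\oplus\exists y$ and $\forall x$ again lies in $M=L_\forall=L_\exists$, hence is fixed by both $\forall$ and $\exists$; this yields (5), (6), (10), (11), (16), (17) at once. The boundary and monotonicity items are one-liners: (1) $\exists 0=\neg\forall 1=\neg 1=0$ and (2) $\exists 1=\neg\forall 0=\neg 0=1$ by Proposition~3.7(1),(2); (4) follows from Proposition~3.7(4) applied to $\neg y\le\neg x$; (8) is $\exists(x\vee y)=\neg\forall(\neg x\wedge\neg y)=\neg(\forall\neg x\wedge\forall\neg y)=\exists x\vee\exists y$ via Proposition~3.7(9). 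For (7) I would chain $\neg\exists x=\forall\neg x\le\neg x\le\neg\forall x=\exists\neg x$, the two inequalities being instances of (U1). For (9), the implication $x\le\exists y\Rightarrow\exists x\le\exists y$ uses monotonicity (4) and idempotence (3), while the converse uses $a\le\exists a$ (itself immediate from $\forall\neg a\le\neg a$).

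Finally, for (15) I would read the Galois connection as $\exists x\le y\iff x\le\forall y$: applying $\forall$ to $\exists x\le y$ and using (10) gives $\exists x=\forall\exists x\le\forall y$, whence $x\le\exists x\le\forall y$; applying $\exists$ to $x\le\forall y$ and using (11) gives $\exists x\le\exists\forall y=\forall y\le y$. I do not expect a genuine obstacle here; the only point requiring care is the order of deduction, namely that the range identification $\exists L=\forall L$ (and hence the subalgebra $M$) must be in place before the fixed-point consequences are invoked. After that, the remaining work is routine dualisation of Proposition~3.7.
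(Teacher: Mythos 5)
Your proposal is correct and follows exactly the route the paper intends: the paper's proof simply states that the items are dual to Proposition 3.7 via $\exists x=\neg\forall\neg x$ and omits the details, and your dualisation (including the preliminary identification $\exists L=\forall L=L_\forall=L_\exists$ and the use of the subalgebra property to dispatch the fixed-point items) fills in precisely that omitted argument. No gaps; the De Morgan and involution identities you invoke are available in any NM-algebra (Proposition 2.2(10),(12) and $\neg\neg x=x$), so every step goes through.
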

\begin{proof} The proofs are dual to that of Proposition 3.7, so we omit them.
\end{proof}

In the following, we will show that the converses of Proposition 3.7 (12), Proposition 3.8 (7) are not true, in general.

\begin{example}\emph{(1) Consider the monadic NM-algebra $(L,\forall)$ in Example 3.5, we have $\forall(c\oplus c)=\forall 1=1\nleq b=\forall c\oplus \forall c$. Therefore, the converse of Proposition 3.7 (12) is not true, in general.}

\emph{(2) Let $L=[0,1]$. For any $x,y\in L$, we define $x\wedge y=\min\{x,y\}$, $x\vee y=\max\{x,y\}$, $x\odot y=0$ if $x\leq y\rightarrow 0$; otherwise $x\odot y=x\wedge y$ and $x\rightarrow y=1$ if $x\leq y$; otherwise $x\rightarrow y=\neg x\vee y$. Then $([0,1],\wedge,\vee,\rightarrow,\odot,0,1)$ is an NM-algebra. Now we define $\forall$ and $\exists$ as follows:}
\begin{center}
$\forall x=
\begin{cases}
1, & x=1 \\
0, & x\neq 1
\end{cases}$
{\qquad}
$\exists x=
\begin{cases}
0, & x=0 \\
1, & x\neq 0
\end{cases}$
\end{center}

\emph{One can easily check that $(L,\forall)$ is a monadic NM-algebra. Moreover, we have $\exists \neg\frac{1}{2}=1\nleq 0=\neg \exists \frac{1}{2}$. Therefore, the converse of Proposition 3.8(7) is not true, in general.}
\end{example}

In the following, we show that monadic NM-algebras faithfully the axioms on quantifiers in monadic predicate NM logic, which was introduced in \cite{Bianchi}.

\begin{theorem} Let $L$ be an NM-algebra, $\forall:L\longrightarrow L$ and $\exists:L\longrightarrow L$ be two mappings. Then the sets of $G=\{U1,U2,U3,U4\}$ and $H=\{W1,W2,W3,W4,W5\}$ are equivalent, where $W1-W5$ defined as follows: for any $x,y\in L$,
\begin{enumerate}[(W1)]
  \item $\forall x\rightarrow x=1$,
  \item $x\rightarrow\exists x=1$,
  \item $\forall(x\rightarrow \exists y)=\exists x\rightarrow \exists y$,
  \item $\forall(\exists x\rightarrow y)=\exists x\rightarrow \forall y$,
  \item $\forall(x\vee \exists y)=\forall x\vee \exists y$.
\end{enumerate}
\end{theorem}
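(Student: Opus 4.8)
The plan is to treat, following Remark 3.3(1), the operation $\exists$ as the term $\exists x=\neg\forall\neg x$ (equivalently $\forall x=\neg\exists\neg x$, using $\neg\neg a=a$); then both $G$ and $H$ become conditions on the single operation $\forall$, and it suffices to prove the two implications $G\Rightarrow H$ and $H\Rightarrow G$. The two pieces of NM-algebra arithmetic that do all the work are involutivity $\neg\neg a=a$ and the contrapositive law $\neg a\rightarrow\neg b=b\rightarrow a$ (equivalently $a\rightarrow b=\neg b\rightarrow\neg a$); together with the derived identities $\forall\neg x=\neg\exists x$ and $\exists\neg x=\neg\forall x$ these convert any statement about $\forall$ into one about $\exists$ and back. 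The decisive remark is that the formula substitution $x\mapsto\neg x$, $y\mapsto\neg y$ is an involution, so each pairing $U\leftrightarrow W$ below is a genuine two-way translation rather than a one-way derivation.

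For $G\Rightarrow H$: axiom (W1) is literally (U1). For (W2) apply (U1) to $\neg x$, getting $\forall\neg x\leq\neg x$, and take $\neg$ of both sides to obtain $x\leq\neg\forall\neg x=\exists x$, i.e. $x\rightarrow\exists x=1$. For (W3) substitute $x\mapsto\neg x$ and $y\mapsto\neg y$ in (U3), rewrite $\forall\neg x=\neg\exists x$ and $\forall\neg y=\neg\exists y$, and simplify $\neg\exists x\rightarrow\neg y=y\rightarrow\exists x$ and $\neg\exists x\rightarrow\neg\exists y=\exists y\rightarrow\exists x$ by the contrapositive law; after renaming $x\leftrightarrow y$ this is (W3). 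For (W4) perform the substitution $y\mapsto\neg y$ in (U2) and simplify in the same way; after renaming this is (W4). For (W5) note first that, by Proposition 3.7(8), $\forall(\exists y)=\forall(\neg\forall\neg y)=\neg\forall\neg y=\exists y$, so $\exists y\in\forall L$; hence $\forall(x\vee\exists y)=\forall(x\vee\forall(\exists y))=\forall x\vee\forall(\exists y)=\forall x\vee\exists y$ by (U4).

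For $H\Rightarrow G$: axiom (U1) is (W1). Since the substitutions used above are involutions, running them backwards recovers (U3) from (W3) and (U2) from (W4). The remaining implication, (U4), is the one requiring care: here Propositions 3.7--3.8 may not be cited, since they presuppose $G$, so the needed fixpoint facts must be bootstrapped from $H$ alone. From (W1) one gets $\forall x\leq x$ and in particular $\forall 0=0$; setting $x=1$ in (W3), together with $\exists 1=1$ (immediate from (W2)), gives $\forall\exists y=\exists y$; setting $y=0$ in (W4) gives $\forall\neg\exists x=\neg\exists x$, which after rewriting through $\exists=\neg\forall\neg$ reads $\forall\forall z=\forall z$ for all $z$, and combined with $\neg\forall y=\exists\neg y$ and $\forall\exists=\exists$ yields $\forall\neg\forall y=\exists\neg y=\neg\forall y$, hence $\exists\forall y=\neg\forall\neg\forall y=\neg\neg\forall y=\forall y$. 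Thus $\forall y\in\exists L$, and applying (W5) with $\forall y$ (which equals $\exists(\forall y)$) in place of $\exists y$ gives $\forall(x\vee\forall y)=\forall x\vee\forall y$, which is (U4).

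The main obstacle is the pair (W5)$\leftrightarrow$(U4). Unlike (W1)--(W4), it is not a pure De Morgan mirror of the $U$-axioms: the translation rests on the identity $\forall L=\exists L$ and the fixpoint relations $\forall\exists=\exists$, $\exists\forall=\forall$. On the $G$-side these are handed to us by Proposition 3.7, but on the $H$-side they have to be re-established from $H$, and checking that this bootstrap actually closes up (in the order $\forall 0=0$, then $\forall\exists=\exists$, then $\forall\forall=\forall$ and $\forall\neg\forall=\neg\forall$, then $\exists\forall=\forall$) is the one genuinely non-mechanical step. Everything else is routine rewriting in $L$ with $\neg\neg a=a$ and the contrapositive law.
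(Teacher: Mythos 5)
Your proof is correct, and it differs from the paper's in two instructive ways. First, the matching of axioms: the paper pairs (U2)$\leftrightarrow$(W3) and (U3)$\leftrightarrow$(W4), passing through the fixpoint identities $\forall\exists x=\exists x$ and $\exists\forall x=\forall x$, whereas you pair (U3)$\leftrightarrow$(W3) and (U2)$\leftrightarrow$(W4) via the substitution $x\mapsto\neg x$, $y\mapsto\neg y$ together with contraposition $\neg a\rightarrow\neg b=b\rightarrow a$; both routes are valid (I checked your backward computations, e.g.\ from (W4) one gets $\forall(\neg\forall y\rightarrow x)=\neg\forall y\rightarrow\forall x$, and two contrapositions turn this into (U2)), and your substitution argument has the advantage of being manifestly reversible, so (U2),(U3),(W3),(W4) are handled uniformly. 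Second, and more substantively, in the direction $H\Rightarrow G$ the paper simply asserts ``from W1--W5 we have $\neg\forall x=\exists\neg x$, $\forall\exists x=\exists x$ and $\exists\forall x=\forall x$'' without proof, while you actually carry out the bootstrap ($\forall 0=0$ from (W1); $\forall\exists y=\exists y$ from (W3) at $x=1$ with $\exists 1=1$; $\forall\neg\exists x=\neg\exists x$ from (W4) at $y=0$; then $\forall\neg\forall y=\neg\forall y$ and $\exists\forall y=\forall y$), and you correctly note that Propositions 3.7--3.8 may not be invoked here since they presuppose $G$; this fills a genuine gap in the paper's write-up. One caveat worth recording: you read $\exists$ throughout as the abbreviation $\neg\forall\neg$ (Remark 3.3(1)), so the identity $\neg\forall x=\exists\neg x$ is definitional for you, whereas the theorem's wording treats $\exists$ as a second primitive map; under that literal reading $H$ does not in fact force $\exists=\neg\forall\neg$ (e.g.\ $\forall=\mathrm{id}_L$ and $\exists\equiv 1$ satisfy (W1)--(W5)), so the definitional convention is needed anyway for $G\Rightarrow H$ to make sense, and your reading agrees with how the paper actually uses the theorem. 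The part you reuse unchanged from the paper is the treatment of (W5)/(U4) via $\forall\exists y=\exists y$ and $\exists\forall y=\forall y$, which is indeed the only place where the translation is not a pure De Morgan mirror.
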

\begin{proof} $G\Rightarrow H$: From Remark 3.3(1), we have $\exists x=\neg\forall\neg x$.
                              \begin{enumerate}[(W1)]
                                  \item It follows from (U1).
                                  \item Applying (U1), we have $\forall \neg x\leq \neg x$ and hence $\neg\forall \neg x\geq x$. Therefore, $x\leq \exists x$.
                                  \item From (U2), we have $\forall(x\rightarrow \forall y)=\forall(\neg\neg x\rightarrow \forall y)=\neg\forall \neg x\rightarrow \forall y=\exists x\rightarrow \forall y$.
                                  \item From Proposition 3.8(10), we have $\forall\exists x=\exists x$, for any $x\in L$. Moreover, by (U3), one can obtain that $\forall(\exists x\rightarrow y)=\forall(\forall\exists x\rightarrow y)=\forall\exists x\rightarrow y=\exists x\rightarrow\forall y$.
                                  \item From Proposition 3.8(10), we have $\forall\exists x=\exists x$, for any $x\in L$. Moreover, by (U4), one can obtain that $\forall( x\vee \exists y)=\forall(x\vee \forall\exists y)=\forall x\vee \forall\exists y=\forall x\vee\exists y$.
                                \end{enumerate}
             $H\Rightarrow G$: From $W1,W2,W3,W4,W5$, we have $\neg \forall x=\exists \neg x$, $\forall\exists x=\exists x$ and $\exists\forall x=\forall x$.
             \begin{enumerate}[(U1)]
               \item It follows from (W1).
               \item From (W3), we have $\forall (\neg x\rightarrow \forall y)=\forall(\neg x\rightarrow \exists\forall y)=\exists \neg x\rightarrow \exists\forall y=\neg\forall x\rightarrow \forall y$.
               \item From (W4), we have $\forall(\forall x\rightarrow y)=\forall(\exists\forall x\rightarrow y)=\exists\forall x\rightarrow \forall y=\forall x\rightarrow \forall y$.
               \item From (W5), we have $\forall(x\vee \forall y)=\forall(x\vee \exists\forall y)=\forall x\vee \exists\forall y=\forall x\vee \forall y$.
             \end{enumerate}
\end{proof}

In the following, we will give some conditions under which
monadic NM-algebras become monadic Boolean algebras, which were
introduced by Halmos as a pair $(L,\exists)$ satisfying conditions
(E1),(E4) and Proposition 3.8(1).

\begin{theorem} Let $(L,\forall)$ be a monadic NM-algebra. Then the following statements are equivalent:
\begin{enumerate}[(1)]
  \item $(L,\exists)$ is a monadic Boolean algebra,
  \item every universal quantifier $\forall$ on $L$ satisfies $\forall(x\wedge y)=\forall x\odot \forall y$ for any $x,y\in L$,
  \item every universal quantifier $\forall$ on $L$ satisfies $\forall(x\vee y)=\forall x\oplus \forall y$ for any $x,y\in L$.
\end{enumerate}
\end{theorem}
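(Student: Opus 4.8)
The plan is to establish the two equivalences $(1)\Leftrightarrow(2)$ and $(1)\Leftrightarrow(3)$, and the first move is to unwind (1). In any monadic NM-algebra the dual operator $\exists=\neg\forall\neg$ automatically satisfies $\exists 0=0$ (Proposition 3.8(1)) together with $x\leq\exists x$ and $\exists(x\wedge\exists y)=\exists x\wedge\exists y$, i.e.\ (E1) and (E4) of Remark 3.3(1); these are exactly the Halmos axioms for an existential quantifier, so $(L,\exists)$ is a monadic Boolean algebra if and only if the underlying algebra $L$ is a Boolean algebra. Consequently I would everywhere replace (1) by ``$L$ is a Boolean algebra'' and then invoke Theorem 2.3, which identifies this with $x\odot y=x\wedge y$ for all $x,y$, and equally with $x\oplus y=x\vee y$ for all $x,y$.

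For $(1)\Rightarrow(2)$: if $L$ is Boolean then $\odot=\wedge$, so for any universal quantifier $\forall$ on $L$ Proposition 3.7(9) gives $\forall(x\wedge y)=\forall x\wedge\forall y=\forall x\odot\forall y$. For $(2)\Rightarrow(1)$: by Example 3.4 the identity map $id_L$ is a universal quantifier, and evaluating the identity of (2) at $id_L$ yields $x\wedge y=x\odot y$ for all $x,y$, so $L$ is Boolean by Theorem 2.3. Symmetrically, $(3)\Rightarrow(1)$ follows by evaluating the identity of (3) at $id_L$, which gives $x\vee y=x\oplus y$ and hence $L$ Boolean.

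This leaves $(1)\Rightarrow(3)$, which is the only direction carrying real content. With $L$ Boolean we have $\oplus=\vee$, so what has to be proved is that every universal quantifier on a Boolean NM-algebra preserves joins, $\forall(x\vee y)=\forall x\vee\forall y$. One inequality, $\forall x\vee\forall y\leq\forall(x\vee y)$, is just monotonicity (Proposition 3.7(4)). For the converse I would pass to complements, writing $x\vee y=\neg(\neg x\wedge\neg y)$, and combine Proposition 3.7(8) ($\forall\neg\forall z=\neg\forall z$), Proposition 3.7(9) applied to the meet $\neg x\wedge\neg y$, and axiom (U4) (which only controls joins of the stabilised form $\forall(x\vee\forall y)$); the complements available in a Boolean algebra are what allow an arbitrary join to be rewritten in a shape to which (U4) applies.

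The step I expect to be the main obstacle is exactly this last reduction. Meets are effortless because Proposition 3.7(9) gives $\forall(x\wedge y)=\forall x\wedge\forall y$ outright, whereas joins are only reachable through (U4) in the restricted form $\forall(x\vee\forall y)=\forall x\vee\forall y$, so the Boolean hypothesis has to be used essentially --- through complementation --- to bridge that gap. Everything else is a direct application of Theorem 2.3 together with the fact that $id_L$ is a bona fide universal quantifier.
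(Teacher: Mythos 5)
Your three easy directions coincide with the paper's own argument: $(1)\Rightarrow(2)$ via Theorem 2.3 and Proposition 3.7(9), and $(2)\Rightarrow(1)$, $(3)\Rightarrow(1)$ by specialising to the identity quantifier $id_L$ of Example 3.4. The genuine gap is exactly where you flagged it: your $(1)\Rightarrow(3)$ is only a sketch, and the sketch cannot be completed. Writing $x\vee y=\neg(\neg x\wedge\neg y)$ and applying Proposition 3.7(9) to the meet $\neg x\wedge\neg y$ only yields $\forall(x\vee y)=\forall\neg(\neg x\wedge\neg y)\leq\neg\forall(\neg x\wedge\neg y)=\neg\forall\neg x\vee\neg\forall\neg y=\exists x\vee\exists y$, because $\forall\neg z\leq\neg\forall z$ (Proposition 3.7(5)) is a one-way inequality whose converse fails in general; pushing the negation through converts $\forall$ into $\exists$, not back into $\forall$, and (U4) never becomes applicable since nothing of the form $\forall x$ or $\forall y$ appears under the outer $\forall$. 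In fact no argument can close this direction as stated: on the four-element Boolean algebra $L=\{0,a,\neg a,1\}$ (a Boolean algebra is an NM-algebra) the map given by $\forall 1=1$ and $\forall x=0$ for $x\neq 1$ satisfies (U1)--(U4) --- it is the Boolean analogue of the quantifier in Example 3.9(2) --- and $(L,\exists)$ is then a monadic Boolean algebra, yet $\forall(a\vee\neg a)=\forall 1=1$ while $\forall a\oplus\forall\neg a=0\oplus 0=0$. So condition (3) fails while (1) holds; only the inequality $\forall x\oplus\forall y\leq\forall(x\vee y)$ (your monotonicity half) is actually available.

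You should also know that the paper's own proof of $(1)\Rightarrow(3)$ does not supply the missing ingredient: it claims $\forall(x\vee y)=\forall(x\oplus y)\leq\forall x\oplus\forall y$ ``by Proposition 3.7(12)'', but 3.7(12) asserts the opposite inequality $\forall(x\oplus y)\geq\forall x\oplus\forall y$, and the paper's Example 3.9(1) explicitly shows the $\leq$ version fails in general. So the obstacle you identified is real rather than a missing lemma; the equivalence with (3) would only go through if (3) were weakened to the inequality $\forall x\oplus\forall y\leq\forall(x\vee y)$, or if one restricted attention to strong quantifiers satisfying (U4$'$), for which $\forall(x\vee y)=\forall x\vee\forall y$ does hold and the Boolean identity $\vee=\oplus$ finishes the argument (note $id_L$ is strong, so the converse direction survives this restriction).
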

\begin{proof}$(1)\Rightarrow(2)$ Suppose that $L$ is a Boolean algebra and $\forall$ is any quantifier on $L$. Then $L$ satisfies the property  $x\odot y = x\wedge y$ for any $x,y\in L$. Applying Proposition 3.7(9), we have $\forall(x\wedge y)=\forall x\wedge \forall y=\forall x\odot \forall y$. Therefore, we have $\forall(x\wedge y)=\forall x\odot \forall y$.

$(2)\Rightarrow(1)$ Assume that  every universal quantifier $\forall$ on $L$ satisfies $\forall(x\wedge y)=\forall x\odot \forall y$ for any $x,y\in L$. From Example 3.4, we know that $id_L$ is an universal quantifier on $L$. Thus, taking $\forall=id_L$, we have $x\odot y=x\wedge y$ for all $x,y\in L$.  Therefore, $L$ is a Boolean algebra and hence $(L,\exists)$ is a monadic Boolean algebra.

$(1)\Rightarrow(3)$  Suppose that $L$ is a Boolean algebra
and $\forall$ is any quantifier on $L$.
Then $L$ satisfies the property $x\oplus y = x\vee y$ for any $x,y\in L$. From Proposition 3.7(12), we have $\forall(x\vee y) = \forall(x\oplus y)\leq \forall x \oplus \forall y$. Further by Proposition 3.7(4), we have $\forall(x\vee y)\geq \forall x\vee \forall y=\forall x\oplus \forall y$. Thus, $\forall(x\vee y)= \forall x\oplus \forall y$.

$(3)\Rightarrow(1)$ Assume that  every universal quantifier $\forall$ on $L$ satisfies $\forall(x\vee y)=\forall x\oplus \forall y$ for any $x,y\in L$. Taking $\forall=id_L$, we have $x\vee y=x\oplus y$ for all $x,y\in L$. Therefore, $L$ is a Boolean algebra and hence $(L,\exists)$ is a monadic Boolean algebra.
 \end{proof}

It is interesting to note that for linearly ordered monadic NM-algebra, we have $\forall(x\vee y)=\forall x\vee\forall y$ . So for these subvarieties the axiom (U4) can be rewritten in the form:
\begin{center}
 (U4')  $\forall(x\vee y)=\forall x\vee \forall y$.
 \end{center}

  Motivated by the above consideration, we introduce a special kind of universal quantifier under the name of strong universal quantifier on an NM-algebra.

\begin{definition}\emph{A \emph{strong universal quantifier} on an NM-algebra $L$ is a mapping $\forall:L\rightarrow L$ satisfying (U1),(U2),(U3) and (U4'). The couple $(L,\forall)$ is called a \emph{strong monadic NM-algebra}.}
\end{definition}

\begin{example}\emph{ Consider the monadic NM-algebra $(L,\forall)$ in Example 3.9(2), one can check that it is also a strong monadic NM-algebra. However, the strong universal quantifier $\forall$ is not a homomorphism on $L$, since $\forall(\frac{1}{2}\rightarrow 0)=0\neq 1= \forall \frac{1}{2}\rightarrow \forall 0$.}
\end{example}

\begin{proposition} Every strong monadic NM-algebra is a monadic NM-algebra.
\end{proposition}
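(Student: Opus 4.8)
The plan is as follows. Comparing Definition 3.16 with Definition 3.1, the axioms (U1), (U2), (U3) are literally shared by strong monadic NM-algebras and monadic NM-algebras, so the whole task reduces to deriving (U4), namely $\forall(x\vee \forall y)=\forall x\vee \forall y$, from (U1), (U2), (U3) together with (U4'). The crucial observation is that (U4) is an immediate consequence of (U4') once we know the idempotency law $\forall\forall y=\forall y$: indeed, substituting $\forall y$ for $y$ in (U4') gives $\forall(x\vee \forall y)=\forall x\vee \forall\forall y=\forall x\vee \forall y$. So the real content is to prove $\forall\forall y=\forall y$ without appealing to (U4) (and hence without circularly invoking Proposition 3.7, whose proof of item (3) uses (U4)).

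To establish $\forall\forall y=\forall y$ I would first recover, using only (U1) and (U3), the auxiliary facts that $\forall 1=1$ and that $\forall$ is monotone. The identity $\forall 1=1$ comes from (U1) and (U3) exactly as in Proposition 3.7(2): $\forall 1=\forall(\forall x\rightarrow x)=\forall x\rightarrow \forall x=1$. Monotonicity then follows as in Proposition 3.7(4): if $x\leq y$, then $\forall x\leq x\leq y$ by (U1), so $\forall x\rightarrow y=1$ by Proposition 2.2(1), and applying (U3) gives $\forall x\rightarrow \forall y=\forall(\forall x\rightarrow y)=\forall 1=1$, i.e. $\forall x\leq \forall y$. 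Neither argument uses (U4), so both are legitimately available here.

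Finally, combining these facts: applying monotonicity to $\forall y\leq y$ (which is (U1)) yields $\forall\forall y\leq \forall y$; for the reverse inequality, (U3) applied with the tautology $\forall y\rightarrow \forall y=1$ gives $\forall y\rightarrow \forall\forall y=\forall(\forall y\rightarrow \forall y)=\forall 1=1$, hence $\forall y\leq \forall\forall y$. Thus $\forall\forall y=\forall y$, and (U4) follows from (U4') as explained above, so $(L,\forall)$ is a monadic NM-algebra. There is no serious obstacle here; the only point that requires a little care is to notice that the idempotency of $\forall$ must be proved from (U1) and (U3) directly rather than quoted from Proposition 3.7, since the latter is stated for monadic NM-algebras and its proof of idempotency relies on the very axiom (U4) we are trying to derive.
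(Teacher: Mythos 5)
Your proof is correct and takes essentially the same route as the paper: the paper also reduces the task to deriving (U4) from (U4') via the single computation $\forall(x\vee\forall y)=\forall x\vee\forall\forall y=\forall x\vee\forall y$. The only difference is that the paper invokes the idempotency $\forall\forall y=\forall y$ without comment, while you carefully re-derive it from (U1) and (U3) alone so as not to lean on Proposition 3.7(3), whose proof uses (U4); this is a sensible but minor refinement of the same argument.
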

\begin{proof} Let $\forall$ be a strong universal quantifier on $L$. We prove that (U4') implies (U4). Indeed, we have $\forall(x\vee \forall y)=\forall x\vee \forall\forall y=\forall x\vee \forall y$, which is axiom (U4). Thus, we obtain that (U4') implies (U4).
\end{proof}

However, the converse of Proposition 3.14 is not true, in general.

\begin{example}\emph{ Consider the monadic NM-algebra $(L,\forall)$ in Example 3.5, one can easily check that it is a monadic NM-algebra but not a strong monadic NM-algebra, since $\forall(a\vee b)=\forall d=d\neq b=\forall a\vee \forall b$. }
\end{example}

In what follows, we will discuss relations between monadic NM-algebras and some related structures, likeness modal NM-algebras and rough approximation spaces.

\begin{definition}\emph{\cite{Duan} A \emph{modal NM-algebra} is a structure $(L,\wedge,\vee,\odot,\rightarrow,0,1,\tau)$, where $(L,\wedge,\vee,\odot,\rightarrow,0,1)$ is an NM-algebra and $\tau:L\rightarrow L$ is a unary operator on $L$, called a modal operator, such that the following conditions are satisfied:
\begin{enumerate}[(M1)]
  \item $\tau(1)=1$,
  \item $\tau(x\vee y)\leq \tau (x)\vee \tau (y)$,
  \item $\tau(x\rightarrow y)\leq\tau(x)\rightarrow \tau(y)$,
  \item $\tau(x)\leq \tau\tau(x)$,
  \item $\tau(x)\leq x$.
\end{enumerate}
 for any $x,y\in L$.}
\end{definition}

In what follows, we will show that a modal NM-algebra is a strong monadic NM-algebra if it satisfies the condition $(\ast)$: $\forall(\forall x\rightarrow \forall y)=\forall x\rightarrow \forall y$ for any $x,y\in L$.

\begin{theorem}Let $L$ be an NM-algebra and $\forall:L\rightarrow L$ be a unary operator on $L$. Then the following statements are equivalent:
\begin{enumerate}[(1)]
  \item $(L,\forall)$ is a strong monadic NM-algebra,
  \item $(L,\forall)$ is a modal NM-algebra with the condition $(\ast)$.
\end{enumerate}
\end{theorem}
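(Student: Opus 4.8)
The plan is to prove the two implications separately, extracting from each axiom system the facts needed for the other. For $(1)\Rightarrow(2)$, assume $(L,\forall)$ is a strong monadic NM-algebra, so $\forall$ satisfies (U1), (U2), (U3) and (U4'). I would verify the five modal axioms (M1)--(M5) in turn. Axiom (M1) is Proposition 3.7(2), i.e. $\forall 1=1$. Axiom (M5) is exactly (U1) together with Proposition 2.2(1): $\forall x\rightarrow x=1$ gives $\forall x\le x$. Axiom (M4), $\forall x\le\forall\forall x$, is immediate from Proposition 3.7(3), which gives even equality. For (M2), $\forall(x\vee y)\le\forall x\vee\forall y$, note this is the $\le$ direction of (U4'), which holds by hypothesis. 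For (M3), $\forall(x\rightarrow y)\le\forall x\rightarrow\forall y$, I would invoke Proposition 3.7(5), which is proved purely from (U1), (U3) and monotonicity and therefore is available here. Finally the extra condition $(\ast)$, $\forall(\forall x\rightarrow\forall y)=\forall x\rightarrow\forall y$, is Proposition 3.7(7), again a consequence of (U3) and idempotency. So all of (M1)--(M5) and $(\ast)$ follow, establishing $(1)\Rightarrow(2)$.

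For the converse $(2)\Rightarrow(1)$, assume $(L,\forall)$ is a modal NM-algebra satisfying $(\ast)$; I must derive (U1), (U2), (U3), (U4'). Axiom (U1) is (M5) rewritten via Proposition 2.2(1). For (U3), $\forall(\forall x\rightarrow y)=\forall x\rightarrow\forall y$: the inequality $\forall(\forall x\rightarrow y)\le\forall(\forall x)\rightarrow\forall y=\forall x\rightarrow\forall y$ follows from (M3) and (M4) (which forces $\forall\forall x=\forall x$ once combined with (M5)); for the reverse inequality I would use $(\ast)$ together with monotonicity of $\forall$ (derivable from (M3) and (M5) in the usual way) applied to $\forall x\rightarrow\forall y\le\forall x\rightarrow y$. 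For (U4'), $\forall(x\vee y)=\forall x\vee\forall y$: the $\le$ direction is (M2), and the $\ge$ direction is monotonicity of $\forall$ applied to $x\le x\vee y$ and $y\le x\vee y$. The genuinely delicate axiom is (U2), $\forall(\neg x\rightarrow\forall y)=\neg\forall x\rightarrow\forall y$, since nothing in the modal axioms directly mentions negation; here I expect to need Proposition 2.2(7) to rewrite $\neg x\rightarrow\forall y$, the involution law, and the already-established (U3) together with $(\ast)$ and the De Morgan behaviour of $\neg$ in NM-algebras. In fact, once (U3) and (U4') hold, the route is likely to mirror the $H\Rightarrow G$ argument of Theorem 3.10: establish $\neg\forall x\le\forall\neg x$-type comparisons and $\forall\exists=\exists$, then deduce (U2).

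The main obstacle will be verifying (U2) in the $(2)\Rightarrow(1)$ direction: the modal axioms are one-sided inequalities and negation-free, whereas (U2) is an equality involving $\neg$, so I expect to have to bootstrap through (U1), (U3), (U4') and $(\ast)$ — first re-deriving monotonicity and idempotency of $\forall$ from (M3), (M4), (M5), then the equalities in Proposition 3.7(5),(7),(8), and only then combining them with involutivity of $\neg$ to close the two inequalities that make up (U2). I would also double-check that $(\ast)$ is genuinely needed and that the modal axioms alone do not already suffice, by pointing to Example 3.13 or a similar witness where a modal NM-algebra fails $(\ast)$ and hence fails (U3).
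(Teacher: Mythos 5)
Your treatment of $(1)\Rightarrow(2)$ and of (U1), (U3), (U4$'$) in the converse matches the paper's argument: (M1)--(M5) and $(\ast)$ are read off from (U1), (U4$'$) and Proposition 3.7(2),(3),(5),(7), and for (U3) you close both inequalities exactly as the paper does, using (M3)--(M5) for one direction and $(\ast)$ plus monotonicity applied to $\forall x\rightarrow\forall y\leq\forall x\rightarrow y$ for the other. Two small slips: monotonicity of $\forall$ in a modal NM-algebra comes from (M3) together with (M1) (you need $\forall 1=1$, which (M5) does not give), and Example 3.13 is a strong monadic NM-algebra, hence satisfies $(\ast)$ by Proposition 3.7(7), so it cannot serve as a witness that $(\ast)$ is independent --- though that side remark is not required by the theorem.

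The genuine gap is (U2): you correctly single it out as the delicate axiom, but you never derive it --- the proposal only lists ingredients you ``expect to need'' and proposes mirroring the $H\Rightarrow G$ part of Theorem 3.10 via $\forall\exists=\exists$, which is not the route and leaves the equality unproved. The missing step is a short direct computation: by commutativity of $\oplus$ and involution, $\neg x\rightarrow\forall y=x\oplus\forall y=\forall y\oplus x=\neg\forall y\rightarrow x$; moreover $\forall\neg\forall y=\neg\forall y$ (Proposition 3.7(8), available once (U3) and $\forall 0=0$ are in hand, the latter from (M5)); hence by the already-established (U3), $\forall(\neg x\rightarrow\forall y)=\forall(\forall\neg\forall y\rightarrow x)=\forall\neg\forall y\rightarrow\forall x=\neg\forall y\rightarrow\forall x=\forall x\oplus\forall y=\neg\forall x\rightarrow\forall y$. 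No detour through $\exists$ or through comparisons of $\neg\forall x$ with $\forall\neg x$ is needed. With this computation inserted, your proof is complete and coincides with the paper's.
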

\begin{proof} $(1)\Rightarrow(2)$ Conditions (M1),(M2),(M3),(M4)and (M5) are directly contained in the definition and proposition of a strong monadic NM-algebra as (U1),(U4') and Proposition 3.7(2),(3),(5).

$(2)\Rightarrow(1)$ Conditions (U1),(U4') are directly contained in the definition of a modal NM-algebra. In order to show (U3), from (M3),(M4) and (M5) we have $\forall(\forall x\rightarrow y)\leq \forall\forall x\rightarrow\forall y=\forall x\rightarrow \forall y$. On the other hand, from (M5) and Proposition 2.2(4) we obtain that $\forall x\rightarrow \forall y\leq \forall x\rightarrow y$, where using $\forall(\forall x\rightarrow \forall y)=\forall x\rightarrow \forall y$ we get $\forall x\rightarrow \forall y\leq \forall(\forall x\rightarrow y)$. Finally, the proof of (U2) as following: from Proposition 3.7(8) and (U3), we get $\forall(\neg x\rightarrow \forall y)=\forall(x\oplus \forall y)=\forall(\forall y\oplus x)=\forall(\neg\forall y \rightarrow x)=\forall(\forall\neg \forall y\rightarrow x)=\forall\neg \forall y\rightarrow \forall x=\neg\forall y\rightarrow \forall x=\forall x\oplus \forall y=\neg \forall x\rightarrow \forall y$.
\end{proof}

\begin{definition}\emph{\cite{She} A rough approximation space is a system $\mathcal{R}=(X,L(X),(U(X))$, where}
\begin{enumerate}[(1)]
  \item\emph{ $(X,\leq,0,1)$ is a poset with respect to the order $\leq$, elements from $X$ are said to be approximable elements.
  \item $L(X)$ is a subpoest of $X$ containing of all available inner definable elements.
   \item $U(X)$ is a subpoest of $X$ containing of all available upper definable elements,}\\
\emph{  and satisfying the following axioms:}

      \begin{enumerate}[(a)]
        \item\emph{ for any approximable element $x\in X$, there exists
one element $i(x)$ if it satisfies: (1) $i(x)\in L(X)$; (2) $i(x)\leq x$; (3) for any $a\in L(X)$, $(a\leq x\Rightarrow a\leq i(x))$.
We called $i$ an inner approximation map from $X$ to $L(X)$.
      \item For any approximable element $x\in X$, there exists one element
$u(x)$ if it satisfies: (1) $u(x)\in U(X)$; (2) $x\leq U(x)$; (3)
for any $r\in U(X)$, $(x\leq r\Rightarrow u(x)\leq r)$. We called $i$ a upper approximation map from $X$ to $U(X)$.}
\end{enumerate}

\end{enumerate}
\end{definition}

\begin{theorem} A monadic NM-algebra $(L,\forall)$ induce a rough approximation space $\mathcal{R}=(L,\forall L,\exists L)$ in which,
\begin{enumerate}[(1)]
  \item $L$ is the set of approximable elements,
  \item $\forall L$ is the set of exact or definable elements,
  \item $\exists:L\rightarrow \exists L$ is the upper approximation map, satisfying (for any $x\in \exists L$)(for any $y\in L)(x\leq y$ iff $\exists x\leq y)$,
  \item $\forall:L\rightarrow \forall L$ is the inner approximation map, satisfying (for any $x\in \forall L$) (for any $y\in L)(x\leq y$ iff $x\leq \forall y)$, in which for any element $x$ in $L$, its rough approximation is defined by $(\forall x, \exists x)$.
\end{enumerate}
\end{theorem}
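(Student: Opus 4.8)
The plan is to verify directly that the system $\mathcal{R}=(L,\forall L,\exists L)$ meets every requirement of Definition 3.19, taking the inner approximation map $i$ to be $\forall$ and the upper approximation map $u$ to be $\exists$. First I would note that $(L,\leq,0,1)$ is a poset with least element $0$ and greatest element $1$, since the underlying NM-algebra is a bounded lattice; so $L$ plays the role of the set of approximable elements. Next, by Proposition 3.7(15) the set $\forall L$ is a subalgebra of $L$, hence in particular a subposet containing $0$ and $1$; likewise $\exists L$ is a subposet, and by Proposition 3.8(14) we in fact have $\exists L=\forall L$, so the "inner definable'' and "upper definable'' elements coincide, exactly as one expects for a rough approximation space arising from an interior/closure pair.

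For axiom (a) I would put $i(x)=\forall x$ for each $x\in L$. Then $i(x)\in\forall L$ trivially, and $\forall x\leq x$ is immediate from (U1) together with Proposition 2.2(1). For the extremality clause, suppose $a\in\forall L$ with $a\leq x$; Proposition 3.7(4) gives $\forall a\leq\forall x$, and since $a\in\forall L=L_{\forall}$ by Proposition 3.7(14) we have $\forall a=a$, whence $a\leq\forall x=i(x)$. Thus $\forall$ is an inner approximation map from $L$ to $\forall L$. The verification of axiom (b) is the order dual: set $u(x)=\exists x$, observe $\exists x\in\exists L$ and $x\leq\exists x$ by (E1), and if $r\in\exists L=L_{\exists}$ (Proposition 3.8(13)) with $x\leq r$, then Proposition 3.8(4) yields $\exists x\leq\exists r=r$, so $u$ is an upper approximation map from $L$ to $\exists L$.

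Finally, the two "iff'' characterizations asserted in items (3) and (4) follow from what has just been established together with the fixed-point descriptions of $\forall L$ and $\exists L$: for $x\in\exists L$ we have $\exists x=x$, so the equivalence $x\leq y\Leftrightarrow\exists x\leq y$ is immediate (equivalently, it is clause (2)$\wedge$(3) of axiom (b) combined), and for $x\in\forall L$ the equivalence $x\leq y\Leftrightarrow x\leq\forall y$ is precisely Proposition 3.7(6) read with $\forall x=x$. Assembling these pieces shows that $\mathcal{R}=(L,\forall L,\exists L)$ is a rough approximation space in which the rough approximation of an element $x\in L$ is the pair $(\forall x,\exists x)$. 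I do not anticipate a genuine obstacle here; the only point requiring a little care is keeping the roles of $\forall L$ and $\exists L$ straight and invoking $\forall L=L_{\forall}$ and $\exists L=L_{\exists}$ to upgrade the monotonicity inequalities of Propositions 3.7 and 3.8 into the required best-approximation (extremality) properties.
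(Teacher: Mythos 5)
Your proof is correct and follows essentially the same route as the paper: both arguments reduce everything to $\forall x\leq x$, $x\leq\exists x$, monotonicity of the quantifiers, and the fixed-point identities $\forall L=L_{\forall}$, $\exists L=L_{\exists}$. The paper's proof merely records the two ``iff'' equivalences (reading item (3) with $x\in L$ and $y\in\exists L$, which is evidently the intended quantification), whereas you additionally verify axioms (a) and (b) of Definition 3.19 explicitly — a more thorough write-up of the same verification, not a different method.
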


\begin{proof} Suppose that $x\in \forall L$ and $y\in L$. If $x\leq y$, then $x=\forall x\leq \forall y$. Conversely, if $x\leq \forall y$, then $x\leq \forall y\leq y$.

Suppose that $x\in L$ and $y\in \exists L$. If $x\leq y$, then $\exists x\leq \exists y=y$. Conversely, if $\exists x\leq y$, then $x\leq \exists x\leq y$.
\end{proof}

{\bf Open Problem:} Whether there exists a nontrival universal quantifier $\forall$, ensures that no two different elements have the same rough approximation?

\section{Monadic filters in monadic NM-algebras}
In this section, we introduce and investigate monadic filters in
monadic NM-algebras. In particular, we prove a representation
theorem for monadic NM-algebras and characterize two kinds of
monadic NM-algebras, which are simple and subdirectly irreducible
monadic NM-algebras.

\begin{definition}\emph{ Let $(L,\forall)$ be a monadic NM-algebra. A nonempty subset $F$ of $L$ is called a \emph{monadic
filter} of $(L,\forall)$, if $F$ is a filter of $L$ such that if $x\in F$, then $\forall x\in F$ for all $x\in L$.}
\end{definition}

We will denote the set of all monadic filters of $(L,\forall)$ by $MF[L]$.
\begin{example} \emph{Consider the monadic NM-algebra
 $(L,\forall)$ in Example 3.5, one can easily check
that the monadic filters of
$(L,\forall)$ are $\{1\}$,$\{1,d\}$, $\{1,b,c\}$ and $L$.
However, consider the monadic NM-algebra $(L,\forall)$ in Example 3.9(2),
 one can check that  $(\frac{1}{2},1]$ is a filter of $L$ but it is not
a monadic filter of $(L,\forall)$.}
\end{example}

Let $(L,\forall)$ be a monadic NM-algebra. For any nonempty subset $X$ of $L$, we denote by $\langle X \rangle_\forall$ the monadic filter of $(L,\forall)$ generated by $X$, that is, $\langle X \rangle_\forall$ is the smallest monadic filter of $(L,\forall)$ containing $X$.
If $F$ is a monadic filter of $(L,\forall)$ and $x\notin F$, then we put $\langle F,x\rangle_\forall:=\langle F\cup \{x\}\rangle_\forall$. \\

The next theorem gives a concrete description of the monadic filter generated by a nonempty subset $X$ of an NM-algebra $L$.
\begin{theorem} Let $(L,\forall)$ be a monadic NM-algebra and $X$ be a nonempty set of $L$. Then \\
$\langle X\rangle_\forall=\{x\in L|x\geq \forall x_1\odot\forall x_2\odot\cdots \odot\forall x_n, x_i\in X, n\geq 1\}$.
\end{theorem}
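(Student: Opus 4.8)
Write $D:=\{x\in L\mid x\geq \forall x_1\odot\forall x_2\odot\cdots\odot\forall x_n \text{ for some } n\geq 1 \text{ and some } x_i\in X\}$; the plan is to prove the two inclusions $\langle X\rangle_\forall\subseteq D$ and $D\subseteq\langle X\rangle_\forall$ by showing, respectively, that $D$ is a monadic filter containing $X$ and that $D$ is contained in every monadic filter containing $X$.

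First I would check that $D$ is a monadic filter of $(L,\forall)$ and that $X\subseteq D$. For the latter, given $x\in X$ we use $n=1$ and $x_1=x$: by (U1) we have $\forall x\rightarrow x=1$, hence $\forall x\leq x$, so $x\in D$; in particular $D$ is nonempty. To see that $D$ is a filter: if $x\geq \forall x_1\odot\cdots\odot\forall x_n$ and $y\geq \forall y_1\odot\cdots\odot\forall y_m$, then monotonicity of $\odot$ gives $x\odot y\geq \forall x_1\odot\cdots\odot\forall x_n\odot\forall y_1\odot\cdots\odot\forall y_m$, so $x\odot y\in D$; and if $x\in D$ and $x\leq y$ then trivially $y\geq \forall x_1\odot\cdots\odot\forall x_n$, so $y\in D$. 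Finally, for the monadic condition, suppose $x\in D$, say $x\geq \forall x_1\odot\cdots\odot\forall x_n$ with $x_i\in X$. By Proposition 3.7(4), $\forall x\geq \forall(\forall x_1\odot\cdots\odot\forall x_n)$. Since $\forall L=L_\forall$ is a subalgebra of $L$ by Proposition 3.7(14),(15), the element $\forall x_1\odot\cdots\odot\forall x_n$, being a finite $\odot$-product of elements of $\forall L$, again lies in $\forall L=L_\forall$, hence is a fixed point of $\forall$; equivalently one iterates Proposition 3.7(13). Therefore $\forall x\geq \forall x_1\odot\cdots\odot\forall x_n$, so $\forall x\in D$. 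Thus $D\in MF[L]$ and $X\subseteq D$, whence $\langle X\rangle_\forall\subseteq D$.

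Next I would prove minimality: let $G$ be any monadic filter of $(L,\forall)$ with $X\subseteq G$, and let $x\in D$, say $x\geq \forall x_1\odot\cdots\odot\forall x_n$ with each $x_i\in X\subseteq G$. Since $G$ is a monadic filter, $\forall x_i\in G$ for every $i$; since $G$ is closed under $\odot$, $\forall x_1\odot\cdots\odot\forall x_n\in G$; and since $G$ is upward closed, $x\in G$. Hence $D\subseteq G$. Taking $G=\langle X\rangle_\forall$ gives $D\subseteq\langle X\rangle_\forall$, and together with the previous inclusion we conclude $\langle X\rangle_\forall=D$, as claimed.

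The only step that is not purely formal is the verification that $D$ is closed under $\forall$; everything else is a routine adaptation of the description of ordinary generated filters recalled in Section~2. The crux there is that a finite $\odot$-product of fixed points of $\forall$ is again a fixed point, which is precisely the content of (the iteration of) Proposition 3.7(13), conveniently packaged by the statement that $\forall L$ is a subalgebra (Proposition 3.7(15)). I expect this to be the main—indeed the only—obstacle.
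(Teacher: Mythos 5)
Your proof is correct and is exactly the standard argument the paper has in mind (the paper omits it as ``easy''): show the displayed set is a monadic filter containing $X$ (the only nontrivial point being closure under $\forall$, which you rightly settle via Proposition 3.7(4) together with (13)/(15)), and that it sits inside every monadic filter containing $X$. Nothing is missing.
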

\begin{proof} The proof is easy, and hence we omit the details.
\end{proof}
\begin{theorem} Let $F$, $F_1$, $F_2$ be monadic filters of  $(L,\forall)$ and $a\notin F$. Then
\begin{enumerate}[(1)]
  \item $\langle a\rangle_\forall=\{x\in L|x\geq (\forall a)^n, n\geq 1\}$,
  \item $\langle F\cup a\rangle_\forall=\{x\in L|x\geq f\odot (\forall a)^n, f\in F\}=F\vee [\forall a)$,
  \item $\langle F_1\cup F_2\rangle_\forall=\{x\in L|x\geq f_1\odot f_2, f_1\in F_1, f_2\in F_2\}=F_1\vee F_2$,
  \item if $a\leq b$, then $\langle b\rangle_\forall\subseteq \langle a\rangle_\forall$,
  \item $\langle \forall a\rangle_\forall=\langle a\rangle_\forall$,
  \item $\langle a\rangle_\forall\vee \langle b\rangle_\forall=\langle a\wedge b\rangle_\forall=\langle a\odot b\rangle_\forall$,
  \item if $(L,\forall)$ is a strong monadic NM-algebra, then $\langle a\rangle_\forall\cap\langle b\rangle_\forall=\langle \forall a\vee \forall b\rangle_\forall$.
\end{enumerate}
\end{theorem}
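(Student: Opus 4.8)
The plan is to run everything off the explicit description of generated monadic filters supplied by Theorem 4.4, namely $\langle X\rangle_\forall=\{x\in L\mid x\ge\forall x_1\odot\cdots\odot\forall x_n,\ x_i\in X,\ n\ge1\}$, together with the elementary properties of $\forall$ collected in Proposition 3.7: $\forall x\le x$ (U1), idempotence $\forall\forall x=\forall x$ (3.7(3)), monotonicity (3.7(4)), $\forall(x\odot y)\ge\forall x\odot\forall y$ (3.7(10)), and $\forall(\forall x\odot\forall y)=\forall x\odot\forall y$ (3.7(13)), the last giving $\forall((\forall a)^n)=(\forall a)^n$ by iteration. Item (1) is then immediate: apply Theorem 4.4 to $X=\{a\}$; every factor equals $\forall a$, so a product of $n$ of them is $(\forall a)^n$. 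Item (4) follows from (1) and monotonicity: $a\le b$ gives $\forall a\le\forall b$, hence $(\forall a)^n\le(\forall b)^n$, so $x\ge(\forall b)^n$ forces $x\ge(\forall a)^n$. Item (5) follows from (1) and idempotence, since $\langle\forall a\rangle_\forall=\{x\mid x\ge(\forall\forall a)^n\}=\{x\mid x\ge(\forall a)^n\}=\langle a\rangle_\forall$.

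For (2) and (3) I would check that the displayed right-hand sides are monadic filters containing the generators and minimal as such. For (2), set $G=\{x\mid x\ge f\odot(\forall a)^n,\ f\in F\}$ (with the convention $x^0=1$, or equivalently $n\ge1$ since $f\ge f\odot\forall a$). This is the ordinary filter join $F\vee[\forall a)$, hence a filter of $L$; it is closed under $\forall$ because $\forall(f\odot(\forall a)^n)\ge\forall f\odot\forall((\forall a)^n)=\forall f\odot(\forall a)^n$ with $\forall f\in F$ — this is where the monadicity of $F$ enters; it contains $F$ and contains $a$ (as $a\ge\forall a$); and any monadic filter containing $F\cup\{a\}$ contains $\forall a$, hence each $(\forall a)^n$, hence each $f\odot(\forall a)^n$ and everything above it. So $G=\langle F\cup a\rangle_\forall$. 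The argument for (3) is the same after grouping the factors of a $\odot$-product of $\forall x_i$'s according to which of $F_1,F_2$ each $x_i$ belongs to (using that a filter is closed under $\odot$); alternatively both (2) and (3) fall straight out of Theorem 4.4 by that same grouping.

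For (6) I would run a ring of inclusions. Since $a\odot b\le a\wedge b\le a$ and $a\odot b\le a\wedge b\le b$, item (4) yields $\langle a\rangle_\forall,\langle b\rangle_\forall\subseteq\langle a\wedge b\rangle_\forall\subseteq\langle a\odot b\rangle_\forall$, hence $\langle a\rangle_\forall\vee\langle b\rangle_\forall\subseteq\langle a\wedge b\rangle_\forall\subseteq\langle a\odot b\rangle_\forall$; conversely $a\odot b\in\langle a\rangle_\forall\vee\langle b\rangle_\forall$ because that monadic filter contains $a$ and $b$ and is closed under $\odot$, so $\langle a\odot b\rangle_\forall\subseteq\langle a\rangle_\forall\vee\langle b\rangle_\forall$ and the three sets coincide.

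Item (7) is the only part with real content. By (1), $\langle a\rangle_\forall=\{x\mid x\ge(\forall a)^n\}$ and $\langle b\rangle_\forall=\{x\mid x\ge(\forall b)^m\}$; since the sequences $((\forall a)^n)_n$ and $((\forall b)^m)_m$ are decreasing, $x$ lies in the intersection iff $x\ge(\forall a)^n\vee(\forall b)^n$ for some common $n$. By (U4$'$) and idempotence, $\forall(\forall a\vee\forall b)=\forall\forall a\vee\forall\forall b=\forall a\vee\forall b$, so by (1) $\langle\forall a\vee\forall b\rangle_\forall=\{x\mid x\ge(\forall a\vee\forall b)^k\}$. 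The plan is to sandwich the two descriptions by
\[
(\forall a\vee\forall b)^{2n}\ \le\ (\forall a)^n\vee(\forall b)^n\ \le\ (\forall a\vee\forall b)^{n},
\]
valid for every $n$: the right inequality is monotonicity of $\odot$; for the left one, distributivity of $\odot$ over $\vee$ (which holds in any NM-algebra) expands $(\forall a\vee\forall b)^{2n}$ into a join of terms $(\forall a)^i\odot(\forall b)^j$ with $i+j=2n$, and in each such term $i\ge n$ or $j\ge n$, so it lies below $(\forall a)^n$ or below $(\forall b)^n$. These inequalities make the two membership conditions equivalent, giving $\langle a\rangle_\forall\cap\langle b\rangle_\forall=\langle\forall a\vee\forall b\rangle_\forall$. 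The main obstacle sits here and is not deep: one must spot the factor-of-two in the exponent sandwich and notice that passing from $\{x\mid x\ge(\forall a)^n\vee(\forall b)^n\text{ for some }n\}$ to the principal filter of $\forall a\vee\forall b$ really needs both inequalities; I would also flag explicitly that $\odot$ distributes over $\vee$, since that identity is not among the items listed in Proposition 2.2. Everywhere else the work is routine bookkeeping, the one recurring point being that the $F$-part (resp. $F_1$-, $F_2$-part) of a $\odot$-product of terms $\forall x_i$ genuinely returns to $F$ (resp. $F_1$, $F_2$) — precisely the defining closure $\forall[F]\subseteq F$ of a monadic filter.
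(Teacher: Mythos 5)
Your proposal is correct and follows essentially the same route as the paper: items (1)--(5) by direct use of the generated-filter description, (6) by the same chain of inclusions, and (7) by the same power-of-join trick, with your exponent bound $(\forall a\vee\forall b)^{2n}\le(\forall a)^n\vee(\forall b)^n$ playing the role of the paper's $(\forall a\vee\forall b)^{mn}\le(\forall a)^m\vee(\forall b)^n$. Your explicit flagging of the distributivity of $\odot$ over $\vee$ and of where the monadicity of $F$ enters in (2)--(3) only makes explicit what the paper leaves tacit.
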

\begin{proof} The proofs of $(1)-(5)$ are obvious.

$(6)$ Since $a\odot b\leq a\wedge b\leq a,b$, we deduce that $\langle a\rangle_\forall$, $\langle b\rangle_\forall\subseteq\langle a\wedge b\rangle_\forall\subseteq \langle a\odot b\rangle_\forall$. It follows from that $\langle a\rangle_\forall \vee \langle b\rangle_\forall\subseteq \langle a\wedge b\rangle_\forall \subseteq \langle a\odot b\rangle_\forall$. Conversely, let $a\in \langle a\odot b\rangle_\forall$. Then, for some natural number $n\geq 1$, $a\geq (\forall(a\odot b))^n\geq (\forall a\odot \forall b)^n=(\forall a)^n\odot (\forall b)^n$. Hence $a\in \langle a\rangle_\forall \vee \langle b\rangle_\forall$, we deduce that $\langle a\odot b\rangle_\forall \subseteq \langle a\rangle_\forall \vee \langle b\rangle_\forall$. Therefore, $\langle a\rangle_\forall \vee \langle b\rangle_\forall=\langle a\wedge b\rangle_\forall=\langle a\odot b\rangle_\forall$.

(7) Since $\forall a\leq \forall a\vee \forall b$, we deduce that $\langle\forall a\vee \forall b\rangle_\forall \subseteq \langle \forall a\rangle_\forall=\langle a\rangle_\forall$. Analogously, $\langle\forall a\vee \forall b\rangle_\forall \subseteq \langle \forall b\rangle_\forall=\langle b\rangle_\forall$. It follows that $\langle\forall a\vee \forall b\rangle_\forall \subseteq \langle a\rangle_\forall \cap \langle b\rangle_\forall$. Moreover, let $t\in \langle a\rangle_\forall \cap \langle b\rangle_\forall$. Then for some natural number $n,m\geq 1$, $t\geq (\forall a)^m$ and $t\geq (\forall b)^n$. Hence $t\geq (\forall a)^m\vee (\forall b)^n\geq (\forall a\vee\forall b)^{mn}=(\forall(a\vee b))^{mn}$, we deduce that $a\in \langle \forall a\vee \forall b\rangle_\forall$, that is, $\langle a\rangle_\forall\cap\langle b\rangle_\forall\subseteq\langle \forall a\vee \forall b\rangle_\forall$. Therefore, $\langle a\rangle_\forall\cap\langle b\rangle_\forall=\langle \forall a\vee \forall b\rangle_\forall$.
\end{proof}
\begin{theorem}Let $(L,\forall)$ be a monadic NM-algebra and $F$ be a filter of $L$. Then the following statements are equivalent:
\begin{enumerate}[(1)]
  \item $F$ is a monadic filter of $(L,\forall)$,
  \item $F=\langle F\cap L_{\forall}\rangle$.
\end{enumerate}
\end{theorem}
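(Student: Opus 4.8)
The plan is to prove the two implications separately, using the description of $L_\forall = \forall L$ as a subalgebra (Proposition 3.7(14),(15)) and the concrete form of the monadic filter generated by a set (Theorem 4.4).

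For the direction $(2)\Rightarrow(1)$, suppose $F = \langle F\cap L_\forall\rangle$. First observe that $F\cap L_\forall$ is a nonempty subset of $L$ (it contains $1$, since $1\in F$ and $\forall 1 = 1$), so the ordinary generated filter $\langle F\cap L_\forall\rangle$ makes sense and is a filter of $L$ — and by hypothesis it equals $F$, so $F$ is a filter. To check the monadic closure condition, take $x\in F$; then $x\geq y_1\odot\cdots\odot y_n$ for some $y_i\in F\cap L_\forall$. Applying $\forall$, using Proposition 3.7(4) (monotonicity) and Proposition 3.7(10) ($\forall(y_1\odot\cdots\odot y_n)\geq \forall y_1\odot\cdots\odot\forall y_n$), together with $\forall y_i = y_i$ since $y_i\in L_\forall$, we get $\forall x \geq \forall(y_1\odot\cdots\odot y_n) \geq y_1\odot\cdots\odot y_n$, hence $\forall x\in \langle F\cap L_\forall\rangle = F$. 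Thus $F$ is a monadic filter.

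For the direction $(1)\Rightarrow(2)$, assume $F$ is a monadic filter. The inclusion $\langle F\cap L_\forall\rangle \subseteq F$ is immediate, since $F\cap L_\forall\subseteq F$ and $F$ is a filter closed under the generation operation (a filter containing a set contains the filter it generates). For the reverse inclusion, let $x\in F$. Since $F$ is a monadic filter, $\forall x\in F$; and $\forall x\in L_\forall$ by Proposition 3.7(3) (i.e. $\forall\forall x = \forall x$), so $\forall x\in F\cap L_\forall$. By axiom (U1) we have $\forall x\leq x$, so $x$ lies above an element of $F\cap L_\forall$ and therefore $x\in \langle F\cap L_\forall\rangle$. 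Hence $F\subseteq \langle F\cap L_\forall\rangle$, giving equality.

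I expect no serious obstacle here; the only point requiring a little care is making sure $F\cap L_\forall$ is nonempty so that $\langle F\cap L_\forall\rangle$ is well-defined as a (generated) filter, and in the $(2)\Rightarrow(1)$ direction correctly invoking the inequality $\forall(y_1\odot\cdots\odot y_n)\geq \forall y_1\odot\cdots\odot\forall y_n$ rather than an equality, which need not hold in general. Everything else is a routine use of (U1), the idempotence of $\forall$, and the explicit description of generated (monadic) filters.
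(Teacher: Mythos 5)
Your proof is correct: the paper does not spell out an argument for this theorem but only refers to the analogous Lemma 9 of Bezhanishvili, and your argument (using (U1), idempotence of $\forall$, $\forall(x\odot y)\geq \forall x\odot\forall y$, and the explicit description of generated filters) is precisely the standard one that reference supplies. The only superfluous step is re-deriving that $F$ is a filter in the direction $(2)\Rightarrow(1)$, since this is already part of the hypothesis.
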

\begin{proof}  It is similar to the proof of Lemma 9 in \cite{Bezhanishvili}.
\end{proof}



\begin{corollary} Let $(L,\forall)$ be a monadic NM-algebra. Then the lattice $MF[L]$  is isomorphic to the lattice $F[L_{\forall}]$ of all filters of the NM-algebra $L_{\forall}$.
\end{corollary}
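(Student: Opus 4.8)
The plan is to exhibit an explicit pair of mutually inverse, order-preserving maps between $MF[L]$ and $F[L_{\forall}]$ and then invoke the fact that an order isomorphism between lattices automatically preserves finite meets and joins. Concretely, define $\Phi:MF[L]\to F[L_{\forall}]$ by $\Phi(F)=F\cap L_{\forall}$ and $\Psi:F[L_{\forall}]\to MF[L]$ by $\Psi(G)=\langle G\rangle$, the filter of $L$ generated by $G$.

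First I would check that both maps are well defined. For $\Phi$: by Proposition 3.7(14),(15) the set $L_{\forall}=\forall L$ is a subalgebra of $L$, so $F\cap L_{\forall}$ contains $1$, is closed under $\odot$, and is upward closed inside $L_{\forall}$; hence it is a filter of $L_{\forall}$. For $\Psi$: since $\forall g=g$ for every $g\in G\subseteq L_{\forall}$, Theorem 4.4 applied with $X=G$ gives $\langle G\rangle_{\forall}=\{x\in L\mid x\geq g_1\odot\cdots\odot g_n \text{ for some } g_i\in G,\ n\geq1\}=\langle G\rangle$; in particular the ordinary filter $\langle G\rangle$ is already a monadic filter. (Equivalently, one checks directly that if $x\geq g_1\odot\cdots\odot g_n$ with $g_i\in L_{\forall}$, then by Proposition 3.7(10) and (13) one has $\forall x\geq\forall(g_1\odot\cdots\odot g_n)=g_1\odot\cdots\odot g_n$, so $\forall x\in\langle G\rangle$.)

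Next I would verify that $\Phi$ and $\Psi$ are mutually inverse. The identity $\Psi(\Phi(F))=\langle F\cap L_{\forall}\rangle=F$ for $F\in MF[L]$ is exactly Theorem 4.6. For the other composite, let $G\in F[L_{\forall}]$; the inclusion $G\subseteq\langle G\rangle\cap L_{\forall}$ is immediate, and conversely if $x\in\langle G\rangle\cap L_{\forall}$ then $x\geq g_1\odot\cdots\odot g_n$ for some $g_i\in G$, where $g_1\odot\cdots\odot g_n\in G$ since $G$ is a filter of the subalgebra $L_{\forall}$; as $x\in L_{\forall}$ lies above this element, upward closure of $G$ inside $L_{\forall}$ forces $x\in G$, so $\Phi(\Psi(G))=G$.

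Finally, both maps are plainly monotone for $\subseteq$ ($F_1\subseteq F_2$ gives $F_1\cap L_{\forall}\subseteq F_2\cap L_{\forall}$, and $G_1\subseteq G_2$ gives $\langle G_1\rangle\subseteq\langle G_2\rangle$), so $\Phi$ and $\Psi$ constitute a mutually inverse monotone bijection between the posets $(MF[L],\subseteq)$ and $(F[L_{\forall}],\subseteq)$, hence a lattice isomorphism. The only step that genuinely requires care is the well-definedness of $\Psi$ — that the filter of $L$ generated by a filter of $L_{\forall}$ is again closed under $\forall$ and meets $L_{\forall}$ in exactly $G$ — and the crucial ingredient there is that $\forall$ fixes products of elements of $L_{\forall}$ (Proposition 3.7(13)); everything else is bookkeeping, with Theorem 4.6 supplying the substantive identity.
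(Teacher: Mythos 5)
Your proof is correct and follows essentially the same route the paper takes (the paper simply delegates to Corollary 10 of Bezhanishvili, which rests on exactly the correspondence $F\mapsto F\cap L_{\forall}$, $G\mapsto\langle G\rangle$ backed by the identity $F=\langle F\cap L_{\forall}\rangle$ of Theorem 4.5). Only note the two harmless citation slips: the identity $\Psi(\Phi(F))=F$ is Theorem 4.5 (not 4.6), and the description of the generated monadic filter you invoke is Theorem 4.3 (not 4.4).
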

\begin{proof} It is similar to the proof of Corollary 10 in \cite{Bezhanishvili}.

\end{proof}

\begin{proposition} Let $(L,\forall)$ be a monadic NM-algebra, $x\in L_{\forall}$ and $F$ be a monadic filter of $(L,\forall)$. Then $\langle F\cup\{x\}\rangle$ is a monadic filter of $(L,\forall)$.
\end{proposition}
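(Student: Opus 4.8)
The plan is to prove that the ordinary filter $\langle F\cup\{x\}\rangle$ is closed under the quantifier $\forall$; since it is already a filter of the NM-algebra $L$ by construction, this suffices to conclude that it is a monadic filter of $(L,\forall)$. First I would invoke the description of a generated filter recalled in the Preliminaries: because $F$ is a filter and $x\in L$, we have $\langle F\cup\{x\}\rangle=\{y\in L\mid y\geq f\odot x^{n}\text{ for some }f\in F,\ n\in N\}$.

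Next I would take an arbitrary $y\in\langle F\cup\{x\}\rangle$, so that $y\geq f\odot x^{n}$ for some $f\in F$ and $n\in N$. Applying monotonicity of $\forall$ (Proposition 3.7(4)) gives $\forall y\geq\forall(f\odot x^{n})$, and iterating the inequality $\forall(u\odot v)\geq\forall u\odot\forall v$ of Proposition 3.7(10) yields $\forall(f\odot x^{n})\geq\forall f\odot(\forall x)^{n}$. Two observations then close the argument: $\forall f\in F$ since $F$ is a monadic filter, and $(\forall x)^{n}=x^{n}$ since $x\in L_{\forall}$ means precisely $\forall x=x$ (if one prefers, $x^{n}\in L_{\forall}$ because $\forall L=L_{\forall}$ is a subalgebra by Proposition 3.7(15), hence $\forall(x^{n})=x^{n}$). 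Therefore $\forall y\geq\forall f\odot x^{n}$ with $\forall f\in F$, so $\forall y\in\langle F\cup\{x\}\rangle$, which is the desired closure.

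Alternatively, and even more quickly, one can note that Theorem 4.4(2) gives $\langle F\cup\{x\}\rangle_{\forall}=\{y\in L\mid y\geq f\odot(\forall x)^{n},\ f\in F,\ n\in N\}$; since $\forall x=x$ this set is exactly $\{y\in L\mid y\geq f\odot x^{n}\}=\langle F\cup\{x\}\rangle$, so the ordinary filter and the monadic filter generated by $F\cup\{x\}$ coincide, and in particular the former is monadic.

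I do not anticipate any genuine obstacle here: the proof is a one-line computation once the generation formula and Proposition 3.7(10) are in hand. The only point meriting a moment of care is the passage from $x\in L_{\forall}$ to control of the powers $x^{n}$, which is immediate from $\forall x=x$ (and, if wanted in the form $\forall(x^{n})=x^{n}$, from $L_{\forall}$ being closed under $\odot$).
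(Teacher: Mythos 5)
Your proof is correct. Note that the paper itself does not spell out an argument here: it simply says the proof is analogous to Lemma 8 of Bezhanishvili's paper on monadic Heyting algebras, so your write-up is a self-contained version of what that citation hides, and it uses only tools already available in this paper (the generation formula $\langle F\cup x\rangle=\{y\mid y\geq f\odot x^{n}\}$ from the Preliminaries, monotonicity of $\forall$ in Proposition 3.7(4), and the inequality $\forall(u\odot v)\geq\forall u\odot\forall v$ in Proposition 3.7(10)), together with $\forall f\in F$ from monadicity of $F$ and $\forall x=x$ from $x\in L_{\forall}$. The chain $\forall y\geq\forall(f\odot x^{n})\geq\forall f\odot(\forall x)^{n}=\forall f\odot x^{n}$ does exactly what is needed, including the degenerate case $n=0$. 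The only cosmetic caveat concerns your ``quicker'' alternative: Theorem 4.4(2) is stated under the hypothesis $a\notin F$, so to use it verbatim you should dispose separately of the trivial case $x\in F$ (where $\langle F\cup\{x\}\rangle=F$ is already monadic); your primary argument needs no such case split and is the cleaner route.
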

\begin{proof} It is similar to the proof of Lemma 8 in \cite{Bezhanishvili}.
\end{proof}

\begin{definition}\emph{Let $(L,\forall)$ be a monadic NM-algebra and $\theta$ be a congruence on  $L$. Then $\theta$ is called a \emph{monadic congruence} on $(L,\forall)$ if $(x,y)\in\theta$ implies $(\forall x,\forall y)\in \theta$, for any $x,y\in L$.}
\end{definition}

We will denote the set of all monadic congruences of $(L,\forall)$ by $MC[L]$.
\begin{theorem}For any monadic NM-algebra, there exists a one to one correspondence between its monadic filters and its monadic congruences.
\end{theorem}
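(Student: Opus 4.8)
The plan is to show that the standard bijection between filters and congruences of an NM-algebra (recalled in Section 2) restricts to a bijection between monadic filters and monadic congruences. Recall that to a filter $F$ of $L$ one associates the congruence $\equiv_F$ given by $x\equiv_F y$ iff $x\rightarrow y\in F$ and $y\rightarrow x\in F$, and to a congruence $\theta$ of $L$ one associates the filter $F_\theta=[1]_\theta=\{x\in L\mid (x,1)\in\theta\}$; these two assignments are mutually inverse. Hence it suffices to verify two things: (i) if $F\in MF[L]$ then $\equiv_F\in MC[L]$, and (ii) if $\theta\in MC[L]$ then $F_\theta\in MF[L]$.

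For (i), let $F$ be a monadic filter and suppose $x\equiv_F y$, i.e. $x\rightarrow y\in F$ and $y\rightarrow x\in F$. Since $F$ is monadic, $\forall(x\rightarrow y)\in F$ and $\forall(y\rightarrow x)\in F$. By Proposition 3.7(5), $\forall(x\rightarrow y)\leq \forall x\rightarrow\forall y$ and $\forall(y\rightarrow x)\leq\forall y\rightarrow\forall x$, so upward closure of $F$ yields $\forall x\rightarrow\forall y\in F$ and $\forall y\rightarrow\forall x\in F$, that is, $\forall x\equiv_F\forall y$. Thus $\equiv_F$ is a monadic congruence.

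For (ii), let $\theta$ be a monadic congruence and suppose $x\in F_\theta$, i.e. $(x,1)\in\theta$. Since $\theta$ is monadic, $(\forall x,\forall 1)\in\theta$; by Proposition 3.7(2), $\forall 1=1$, so $(\forall x,1)\in\theta$, i.e. $\forall x\in F_\theta$. Thus $F_\theta$ is a monadic filter.

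Combining (i) and (ii) with the known filter--congruence correspondence on NM-algebras, the mutually inverse maps $F\mapsto\equiv_F$ and $\theta\mapsto F_\theta$ cut down to mutually inverse maps between $MF[L]$ and $MC[L]$, giving the desired one-to-one correspondence (indeed an order isomorphism, if one keeps track of inclusion on both sides). The argument is essentially routine; the only point requiring attention is the transfer of the monadic closure condition in each direction, which is exactly what Proposition 3.7(2) and (5) supply.
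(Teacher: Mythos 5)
Your proof is correct and follows essentially the same route the paper intends: the paper simply defers to the analogous argument for monadic Heyting algebras (Theorem 11 of Bezhanishvili), which is exactly this restriction of the standard filter--congruence correspondence, with the monadicity transfer handled by $\forall(x\rightarrow y)\leq\forall x\rightarrow\forall y$ and $\forall 1=1$ as you do. Your write-up supplies the details the paper omits, and nothing further is needed.
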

\begin{proof}  It is similar to the proof of Theorem 11 in \cite{Bezhanishvili}.
 \end{proof}



Let $(L,\forall)$ be a monadic NM-algebra and $F$ be a monadic filter. We define a mapping $\forall_F: L/F\rightarrow L/F$ such that
 $\forall_F([x])=[\forall x]$, for any $x\in L$.
\begin{proposition}Let $(L,\forall)$ be a monadic NM-algebra and $F$ a monadic filter of $(L,\forall)$. Then $(L/F,\forall_F)$ is a monadic NM-algebra.
\end{proposition}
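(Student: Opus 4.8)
The plan is to proceed in two stages: first I would establish that $\forall_F$ is a well-defined unary operation on $L/F$, and then verify that it satisfies the axioms (U1)--(U4) of a universal quantifier.

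For well-definedness, suppose $[x]=[y]$ in $L/F$, that is, $x\to y\in F$ and $y\to x\in F$. By Proposition 3.7(5) we have $\forall(x\to y)\leq\forall x\to\forall y$ and $\forall(y\to x)\leq\forall y\to\forall x$. Since $F$ is a monadic filter, $x\to y\in F$ forces $\forall(x\to y)\in F$, and then upward closure of the filter $F$ gives $\forall x\to\forall y\in F$; symmetrically $\forall y\to\forall x\in F$. Hence $[\forall x]=[\forall y]$, so $\forall_F([x])=[\forall x]$ does not depend on the chosen representative. Equivalently, this is the observation that the congruence $\equiv_F$ attached to a monadic filter $F$ is a monadic congruence in the sense of Definition 4.9, which is already the content behind Theorem 4.11.

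Next, recall that $L/F$ is an NM-algebra under the operations induced from $L$, with $[x]\wedge[y]=[x\wedge y]$, $[x]\vee[y]=[x\vee y]$, $[x]\to[y]=[x\to y]$, $\neg[x]=[\neg x]$, $0=[0]$, $1=[1]$, and the canonical surjection $\pi\colon L\to L/F$, $\pi(x)=[x]$, is an NM-algebra homomorphism. Each of (U1)--(U4) for $\forall_F$ then reduces to the corresponding identity for $\forall$ in $L$ by applying $\pi$: for instance (U1) becomes $\forall_F[x]\to[x]=[\forall x\to x]=[1]=1$; (U2) becomes $\forall_F(\neg[x]\to\forall_F[y])=[\forall(\neg x\to\forall y)]=[\neg\forall x\to\forall y]=\neg\forall_F[x]\to\forall_F[y]$; (U3) becomes $\forall_F(\forall_F[x]\to[y])=[\forall(\forall x\to y)]=[\forall x\to\forall y]=\forall_F[x]\to\forall_F[y]$; and (U4) becomes $\forall_F([x]\vee\forall_F[y])=[\forall(x\vee\forall y)]=[\forall x\vee\forall y]=\forall_F[x]\vee\forall_F[y]$. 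Therefore $\forall_F$ is a universal quantifier on $L/F$, and $(L/F,\forall_F)$ is a monadic NM-algebra. The only genuine step is the well-definedness argument; once $\equiv_F$ is seen to respect $\forall$, the axiom checks are purely formal, since $\pi$ transports each defining identity of $\forall$ to the corresponding identity of $\forall_F$, so I expect no real obstacle beyond combining Proposition 3.7(5) with the monadic-filter property.
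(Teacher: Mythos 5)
Your proposal is correct and follows essentially the same route as the paper: the paper's proof simply notes that well-definedness of $\forall_F$ comes from the monadic filter--monadic congruence correspondence (Theorem 4.9) and omits the routine verification of (U1)--(U4). Your argument via Proposition 3.7(5) together with the monadic-filter property and upward closure is exactly the well-definedness step hidden in that correspondence, and your axiom checks are the omitted computations, so you have just filled in the details the paper leaves to the reader.
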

\begin{proof} From the proof of Theorem 4.9, one can see that the mapping $\forall_F$ is well defined. Moreover, the (U1)-(U4) in Definition 3.1 are checked easily, so we omit them.
\end{proof}
\begin{definition}\emph{Let $(L,\forall)$ be a monadic NM-algebra. A proper monadic filter $F$ of $(L,\forall)$ is called a \emph{prime monadic filter} of $(L,\forall)$, if for all monadic filter $F_1$, $F_2$ of $(L,\forall)$ such that $F_1\cap F_2\subseteq F$, then $F_1\subseteq F$ or $F_2\subseteq F$.}
\end{definition}
\begin{example}\emph{Consider the Example 3.5, one can easily obtain that $\{d,1\}$ and $\{b,c,1\}$ are prime monadic filters of $(L,\forall)$. Moreover, one can check that $\{1\}$ is a monadic filter of $(L,\forall)$, but it is not a prime monadic filter. In fact, $\{d,1\}$ and $\{b,c,1\}$ are monadic filters of $(L,\forall)$ and $\{1\}\subseteq \{d,1\}\cap \{b,c,1\}$, but $\{d,1\}\nsubseteq \{1\}$ and $\{b,c,1\}\nsubseteq \{1\}$.}
\end{example}
\begin{theorem}Let $(L,\forall)$ be a strong monadic NM-algebra and $F$ be a proper monadic filter of $(L,\forall)$. Then the following statements are equivalent:
\begin{enumerate}[(1)]
  \item $F$ is a prime monadic filter of $(L,\forall)$,
  \item if $\forall x\vee \forall y\in F$ for some $x,y\in L$, then $\forall x\in F$ or $\forall y\in F$,
  \item $\forall x\rightarrow \forall y\in F$ or $\forall y\rightarrow\forall x\in F$ for any $x,y\in L$,
  \item $(L/F,\forall_F)$ is a linearly ordered monadic NM-algebra.
\end{enumerate}
\end{theorem}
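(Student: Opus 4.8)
The plan is to prove the chain of implications $(1)\Rightarrow(2)\Rightarrow(3)\Rightarrow(4)\Rightarrow(1)$. Throughout I will use that in the quotient NM-algebra $L/F$ one has $[a]\le[b]$ iff $a\rightarrow b\in F$ (Proposition~2.2(8)), that $F$ is upward closed and closed under $\forall$, and that, since $(L,\forall)$ is a \emph{strong} monadic NM-algebra, $\forall(a\vee b)=\forall a\vee\forall b$ for all $a,b\in L$.

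For $(1)\Rightarrow(2)$, suppose $\forall x\vee\forall y\in F$. Since $F$ is a monadic filter, $\langle\forall x\vee\forall y\rangle_\forall\subseteq F$, and by Theorem~4.4(5),(7) this monadic filter equals $\langle x\rangle_\forall\cap\langle y\rangle_\forall$; this is the one place where the strong hypothesis is essential, via Theorem~4.4(7). As $\langle x\rangle_\forall,\langle y\rangle_\forall\in MF[L]$ and $F$ is a prime monadic filter, one of them is contained in $F$, and since $\forall x\in\langle\forall x\rangle_\forall=\langle x\rangle_\forall$ this gives $\forall x\in F$ or $\forall y\in F$. For $(2)\Rightarrow(3)$, Definition~2.1(4) gives $(\forall x\rightarrow\forall y)\vee(\forall y\rightarrow\forall x)=1\in F$, and by Proposition~3.7(7) both $u:=\forall x\rightarrow\forall y$ and $v:=\forall y\rightarrow\forall x$ are fixed by $\forall$; hence $\forall u\vee\forall v=u\vee v=1\in F$, and applying (2) to $u,v$ in place of $x,y$ yields $u\in F$ or $v\in F$, which is exactly (3).

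For $(3)\Rightarrow(4)$: by Proposition~3.7(7), condition (3) says precisely that for all $u,v\in L_\forall$ at least one of $u\rightarrow v,\ v\rightarrow u$ lies in $F\cap L_\forall$, which by the standard argument (Definition~2.1(3),(4) and Proposition~2.2(5)) is equivalent to $F\cap L_\forall$ being a prime filter of the NM-algebra $L_\forall$. This lifts to primeness of $F$ in $L$: if $a\vee b\in F$, then $\forall a\vee\forall b=\forall(a\vee b)\in F\cap L_\forall$ by strongness and the monadicity of $F$, so $\forall a\in F$ or $\forall b\in F$, and since $\forall a\le a$ (by (U1)) and $F$ is upward closed we obtain $a\in F$ or $b\in F$; thus $L/F$ is linearly ordered and, by Proposition~4.10, $(L/F,\forall_F)$ is a linearly ordered monadic NM-algebra. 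For $(4)\Rightarrow(1)$, assume $(L/F,\forall_F)$ is linearly ordered, so $F$ is a prime filter of $L$. If $F_1,F_2\in MF[L]$ satisfy $F_1\cap F_2\subseteq F$ but $F_1\not\subseteq F$ and $F_2\not\subseteq F$, pick $a\in F_1\setminus F$ and $b\in F_2\setminus F$; then $a\vee b\in F_1\cap F_2\subseteq F$, so primeness of $F$ forces $a\in F$ or $b\in F$, a contradiction. Hence $F_1\subseteq F$ or $F_2\subseteq F$, and $F$ is a prime monadic filter.

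I expect $(1)\Rightarrow(2)$ to be the main obstacle, since it is the only implication genuinely requiring the strong hypothesis and it rests on the identity $\langle x\rangle_\forall\cap\langle y\rangle_\forall=\langle\forall x\vee\forall y\rangle_\forall$ of Theorem~4.4(7), which is false for arbitrary monadic NM-algebras (Example~3.5 already shows $\forall$ need not preserve $\vee$). As a more conceptual alternative, worth noting, $F\mapsto F\cap L_\forall$ is the lattice isomorphism $MF[L]\cong F[L_\forall]$ of Corollary~4.6; since meet-primeness is a lattice invariant, (1) is equivalent to ``$F\cap L_\forall$ is a prime filter of $L_\forall$'', and (2), (3), (4) are then just the familiar reformulations of that statement transported back to $L$ by means of the identity $\forall(a\vee b)=\forall a\vee\forall b$.
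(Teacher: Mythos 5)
Your proof is correct, and on the one equivalence the paper actually proves in full --- $(1)\Leftrightarrow(2)$ --- your $(1)\Rightarrow(2)$ coincides with the paper's: both hinge on Theorem~4.4(5),(7), i.e.\ $\langle x\rangle_\forall\cap\langle y\rangle_\forall=\langle \forall x\vee\forall y\rangle_\forall$, which is exactly where the strong hypothesis is used. Where you differ is in organization and in what is made explicit: the paper proves $(2)\Rightarrow(1)$ directly (take $x\in F_1\setminus F$, $y\in F_2\setminus F$, push $\forall x\vee\forall y$ into $F_1\cap F_2\subseteq F$) and then simply cites Wang--Zhou for $(2)\Leftrightarrow(3)$ and $(3)\Leftrightarrow(4)$, whereas you close the cycle $(1)\Rightarrow(2)\Rightarrow(3)\Rightarrow(4)\Rightarrow(1)$ and supply those delegated steps yourself: $(2)\Rightarrow(3)$ via prelinearity together with $\forall(\forall x\rightarrow\forall y)=\forall x\rightarrow\forall y$ (Proposition~3.7(7)), and $(3)\Rightarrow(4)$ by reading (3) as primeness of $F\cap L_\forall$ in $L_\forall$ and lifting it to ordinary primeness of $F$ in $L$ through $\forall(a\vee b)=\forall a\vee\forall b$ plus (U1); your $(4)\Rightarrow(1)$ is then essentially the paper's $(2)\Rightarrow(1)$ argument run from ordinary primeness of $F$, which is legitimate since $F$ prime is equivalent to $L/F$ linearly ordered (Section~2). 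This buys a self-contained proof and the clean conceptual reading via Corollary~4.6 ($MF[L]\cong F[L_\forall]$). One small caveat: your side remark that $(1)\Rightarrow(2)$ is the \emph{only} implication genuinely requiring strongness is not supported by your own argument, since your $(3)\Rightarrow(4)$ lifting also invokes $\forall(a\vee b)=\forall a\vee\forall b$; either show that step can be done without it or drop the claim.
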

\begin{proof}$(1)\Rightarrow(2)$ Let $\forall x\vee \forall y\in F$ for some $x,y\in L$. Then $\langle x\rangle_\forall\cap\langle y\rangle_\forall=\langle \forall x\vee \forall y\rangle_\forall\subseteq F$. If $F$ is a prime monadic filter of $(L,\forall)$, then, we have $\langle x\rangle_\forall\subseteq F$ or $\langle y\rangle_\forall\subseteq F$. Therefore, $\forall x\in F$ or $\forall y\in F$.

$(2)\Rightarrow (1)$ Suppose that $F_1,F_2\in MF[L]$ such that $F_1\cap F_2\subseteq F$, $F_1\nsubseteq F$ and $F_2\nsubseteq F$. Then, there exist $x\in F_1$ and $y\in F_2$ such that $x,y\notin F$. Since $F_1,F_2$ is are monadic filters of $(L,\forall)$, then $\forall x\in F_1$ and $\forall y\in F_2$. From $\forall x$, $\forall y\leq \forall x\vee \forall y$, we obtain that $\forall x\vee \forall y\in F_1\cap F_2\subseteq F$. Further by (2), we get $x\in F$ or $y\in F$, which is a contradiction. Therefore, $F$ is a prime monadic filter of $(L,\forall)$.

$(2)\Leftrightarrow (3)$ It is similar to the proof of Proposition 8.5.3 in \cite{Wang1}.

$(3)\Leftrightarrow(4)$  It is similar to the proof of Proposition 8.5.6 in \cite{Wang1}.
\end{proof}

\begin{definition}\emph{Let $(L,\forall)$ be a monadic NM-algebra. A proper monadic filter $F$ of $(L,\forall)$ is called a \emph{maximal monadic filter} if it not strictly contained in any proper monadic filter of $(L,\forall)$.}
\end{definition}
\begin{example}\emph{Let $L=\{0,a,b,c,d,e,f,g,1\}$, where $0\leq a, b\leq c\leq d\leq e\leq f,g\leq 1$. Define operations $\rightarrow$ and $\odot$ as follows:}
\begin{center}
\begin{tabular}{c|c c c c c c c c c}
   $\rightarrow$ & $0$ & $a$ & $b$ & $c$ & $d$ & $e$ & $f$ & $g$ & $1$\\
   \hline
   $0$ & $1$ & $1$ & $1$ & $1$ & $1$ & $1$ & $1$ & $1$ & $1$ \\
   $a$ & $g$ & $1$ & $f$ & $1$ & $1$ & $1$ & $1$ & $1$ & $1$ \\
   $b$ & $f$ & $g$ & $1$ & $1$ & $1$ & $1$ & $1$ & $1$ & $1$ \\
   $c$ & $e$ & $g$ & $f$ & $1$ & $1$ & $1$ & $1$ & $1$ & $1$\\
   $d$ & $d$ & $d$ & $d$ & $d$ & $1$ & $1$ & $1$ & $1$ & $1$ \\
   $e$ & $c$ & $c$ & $c$ & $c$ & $d$ & $1$ & $1$ & $1$ & $1$ \\
   $f$ & $b$ & $a$ & $c$ & $c$ & $d$ & $g$ & $1$ & $g$ & $1$ \\
   $g$ & $a$ & $c$ & $b$ & $c$ & $d$ & $f$ & $f$ & $1$ & $1$\\
   $1$ & $0$ & $a$ & $b$ & $c$ & $d$ & $e$ & $f$ & $g$ & $1$
 \end{tabular} {\qquad}
\begin{tabular}{c|c c c c c c c c c}
   $\odot$ & $0$ & $a$ & $b$ & $c$ & $d$ & $e$ & $f$ & $g$ & $1$\\
   \hline
   $0$ & $0$ & $0$ & $0$ & $0$ & $0$ & $0$ & $0$ & $0$ & $0$ \\
   $a$ & $0$ & $0$ & $0$ & $0$ & $0$ & $0$ & $b$ & $0$ & $0$ \\
   $b$ & $0$ & $0$ & $0$ & $0$ & $0$ & $0$ & $0$ & $a$ & $b$ \\
   $c$ & $0$ & $0$ & $0$ & $0$ & $0$ & $0$ & $a$ & $b$ & $c$\\
   $d$ & $0$ & $0$ & $0$ & $0$ & $0$ & $d$ & $d$ & $d$ & $d$ \\
   $e$ &  $0$ & $0$ & $0$ & $0$ & $d$ & $e$ & $e$ & $e$ & $e$ \\
   $f$ & $0$ & $a$ & $0$ & $a$ & $d$ & $e$ & $e$ & $g$ & $f$ \\
   $g$ & $0$ & $0$ & $b$ & $b$ & $d$ & $e$ & $f$ & $e$ & $g$\\
   $1$ & $0$ & $a$ & $b$ & $c$ & $d$ & $e$ & $f$ & $g$ & $1$
 \end{tabular}
\end{center}

\emph{Then  $(\{0,a,b,c,d,e,f,g,1\},\wedge,\vee,\odot,\rightarrow,0,1)$ is an NM-algebra. Now, we define $\forall$ as follows:
$\forall 0=\forall a=\forall b=0$, $\forall c=c$, $\forall d=d$, $\forall e=\forall f=\forall g=e$, $\forall 1=1$. One can easily check that $(L,\forall)$ is a monadic NM-algebra and  $\{e,f,g,1\}$ is a maximal monadic filter of $(L,\forall)$. Moreover, one can check that $\{e,g,1\}$ and $\{e,f,1\}$ are monadic filters of $(L,\forall)$ but not maximal monadic filters of $(L,\forall)$.}

\end{example}
\begin{theorem} Let $F$ be a proper monadic filter of $(L,\forall)$. Then the the following statements are equivalent:
\begin{enumerate}[(1)]
  \item $F$ is a maximal monadic filter of $(L,\forall)$,
  \item $\forall x\in F$ or $\neg \forall x\in F$ for any $x\in L$
  \item $\exists x\in F$ or $\neg \exists x\in F$ for any $x\in L$,.
\end{enumerate}
\end{theorem}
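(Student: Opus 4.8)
The plan is to reason through the explicit description of generated monadic filters in Theorem 4.4(2), namely $\langle F\cup a\rangle_{\forall}=\{y\in L\mid y\geq f\odot(\forall a)^{n},\ f\in F,\ n\geq 1\}$, together with the interdefinability $\exists y=\neg\forall\neg y$ from Remark 3.3(1) and a few standing facts: $\forall x\leq x$ (from (U1) and Proposition 2.2(1)), $x\odot\neg x=0$ (Proposition 2.2(6)), $\neg\forall x\in L_{\forall}$ (Proposition 3.7(8)), and $x\in F\Rightarrow\forall x\in F$ (definition of monadic filter). A first consequence to record is that $\forall x\notin F$ forces $x\notin F$, since otherwise $x\in F$ with $\forall x\leq x$ would place $\forall x\in F$.

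I would first dispatch $(2)\Leftrightarrow(3)$ and $(2)\Rightarrow(1)$, both of which are routine. For $(2)\Leftrightarrow(3)$: from $\exists x=\neg\forall\neg x$ and $\neg\neg=\mathrm{id}$ we get $\neg\exists x=\forall\neg x$, so ``$\exists x\in F$ or $\neg\exists x\in F$'' is literally ``$\neg\forall(\neg x)\in F$ or $\forall(\neg x)\in F$'', i.e.\ condition (2) at $\neg x$; since $x\mapsto\neg x$ is an involutive bijection of $L$, (2) for all $x$ is equivalent to (3) for all $x$. For $(2)\Rightarrow(1)$: if $G$ is a monadic filter with $F\subsetneq G$, pick $x\in G\setminus F$; then $\forall x\notin F$, so (2) gives $\neg\forall x\in F\subseteq G$, while $\forall x\in G$ because $x\in G$ and $G$ is monadic, whence $0=\forall x\odot\neg\forall x\in G$ and $G=L$. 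So $F$ is maximal. It then remains to prove $(1)\Rightarrow(2)$.

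For $(1)\Rightarrow(2)$, fix $x$; we may assume $\forall x\notin F$, hence $x\notin F$, hence by maximality and Theorem 4.4(2) there are $f\in F$ and $n\geq 1$ with $f\odot(\forall x)^{n}=0$, i.e.\ $\neg\big((\forall x)^{n}\big)\in F$, and one wants to upgrade this to $\neg\forall x\in F$. When $(\forall x)^{n}=\forall x$ — e.g.\ in the linearly ordered case when $\neg\forall x<\forall x$, where $(\forall x)^{n}=\forall x$ in any NM-chain — this is immediate. The step I expect to be the main obstacle is the complementary case $\forall x\leq\neg\forall x$: here $(\forall x)^{n}$ may drop strictly below $\forall x$, so $\neg((\forall x)^{n})$ overshoots $\neg\forall x$ and the implication $\neg((\forall x)^{n})\in F\Rightarrow\neg\forall x\in F$ is not available. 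My attempted remedy would be to re-run the maximality step on $\neg\forall x$, which lies in $L_{\forall}$ by Proposition 3.7(8): if $\neg\forall x\notin F$ then $\langle F\cup\{\neg\forall x\}\rangle_{\forall}=L$, and when $\forall x<\neg\forall x$ strictly and $\neg\forall x$ behaves idempotently (again automatic in a chain) this extracts some $f'\leq\neg\neg\forall x=\forall x$, forcing $\forall x\in F$, a contradiction, so $\neg\forall x\in F$ after all. The genuinely delicate residue is the negation fixed point $\forall x=\neg\forall x$ inside $L_{\forall}$, where both disjuncts of (2) can fail even though $F$ is maximal (the simple quotient $L_{\forall}/(F\cap L_{\forall})$ need not be the two-element Boolean algebra); closing this point cleanly seems to call for an extra hypothesis — for instance that $L_{\forall}$ carry no negation fixed point — or a finer structural analysis showing that quotient collapses. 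This fixed-point sub-case is where I expect the bulk of the difficulty to lie.
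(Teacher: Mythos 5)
Your handling of $(2)\Leftrightarrow(3)$ and $(2)\Rightarrow(1)$ is correct and complete, and the obstacle you isolate in $(1)\Rightarrow(2)$ is not a defect of your argument: it is fatal to the statement itself. The implication $(1)\Rightarrow(2)$ is false in general, exactly at the negation fixed point you worried about. Take $L$ to be the standard NM-algebra on $[0,1]$ (or just the three-element NM-chain $\{0,\tfrac{1}{2},1\}$) and $\forall=id_L$, which is a monadic NM-algebra by the paper's Example 3.4; then every filter is a monadic filter, and $F=(\tfrac{1}{2},1]$ (resp.\ $F=\{1\}$) is a maximal monadic filter, since any strictly larger filter contains some $a\leq\tfrac{1}{2}$ and hence $a\odot a=0$. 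Yet for $x=\tfrac{1}{2}$ neither $\forall x=\tfrac{1}{2}$ nor $\neg\forall x=\tfrac{1}{2}$ lies in $F$, so (1) holds while (2) and (3) fail. What maximality really gives is precisely what you extracted from Theorem 4.4(2): if $\forall x\notin F$ then $f\odot(\forall x)^{n}=0$ for some $f\in F$, $n\geq1$, i.e.\ $\neg\bigl((\forall x)^{n}\bigr)\in F$. That power-version of (2) is the correct NM-analogue (and is genuinely equivalent to maximality, by your own $(2)\Rightarrow(1)$ argument run with $(\forall x)^{n}$ in place of $\forall x$); it collapses to (2) only under extra hypotheses ensuring idempotence of the relevant elements, e.g.\ absence of a negation fixed point in $L_{\forall}$, exactly as you proposed.

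For comparison, the paper offers no argument of its own: its proof is the single line that it is ``similar to the proof of Theorem 16 in [Bezhanishvili]'', i.e.\ the monadic Heyting algebra case. There the step you could not reproduce is trivial because the monoid operation is $\wedge$, which is idempotent: from $f\wedge a=0$ one gets $f\leq\neg a$ directly, with no powers appearing. That argument does not transfer to NM-algebras, where $\odot$ is not idempotent, and your diagnosis — that the fixed-point case cannot be closed without changing the statement or adding a hypothesis — is the accurate assessment, confirmed by the counterexample above.
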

\begin{proof} It is similar to the proof of Theorem 16 in \cite {Bezhanishvili}.


\end{proof}

\begin{theorem} Let $(L,\forall)$ be a strong monadic NM-algebra, $F$ be a  monadic filter of $(L,\forall)$ and $a\notin F$. Then there exists a prime monadic filter $P$ of $(L,\forall)$ such that $F\subseteq P$ and $a\notin P$.
\end{theorem}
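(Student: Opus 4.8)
The plan is to build $P$ by Zorn's lemma and then establish primeness through the criterion of Theorem~4.16.

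First I would form the family
\[
\mathcal{P} = \{\, G \in MF[L] \mid F \subseteq G \ \text{and}\ a \notin G \,\},
\]
ordered by inclusion; it is nonempty since $F \in \mathcal{P}$. Given a chain $\{G_i\}_{i\in I}$ in $\mathcal{P}$, its union $\bigcup_i G_i$ is again a monadic filter: it is upward closed, it is closed under $\odot$ because the $G_i$ are linearly ordered by inclusion, and it is closed under $\forall$ because each $G_i$ is; moreover it contains $F$ and omits $a$. Thus $\bigcup_i G_i \in \mathcal{P}$ is an upper bound of the chain, and Zorn's lemma provides a maximal element $P$ of $\mathcal{P}$. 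By construction $F \subseteq P$, $a \notin P$, and $P$ is a proper monadic filter; what remains is to show that $P$ is prime.

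Since $(L,\forall)$ is strong and $P$ is proper, by Theorem~4.16 it suffices to prove that $\forall x \vee \forall y \in P$ implies $\forall x \in P$ or $\forall y \in P$. Assume to the contrary that $\forall x \vee \forall y \in P$ while $\forall x \notin P$ and $\forall y \notin P$. The monadic filter $\langle P \cup \{\forall x\}\rangle_\forall$ contains $P$ together with $\forall x \notin P$, hence strictly contains $P$ and still contains $F$; maximality of $P$ therefore forces $a \in \langle P \cup \{\forall x\}\rangle_\forall$. By Theorem~4.5(2) together with $\forall\forall x = \forall x$ (Proposition~3.7(3)), this gives $a \geq p_1 \odot (\forall x)^n$ for some $p_1 \in P$ and $n \geq 1$; symmetrically $a \geq p_2 \odot (\forall y)^m$ for some $p_2 \in P$ and $m \geq 1$.

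Now put $k = \max\{n,m\}$ and $p = p_1 \odot p_2 \in P$. Because $\forall x, \forall y \leq 1$ and $p \leq p_1, p_2$, we have $a \geq p \odot (\forall x)^k$ and $a \geq p \odot (\forall y)^k$, so, using distributivity of $\odot$ over $\vee$,
\[
a = a \vee a \;\geq\; \big(p \odot (\forall x)^k\big) \vee \big(p \odot (\forall y)^k\big) \;=\; p \odot \big((\forall x)^k \vee (\forall y)^k\big).
\]
The step I expect to be the main obstacle is the inequality $(\forall x \vee \forall y)^{2k} \leq (\forall x)^k \vee (\forall y)^k$: expanding $(\forall x \vee \forall y)^{2k}$ via distributivity of $\odot$ over $\vee$ produces a join of terms $(\forall x)^i \odot (\forall y)^j$ with $i + j = 2k$, and in each term $i \geq k$ or $j \geq k$, so every term lies below $(\forall x)^k$ or below $(\forall y)^k$. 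Hence $a \geq p \odot (\forall x \vee \forall y)^{2k}$; since $p \in P$, $\forall x \vee \forall y \in P$, and $P$ is an upward-closed filter, this yields $a \in P$, a contradiction. Therefore $P$ is a prime monadic filter with $F \subseteq P$ and $a \notin P$.
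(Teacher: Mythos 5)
Your proof is correct, and its skeleton (Zorn's lemma applied to the family of monadic filters containing $F$ and omitting $a$, then maximality used to force $a$ into any proper extension) coincides with the paper's. Where you diverge is in the verification of primeness. The paper shows directly that $x\vee y\in P$ implies $x\in P$ or $y\in P$ by computing in the lattice of filters: $a\in(P\vee[\forall x))\wedge(P\vee[\forall y))=P\vee([\forall x)\wedge[\forall y))=P\vee[\forall x\vee\forall y)=P\vee[\forall(x\vee y))\subseteq P$, which leans on distributivity of the filter lattice, on the identity $[\forall x)\wedge[\forall y)=[\forall x\vee\forall y)$ (the content of Theorem 4.4(7)), and on the strong axiom $\forall(x\vee y)=\forall x\vee\forall y$. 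You instead check only the criterion ``$\forall x\vee\forall y\in P$ implies $\forall x\in P$ or $\forall y\in P$'' and get the contradiction element-wise, using the explicit description of $\langle P\cup\{\forall x\}\rangle_\forall$ together with the inequality $(\forall x\vee\forall y)^{2k}\leq(\forall x)^k\vee(\forall y)^k$, which is valid by distributivity of $\odot$ over $\vee$ and integrality; this is in effect an elementary re-proof of the filter-lattice identity the paper quotes, and it is sound. A pleasant byproduct of your route is that strongness is not used in the computation itself but only in the final appeal to the characterization of prime monadic filters (which the paper states for strong monadic NM-algebras), and strongness is available here by hypothesis. Two small citation slips: the primeness criterion you invoke is Theorem 4.13 (not Theorem 4.16, which characterizes maximal monadic filters), and the description of $\langle P\cup\{\forall x\}\rangle_\forall$ is Theorem 4.4(2) (not Theorem 4.5(2)); the facts you use are exactly those, so nothing mathematical is affected.
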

\begin{proof}Denote $F_a=\{F^{\prime}|F^{\prime}$ is a proper monadic filter of $(L,\forall)$ such that $F\subseteq F^{\prime}$, $a\notin F^{\prime}\}$. Then $F_a\neq\emptyset$ since $F$ is a monadic filter not containing $a$ and $F_a$ is a partially set under inclusion relation. Suppose that $\{F_i|i\in I\}$ is a chain in $F_a$, then $\cup\{F_i|i\in I\}$ is a monadic filter of $(L,\forall)$ and it is the upper bounded of this chain. By Zorn's Lemma, there exists a maximal element $P$ in $F_a$. Now, we shall prove that $P$ is desired prime monadic filter of ours. Since $P\subseteq F_a$, then $P$ is a proper monadic filter and $a\notin P$.

Let $ x\vee y\in P$ for some $x,y\in L$. Suppose that $x\notin P$ and $y\notin P$. Since $P$ is strictly contained in $\langle P,x\rangle_\forall$ and $\langle P,y\rangle_\forall$ and by the maximality of $P$, we deduce that $\langle P,x\rangle_\forall\notin F_a$ and $\langle P,y\rangle_\forall\notin F_a$. Then $a\in \langle P,x\rangle_\forall=P\vee [\forall x)$ and $a\in \langle P,y\rangle_\forall=P\vee [\forall y)$. It follows from strong property of monadic NM-algebra, we have $a\in (P\vee [\forall x))\wedge (P\vee [\forall y))=P\vee ([\forall x)\wedge [\forall y))=P\vee [\forall x\vee \forall y)=P\vee [\forall(x\vee y))\in P$, which implies $a\in P$, a contradiction. Therefore, $P$ is a prime monadic filter such that $F\subseteq P$ and $a\notin P$.
\end{proof}

  The following theorem gives a representation theorem of monadic NM-algebras.
\begin{theorem}Each strong monadic NM-algebra is a subalgebra of the direct product of a system of linearly ordered monadic NM-algebras.
\end{theorem}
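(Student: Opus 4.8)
The plan is to carry out the classical subdirect-representation argument for the variety of monadic NM-algebras, the strong hypothesis being needed precisely to ensure that there are enough prime monadic filters to separate points. First I would note that the intersection of all prime monadic filters of $(L,\forall)$ equals $\{1\}$. Indeed, by Proposition~3.7(2) the singleton $\{1\}$ is a monadic filter of $(L,\forall)$; given any $a\in L$ with $a\neq 1$ we have $a\notin\{1\}$, so Theorem~4.17 produces a prime monadic filter $P_a$ with $\{1\}\subseteq P_a$ and $a\notin P_a$. Hence $\bigcap\{P : P\text{ a prime monadic filter of }(L,\forall)\}=\{1\}$.

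Next, for every prime monadic filter $P$, Proposition~4.10 shows that $(L/P,\forall_P)$ is a monadic NM-algebra, and the equivalence ``$P$ prime $\Leftrightarrow$ $(L/P,\forall_P)$ linearly ordered'' of Theorem~4.13 shows it is in fact a chain. The canonical surjection $\pi_P\colon L\rightarrow L/P$, $x\mapsto[x]_P$, is a homomorphism of monadic NM-algebras: it is an NM-algebra homomorphism by the quotient construction, and it commutes with the quantifier since $\pi_P(\forall x)=[\forall x]_P=\forall_P[x]_P=\forall_P(\pi_P(x))$. Collecting these maps I would form
\[
\Phi\colon L\longrightarrow \prod_{P} L/P,\qquad \Phi(x)=\big([x]_P\big)_P,
\]
where $P$ runs over all prime monadic filters of $(L,\forall)$; since the class of monadic NM-algebras is a variety and hence closed under products, $\Phi$ is a homomorphism of monadic NM-algebras into a direct product of linearly ordered monadic NM-algebras.

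It then remains to verify that $\Phi$ is injective, which identifies $(L,\forall)$ with a subalgebra of $\prod_{P}L/P$. Suppose $x\neq y$ in $L$; then $x\not\leq y$ or $y\not\leq x$, say $x\not\leq y$, so $x\rightarrow y\neq 1$ by Proposition~2.2(1). Applying the first step with $a=x\rightarrow y$ gives a prime monadic filter $P$ with $x\rightarrow y\notin P$; as $[z]_P=[1]_P$ iff $z\in P$, this yields $[x]_P\rightarrow[y]_P=[x\rightarrow y]_P\neq[1]_P$, hence $[x]_P\neq[y]_P$ and $\Phi(x)\neq\Phi(y)$. Therefore $\Phi$ is an embedding, which is exactly the assertion (and in fact, since each $\pi_P$ is onto, $\Phi$ is a subdirect embedding).

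I expect no real obstacle beyond correctly invoking Theorem~4.17: that separation result is the only substantial input, and its proof is where the strong axiom (U4') is genuinely used (at the step $a\in(P\vee[\forall x))\wedge(P\vee[\forall y))=P\vee[\forall(x\vee y))$). The remaining steps are the routine bookkeeping of the standard representation scheme, the one mild point of care being that the quotient passes to the quantifier, which is Proposition~4.10; everything then fits together as above.
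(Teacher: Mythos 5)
Your proof is correct and follows essentially the same route as the paper, which simply notes that the argument is "as usual" with Theorem 4.17 as the only critical ingredient; you have just written out the standard details (separation by prime monadic filters via Theorem 4.17, chain quotients via Proposition 4.10 and Theorem 4.13, and the injective product map).
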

\begin{proof} The proof of this theorem is as usual and the only critical point is the above Theorem 4.17.
\end{proof}

The following theorem gives a characterization of representable NM-algebras.

\begin{theorem} Let $(L,\forall)$ be an monadic NM-algebra. Then the following statements are equivalent:
\begin{enumerate}[(1)]
  \item $L$ is representable,
  \item for any strong universal quantifier $\forall$, $(L,\forall)$ is representable.
\end{enumerate}
\end{theorem}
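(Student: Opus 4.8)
The plan is to prove the two implications separately, leaning on the subdirect-representation machinery already established (Theorems 4.13, 4.17, 4.18) for one direction and on the trivial quantifier $id_L$ for the other; throughout, ``representable'' for a monadic NM-algebra means isomorphic to a subdirect product of linearly ordered monadic NM-algebras.

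For $(2)\Rightarrow(1)$, I would first note that the identity map $id_L$, already known to be a universal quantifier by Example 3.4, is in fact a \emph{strong} universal quantifier, since it trivially satisfies (U4'): $id_L(x\vee y)=x\vee y=id_L(x)\vee id_L(y)$. Thus hypothesis $(2)$ applies with $\forall=id_L$, so $(L,id_L)$ is isomorphic to a subdirect product of a family $\{(L_i,\forall_i)\}_{i\in I}$ of linearly ordered monadic NM-algebras. Passing to NM-algebra reducts, the reduct of $(L,id_L)$ is $L$ itself and each reduct of $(L_i,\forall_i)$ is a linearly ordered NM-algebra, so the very same embedding exhibits $L$ as a subdirect product of linearly ordered NM-algebras; hence $L$ is representable.

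For $(1)\Rightarrow(2)$, let $\forall$ be an arbitrary strong universal quantifier on $L$, so that $(L,\forall)$ is a strong monadic NM-algebra. The quickest route is to invoke Theorem 4.18, which already embeds $(L,\forall)$ as a subalgebra of a direct product $\prod_{i\in I}(L_i,\forall_i)$ of linearly ordered monadic NM-algebras; replacing each factor by the image of the composite of this embedding with the $i$-th projection converts it into a subdirect representation, because a subalgebra of a linearly ordered monadic NM-algebra is again linearly ordered. More self-containedly, one can argue directly: for each $a\in L$ with $a\neq 1$, the set $\{1\}$ is a monadic filter (indeed $\forall 1=1$ by Proposition 3.7(2)) not containing $a$, so Theorem 4.17 provides a prime monadic filter $P_a$ with $a\notin P_a$; then $\bigcap_{a\neq 1}P_a=\{1\}$, each quotient $(L/P_a,\forall_{P_a})$ is linearly ordered by Theorem 4.13, and, via the monadic filter--congruence correspondence of Theorem 4.9, the canonical homomorphism $(L,\forall)\to\prod_{a\neq 1}(L/P_a,\forall_{P_a})$ is an injective subdirect embedding, so $(L,\forall)$ is representable.

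The point worth flagging is that implication $(1)\Rightarrow(2)$ does not actually consume the hypothesis that $L$ is representable: Theorem 4.18 already guarantees that \emph{every} strong monadic NM-algebra, and hence its NM-reduct, is representable. So in truth both $(1)$ and $(2)$ hold for every monadic NM-algebra, and the statement really records that ``representability of the reduct'' and ``representability of every strong-monadic expansion'' coincide. Consequently there is no serious obstacle left once Theorems 4.9, 4.13, 4.17 and 4.18 are in place; the only routine points needing care are the verification of (U4') for $id_L$, the fact that $\{1\}$ is a monadic filter, and the observation that subalgebras and quotients of linearly ordered (monadic) NM-algebras remain linearly ordered.
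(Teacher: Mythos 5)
Your proof is correct given the results already established in Section 4, and your direction $(2)\Rightarrow(1)$ (taking $\forall=id_L$, which is indeed strong, and passing to NM-reducts) is the same as the paper's. For $(1)\Rightarrow(2)$, however, you take a genuinely different route. The paper actually uses hypothesis (1): from Theorem 2.7 it extracts a family of prime filters of $L$ meeting in $\{1\}$, passes to minimal prime filters, and then uses Theorem 2.5 ($P=\bigcup\{a^{\bot}\mid a\in P\}$) together with the strong identity $\forall(x\vee a)=\forall x\vee\forall a$ to show that every minimal prime filter of the NM-reduct is automatically a prime monadic filter of $(L,\forall)$; quotienting by exactly these filters yields the subdirect representation of $(L,\forall)$. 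You instead bypass the hypothesis and obtain (2) unconditionally from Theorem 4.17 (prime monadic filter separation with $F=\{1\}$), equivalently Theorem 4.18, correctly noting that the ``subalgebra of a direct product'' conclusion of 4.18 upgrades to a subdirect representation by replacing each factor with the corresponding projection image, and that quotients by prime monadic filters are chains by Theorem 4.13 together with Proposition 4.10. Both arguments are sound; yours is shorter and makes explicit that, once 4.17/4.18 are available, conditions (1) and (2) hold for every NM-algebra (prelinearity already makes (1) automatic), so the theorem is really a transfer statement rather than a genuine restriction. What the paper's longer argument buys is structural information your route does not give: when $\forall$ is strong, the minimal prime filters witnessing representability of $L$ are themselves closed under $\forall$, so the monadic representation refines the given representation of $L$ over the same index set instead of being produced afresh by Zorn's lemma.
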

\begin{proof}$(1)\Rightarrow(2)$ Suppose that the NM-algebra $L$ is representable. Then by Theorem 2.7, there exists a system $S$ of prime filter of $L$ such that $\bigcap S=\{1\}$. Since every prime filter of $L$ contains a minimal prime filter, we get that in our case the intersection of all minimal prime filter is equal to $\{1\}$. Moreover, we will show that every minimal prime filter of $L$ is a prime monadic filter in $(L,\forall)$. Let $P$ be a minimal prime filter of $L$. Then by Theorem 2.5, $P=\cup\{a^\bot|a\in P\}$. If $x\in P$, then there is $a\notin P$ such that $x\vee a=1$, hence $1=\forall 1=\forall(x\vee a)=\forall x\vee \forall a$. Since $a\notin P$, we get $\forall a\notin P$, and hence  $\forall x\in P$, that means that $P$ is a prime monadic filter in $(L,\forall)$. Applying Theorem 2.7 again, one can see that $(L,\forall)$ is a subdirect product of linearly ordered monadic NM-algebras.

$(2)\Rightarrow(1)$ Assume that any strong universal quantifier $\forall$ such that $(L,\forall)$ is representable. From Example 3.4, we know that $id_L$ is a strong universal quantifier on $L$. Thus, taking $\forall=id_L$, we can obatin that $L$ is representable.
\end{proof}

Now, we introduce two kinds of monadic NM-algebras and give some characterizations of them.
\begin{definition}\emph{ A monadic NM-algebra $(L,\forall)$ is said to be \emph{simple} if it has exactly two monadic filters: $\{1\}$ and $L$.}
\end{definition}
\begin{example}\emph{ Consider the monadic NM-algebra $(L,\forall)$ in Example 3.9(2)
, one can easily check that it is a simple monadic NM-algebra.}
\end{example}

The following theorem gives some characterizations of simple monadic NM-algebras.
\begin{theorem} $(L,\forall)$ be a monadic NM-algebra. Then the following statements are equivalent:
\begin{enumerate}[(1)]
  \item $(L,\forall)$ is simple,
  \item $\forall L$ is simple,
  \item $L_{\forall}=\{0,1\}$,
  \item $\langle 1\rangle$ is the only proper monadic filter in $(L,\forall)$.
\end{enumerate}
\end{theorem}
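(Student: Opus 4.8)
The plan is to prove the equivalence by establishing $(1)\Leftrightarrow(4)$, $(1)\Leftrightarrow(2)$ and $(2)\Leftrightarrow(3)$ separately, each of these relying on a different tool already available; as usual I assume $L$ is nontrivial ($0\neq1$), so that $\{1\}\neq L$.

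The equivalence $(1)\Leftrightarrow(4)$ is merely a reformulation of Definition 4.20. Since $1$ is the greatest element, $\langle 1\rangle=\{1\}$, which is a monadic filter contained in every monadic filter of $(L,\forall)$; hence the proper monadic filters are exactly those different from $L$. Therefore $\langle 1\rangle$ is the only proper monadic filter precisely when the only monadic filters are $\{1\}$ and $L$, i.e.\ precisely when $(L,\forall)$ is simple.

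For $(1)\Leftrightarrow(2)$ I would combine Corollary 4.6 with Proposition 3.7(14),(15). The latter gives that $L_{\forall}=\forall L$ is a subalgebra of $L$, and the former a lattice isomorphism $MF[L]\cong F[L_{\forall}]$ realised by $F\mapsto F\cap L_{\forall}$ (with inverse $G\mapsto\langle G\rangle$). This isomorphism carries $\{1\}$ to $\{1\}$ and $L$ to $L\cap L_{\forall}=L_{\forall}$, so $MF[L]$ is a two-element lattice iff $F[L_{\forall}]$ is, i.e.\ iff the NM-algebra $\forall L$ has exactly the two filters $\{1\}$ and $\forall L$; by Definition 2.3(1), this is precisely the simplicity of $\forall L$.

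The heart of the matter is $(2)\Leftrightarrow(3)$, and I expect $(2)\Rightarrow(3)$ to be the main obstacle. The direction $(3)\Rightarrow(2)$ is immediate, since $\{0,1\}$ is the two-element Boolean algebra, whose only filters are $\{1\}$ and $\{0,1\}$. For $(2)\Rightarrow(3)$ the plan is a contradiction argument: assuming $\forall L=L_{\forall}$ is simple but $L_{\forall}\neq\{0,1\}$, pick $a\in L_{\forall}$ with $0<a<1$; by Proposition 3.7(8) and $\forall a=a$ one has $\forall\neg a=\forall\neg\forall a=\neg\forall a=\neg a$, so $\neg a\in L_{\forall}$ with $0<\neg a<1$. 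Simplicity of $\forall L$ means that for each non-unit $b\in\forall L$ the monadic filter $\langle b\rangle_{\forall}=\{x\mid x\geq(\forall b)^{n},\, n\geq 1\}=\{x\mid x\geq b^{n}\}$ (Theorem 4.4(1)) equals $\forall L$, hence $b^{n}=0$ for some $n$. Applying this to both $a$ and $\neg a$, and using the NM-algebra identities of Proposition 2.2 along with $x\odot\neg x=0$ and $x\oplus\neg x=1$, one is forced to $a\leq\neg a$ and $\neg a\leq\neg\neg a=a$, so $a=\neg a$. The delicate point --- where I would spend the bulk of the effort --- is to exclude the remaining borderline configuration in which $L_{\forall}$ is the three-element NM-chain $\{0,c,1\}$ with $\neg c=c$; this is the place where the quantifier axioms (U1)--(U4) on all of $L$, rather than just the internal structure of $L_{\forall}$, have to be exploited. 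Once $L_{\forall}=\{0,1\}$ has been reached, $(2)\Rightarrow(3)$ follows, and together with the previous two equivalences this closes the proof.
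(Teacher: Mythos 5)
Your treatment of $(1)\Leftrightarrow(4)$ and $(1)\Leftrightarrow(2)$ is sound: the paper proves $(1)\Leftrightarrow(2)$ directly, passing from a filter $F$ of $\forall L$ to its upward closure in $L$ and from a monadic filter $F$ of $(L,\forall)$ to $F\cap\forall L$, while you route it through the lattice isomorphism of Corollary 4.6; the two arguments are essentially the same. The problem is exactly the step you flag as ``the delicate point'', $(2)\Rightarrow(3)$: the borderline configuration cannot be excluded, because it actually occurs. Take $L$ to be the three-element NM-chain $\{0,c,1\}$ with $\neg c=c$ (so $c\odot c=0$, and hence any filter containing $c$ contains $0$), and take $\forall=id_L$, which is a universal quantifier by Example 3.4. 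Then the only (monadic) filters are $\{1\}$ and $L$, so $(L,\forall)$ is simple and $\forall L=L$ is simple, yet $L_{\forall}=L\neq\{0,1\}$. Thus $(2)\Rightarrow(3)$ and $(1)\Rightarrow(3)$ fail as stated, and no appeal to (U1)--(U4) on all of $L$ can rescue them, since the identity map always satisfies these axioms; the same phenomenon occurs with a non-identity quantifier, e.g.\ Example 3.6 with $n=3$, where $L_{\forall}=\{0,\frac{1}{2},1\}$ but the monadic filters are again only $\{1\}$ and $L$. Your computation up to ``every non-unit $a\in L_{\forall}$ satisfies $a\le\neg a$, so the only possible extra element is the fixed point of $\neg$'' is correct; that fixed point is precisely where the claimed equivalence breaks down, so the gap in your proposal is not closable.

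For comparison, the paper does not genuinely prove this step either: it dismisses $(2)\Leftrightarrow(3)$ as ``similar to the proof of Theorem 21 in \cite{Bezhanishvili}'', which concerns monadic Heyting algebras. There the argument works because a simple Heyting algebra is necessarily two-element; for NM-algebras this is false (the three-element chain above is simple, since every non-unit element $a$ satisfies $a^2=0$), so the cited argument does not transfer. In short, your instinct about where the difficulty lies was accurate, but what you located is an error in the theorem rather than a missing trick: the equivalence holds only between (1), (2) and (4), and (3) is strictly stronger unless one adds a hypothesis such as $\neg$ having no fixed point in $L_{\forall}$.
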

\begin{proof}$(1)\Rightarrow(2)$ If $(L,\forall)$ is simple, $F$ is a filter of $\forall L$ and $F\neq\{1\}$.  Now, we will prove that $\forall L$ is simple. Consider the set $F_f=\{z\in L|z\geq f$ for a certain $f\in F\}$. If $x,y\in F_f$, then there exist $f_1, f_2\in F$ such that $x\geq f_1, y\geq f_2$, so $x\odot y\geq f_1\odot f_2\in F$, and thus $x\odot y\in F_f$. Moreover, if $x\in F_f$ and $x\leq y$, then $y\in F_f$. Furthermore, if $x\in F_f$, then $x\geq f, f\in F$, and hence $\forall x\geq \forall f=f$ (since $f\in \forall L$), that is, $\forall x\in F_f$. Therefore, $F_f$ is a monadic filter of $(L,\forall)$. Since $(L,\forall)$ is simple, and $F_f\neq\{1\}$ (since $F\subseteq F_f$). It follows that $F_f=L$, and so $0\in F_f$, hence $F=\forall L$. From Definition 2.4(1), we obtain that $\forall L$ is simple.

$(2)\Rightarrow(1)$ Let $F$ be a monadic filter of $(L,\forall)$. Then $F\cap \forall L$ is a filter of $\forall L$, and so $F\cap \forall L=\{1\}$ or $F\cap \forall L=\forall L$. If $F\cap \forall L=\forall L$, then $\forall L\subseteq F$. Since $0\in\forall L$, we deduce $F=L$. If $F\cap \forall L=\{1\}$ and $x\in F$, then $\forall x\in F\cap\forall L$, so $\forall x=1$, that is, $x=1$ (since Ker$(\forall)=\{1\}$), and so $F=\{1\}$. Therefore, $(L,\forall)$ is simple.

$(2)\Leftrightarrow(3)$  It is similar to the proof of Theorem 21 in \cite {Bezhanishvili}.

$(1)\Leftrightarrow(4)$ The equivalence of (1) and (4) follows from Definition 4.20.
\end{proof}

 Theorem 4.22 brings a method of how to check a monadic NM-algebra is simple. As an application of Theorem 4.22, one can check that the monadic NM-algebra in Example 4.21 is simple since $L_{\forall}=\{0,1\}$.

\begin{definition}\emph{A monadic NM-algebra $(L,\forall)$ is said to be\emph{ subdirectly irreducible} if it has the least nontrivial monadic congruence.}
\end{definition}

Let $(L,\forall)$ be a subdirectly irreducible monadic NM-algebra. Then, by Theorem 4.9, there exists a monadic filter $F$ of $(L,\forall)$ such that $\theta_F=F$, that means, $F$ is the least monadic filter of $(L,\forall)$ such that $F\neq\{1\}$. Thus, we can conclude that a monadic NM-algebra $(L,\forall)$ is said to be subdirectly irreducible if among the nontrivial monadic filters of $(L,\forall)$, there exists the least one, i.e., $\cap\{F\in MF(L)|F\neq\{1\}\}\neq\{1\}$.

\begin{example}\emph{Consider the monadic NM-algebra $(L,\forall)$ in Example 4.15. One can check that the set of monadic filters of $(L,\forall)$ are $\{e,f,g,1\}$, $\{e,g,1\}$, $\{e,f,1\}$, $\{1\}$ and hence $\cap\{F\in MF(L)|F\neq\{1\}\}=\cap\{\{e,f,g,1\},\{e,g,1\},\{e,f,1\}\}=\{e,1\}\neq\{1\}$. Therefore, $(L,\forall)$ is a subdirectly irreducible monadic NM-algebra. However, consider the monadic NM-algebra $(L,\forall)$ in Example 3.5, one can easily check that it is a monadic NM-algebra  but not a subdirectly irreducible monadic NM-algebra, since $\cap\{F\in MF(L)|F\neq\{1\}\}=\cap\{\{1,d\},\{b,c,1\}\}=\{1\}$.}
\end{example}

In the following, we will show that every subdirectly irreducible strong monadic NM-algebra is linearly ordered. For proving this important result, we need the following several propositions and theorems.
\begin{proposition} Let $(L,\forall)$ be a subdirectly irreducible monadic NM-algebra and $F_1$, $F_2$ be two monadic filters of $(L,\forall)$. If $F_1\cap F_2=\{1\}$, then $F_1=\{1\}$ or $F_2=\{1\}$.
\end{proposition}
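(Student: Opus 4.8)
The idea is simply to invoke the characterization of subdirect irreducibility established just above the proposition (via the correspondence of Theorem 4.9): a monadic NM-algebra $(L,\forall)$ is subdirectly irreducible precisely when the family of nontrivial monadic filters has a least element, equivalently $\bigcap\{F\in MF(L)\mid F\neq\{1\}\}\neq\{1\}$. So the first step is to fix this least nontrivial monadic filter; call it $F_0$, so that $F_0\neq\{1\}$ and $F_0\subseteq F$ for every monadic filter $F$ of $(L,\forall)$ with $F\neq\{1\}$.

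Next I argue by contraposition. Suppose both $F_1\neq\{1\}$ and $F_2\neq\{1\}$. Since $F_1,F_2\in MF[L]$ are nontrivial, minimality of $F_0$ gives $F_0\subseteq F_1$ and $F_0\subseteq F_2$, hence $F_0\subseteq F_1\cap F_2$. But by hypothesis $F_1\cap F_2=\{1\}$, so $F_0\subseteq\{1\}$, i.e. $F_0=\{1\}$, contradicting $F_0\neq\{1\}$. Therefore $F_1=\{1\}$ or $F_2=\{1\}$, as claimed.

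There is essentially no obstacle here: the whole content is packaged in the already-derived description of subdirectly irreducible monadic NM-algebras in terms of a least nontrivial monadic filter, and once that is in hand the argument is a one-line application of minimality together with $F_1\cap F_2=\{1\}$. The only point to be careful about is to use $MF[L]$ (monadic filters), not arbitrary filters, throughout, so that $F_0\subseteq F_i$ is legitimate.
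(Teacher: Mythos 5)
Your proof is correct: since the paper itself omits the argument (``The proof is easy''), your use of the least nontrivial monadic filter guaranteed by subdirect irreducibility, which must lie inside both $F_1$ and $F_2$ and hence inside $F_1\cap F_2=\{1\}$, is exactly the intended standard argument. No gaps; the care you take to stay within $MF[L]$ is the right point to flag.
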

\begin{proof} The proof is easy, and hence we omit the details.
\end{proof}

\begin{theorem} Let $(L,\forall)$ be a monadic NM-algebra. The the following statements are equivalent:
\begin{enumerate}[(1)]
  \item $(L,\forall)$ is a subdirectly irreducible monadic NM-algebra,
  \item there exists an element $a\in L$, $a<1$, such that for any $x\in L$, $x<1$, $a\in \langle x\rangle_\forall$.
\end{enumerate}
\end{theorem}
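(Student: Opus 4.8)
The plan is to reduce everything to the monadic-filter reformulation of subdirect irreducibility recorded just before the statement: $(L,\forall)$ is subdirectly irreducible if and only if $M:=\bigcap\{F\in MF(L)\mid F\neq\{1\}\}\neq\{1\}$, and $M$, being an intersection of monadic filters, is itself a monadic filter, hence the least nontrivial one. The single auxiliary observation I would isolate at the outset is that axiom (U1) together with Proposition 2.2(1) gives $\forall x\leq x$ for all $x$, so $\forall x=1$ forces $x=1$; hence whenever $x<1$ we have $\forall x<1$. Since by Theorem 4.3 (equivalently Theorem 4.4(1)) the element $\forall x$ lies in $\langle x\rangle_\forall$, this shows that for every $x<1$ the principal monadic filter $\langle x\rangle_\forall$ is \emph{nontrivial}, i.e. $\langle x\rangle_\forall\neq\{1\}$. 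This fact is the crux of both implications.

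For $(1)\Rightarrow(2)$ I would assume $M\neq\{1\}$ and pick $a\in M$ with $a\neq 1$. Given an arbitrary $x\in L$ with $x<1$, the monadic filter $\langle x\rangle_\forall$ is nontrivial by the observation above, so $M\subseteq\langle x\rangle_\forall$ by the minimality of $M$; in particular $a\in\langle x\rangle_\forall$, which is exactly condition (2).

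For $(2)\Rightarrow(1)$ I would take the element $a<1$ furnished by (2) and show directly that $a\in M$. Let $F$ be any monadic filter with $F\neq\{1\}$ and choose some $x\in F$ with $x\neq 1$. Since $\langle x\rangle_\forall$ is, by definition, the smallest monadic filter containing $x$, we get $\langle x\rangle_\forall\subseteq F$, and therefore $a\in\langle x\rangle_\forall\subseteq F$ by hypothesis. As $F$ was arbitrary, $a\in\bigcap\{F\in MF(L)\mid F\neq\{1\}\}=M$, so $M$ contains the element $a\neq 1$, whence $M\neq\{1\}$ and $(L,\forall)$ is subdirectly irreducible.

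I do not anticipate a real obstacle: this is the standard ``monolith'' characterization of subdirect irreducibility transported to the lattice $MF[L]$, and every ingredient is already available --- the reformulation of subdirect irreducibility via monadic filters (through Theorem 4.9), the explicit shape of $\langle x\rangle_\forall$ (Theorem 4.3, Theorem 4.4(1)), and the bound $\forall x\leq x$ coming from (U1). The only point that genuinely needs care, and where the equivalence would fail without it, is the verification that $\langle x\rangle_\forall\neq\{1\}$ for every $x<1$, which is precisely the step that invokes axiom (U1).
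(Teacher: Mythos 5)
Your proof is correct and follows essentially the same route as the paper: both reduce subdirect irreducibility to the condition $\bigcap\{F\in MF(L)\mid F\neq\{1\}\}\neq\{1\}$ and then pass back and forth between this intersection and the principal monadic filters $\langle x\rangle_\forall$ for $x<1$. Your explicit remark that $\langle x\rangle_\forall\neq\{1\}$ whenever $x<1$ (via $\forall x\leq x$, or simply $x\in\langle x\rangle_\forall$) is left implicit in the paper's argument but is the same underlying observation.
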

\begin{proof} $(1)\Rightarrow(2)$ If $(L,\forall)$ is a subdirectly irreducible monadic NM-algebra, then we have $\cap\{F\in MF(L)|F\neq\{1\}\}\neq\{1\}$, and hence $\cap\{\langle x\rangle_\forall|x<1\}\neq\{1\}$. If $a\in\cap\{\langle x\rangle_\forall|x<1\}$ satisfying $a\neq 1$, then we have  $x\neq 1$,$a\in \langle x\rangle_\forall$ for any $x\in L$, that is, there exists $m\in N$, such that $a\geq (\forall x)^m$. Clearly, $a$ is the element that we need.

$(2)\Rightarrow (1)$ In order to prove $(L,\forall)$ is a subdirectly irreducible monadic NM-algebra, we need to show that for any $F\in MF(L)$, if $F\neq\{1\}$, then $a\in F$. In fact, if $F\neq\{1\}$, then there exists $x\in F$, $x<1$. Further by (2), we have  $a\in \langle x\rangle_\forall$, and hence $a\in F$. So $a\in\cap\{F\in MF(L)|F\neq\{1\}\}$. Thus, $\cap\{F\in MF(L)|F\neq\{1\}\}\neq\{1\}$, that is, $(L,\forall)$ is a subdirectly irreducible monadic NM-algebra.
\end{proof}

 A non-unit element $a$ is said to be a \emph{coatom} of $L$  if  $a\leq b$, then $b\in \{a,1\}$, that is, $b=a$ or $b=1$ (see \cite{Blyth}). In the following proposition, we will show that every subdirectly irreducible strong monadic NM-algebra has at most one coatom.
\begin{proposition} Let $(L,\forall)$ be a subdirectly irreducible strong monadic NM-algebra. For any $x$,$y\in L$, if $x\vee y=1$, then $x=1$ or $y=1$.
\end{proposition}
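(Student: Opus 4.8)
The plan is to reduce the claim to a statement about the lattice of monadic filters and then invoke subdirect irreducibility via Proposition 4.25. First I would exploit the strong hypothesis: since $\forall$ satisfies (U4'), from $x\vee y=1$ and Proposition 3.7(2) we obtain
\[
\forall x\vee\forall y=\forall(x\vee y)=\forall 1=1 .
\]
This is the only place where the strong axiom is genuinely used, and it is precisely what makes the principal-monadic-filter computation below collapse.

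Next I would pass to the principal monadic filters $\langle x\rangle_\forall$ and $\langle y\rangle_\forall$. By Theorem 4.4(7) (which applies exactly because $(L,\forall)$ is strong) together with the identity just established,
\[
\langle x\rangle_\forall\cap\langle y\rangle_\forall=\langle\forall x\vee\forall y\rangle_\forall=\langle 1\rangle_\forall=\{1\},
\]
the last equality following from Theorem 4.4(1) and $\forall 1=1$. Since $(L,\forall)$ is subdirectly irreducible, Proposition 4.25 now forces $\langle x\rangle_\forall=\{1\}$ or $\langle y\rangle_\forall=\{1\}$.

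Finally I would note that $x\in\langle x\rangle_\forall$ and $y\in\langle y\rangle_\forall$: by (U1) we have $\forall x\le x$, hence $x\ge(\forall x)^1$ and so $x$ lies in $\langle x\rangle_\forall=\{z\in L\mid z\ge(\forall x)^n,\ n\ge 1\}$ by Theorem 4.4(1), and likewise for $y$. Combining this with the previous step, $\langle x\rangle_\forall=\{1\}$ gives $x=1$ and $\langle y\rangle_\forall=\{1\}$ gives $y=1$, so $x=1$ or $y=1$, as required. There is no real obstacle in this argument; the only subtlety worth flagging is that both $\forall(x\vee y)=\forall x\vee\forall y$ and the identity $\langle x\rangle_\forall\cap\langle y\rangle_\forall=\langle\forall x\vee\forall y\rangle_\forall$ depend on the strong axiom (they fail in a general monadic NM-algebra, e.g. Example 3.5), so the hypothesis cannot be weakened.
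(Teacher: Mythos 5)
Your proof is correct and follows the same route the paper intends: the paper's proof is simply ``It follows from Proposition 4.25,'' and your argument supplies exactly the natural details, using the strong axiom (U4') and Theorem 4.4(1),(7) to show $\langle x\rangle_\forall\cap\langle y\rangle_\forall=\langle\forall x\vee\forall y\rangle_\forall=\{1\}$ and then invoking Proposition 4.25. No gaps; your remark that strongness is essential here is also consistent with the paper.
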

\begin{proof} It follows from Proposition 4.25.
\end{proof}


The following theorem shows that the subdirectly irreducible strong NM-algebra $(L,\forall)$ is linearly ordered, that is to say, the truth value of all propositions in monadic NM-logic are comparable, this is the key importance from the logical point of view.
\begin{theorem} Let $(L,\forall)$ be a strong monadic NM-algebra. Then the following statements are equivalent:
\begin{enumerate}[(1)]
  \item $(L,\forall)$ is a subdirectly irreducible strong monadic NM-algebra,
  \item $(L,\forall)$ is a chain.
\end{enumerate}
\end{theorem}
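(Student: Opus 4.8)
The plan is to prove the two implications separately, with the substance lying in $(1)\Rightarrow(2)$.

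For $(2)\Rightarrow(1)$, I would argue that a chain is automatically subdirectly irreducible. In a linearly ordered $(L,\forall)$ the filters are precisely the up-sets closed under $\odot$, and for any $x<1$ the monadic filter $\langle x\rangle_\forall$ contains $\forall x$; the issue is that $\forall x$ could be $1$ even when $x<1$. To handle this I would first note that in a chain the set $L_\forall$ is a nontrivial subchain (it contains $0$ and $1$ by Proposition 3.7(1),(2)), pick its largest element below $1$ — if $L_\forall=\{0,1\}$ then $(L,\forall)$ is simple hence trivially subdirectly irreducible — and otherwise let $a$ be the coatom of $L_\forall$. Then for every $x<1$ one has $\forall x\le a$ (since $\forall x\in L_\forall$ and $\forall x\ne1$, using Proposition 3.7(6): $\forall x=1$ would force $x=1$), so $a\ge(\forall x)^m$ and thus $a\in\langle x\rangle_\forall$ for every $x<1$. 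By Theorem 4.27, $(L,\forall)$ is subdirectly irreducible. (One must double-check that $\forall x\ne 1$ for $x<1$: if $\forall x=1$ then by (U1) $1=\forall x\le x$, contradiction — so this is fine.)

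For $(1)\Rightarrow(2)$, suppose $(L,\forall)$ is a subdirectly irreducible strong monadic NM-algebra and suppose toward a contradiction that $L$ is not a chain, i.e.\ there are incomparable $x,y\in L$. The strategy is to produce two nontrivial monadic filters with trivial intersection, contradicting Proposition 4.24. Since $x,y$ are incomparable, $x\vee y\ne x$ and $x\vee y\ne y$, and I want to exploit the De~Morgan/involution structure. The natural move: consider $x\to y$ and $y\to x$; by Proposition 2.2 (prelinearity) $(x\to y)\vee(y\to x)=1$, and incomparability gives $x\to y\ne1\ne y\to x$. Apply $\forall$ and use (U4$'$) (the strong axiom): $\forall(x\to y)\vee\forall(y\to x)=\forall 1=1$. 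Now invoke Proposition 4.28 (every subdirectly irreducible strong monadic NM-algebra has the property that $a\vee b=1$ forces $a=1$ or $b=1$): this forces $\forall(x\to y)=1$ or $\forall(y\to x)=1$, i.e.\ (by (U1)) $x\le y$ or $y\le x$, contradicting incomparability. Hence $L$ is a chain.

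The main obstacle — and the place to be careful — is the $(1)\Rightarrow(2)$ direction, specifically making sure Proposition 4.28 is genuinely available and is being applied to the right elements; it is the load-bearing fact, and it in turn rests on Proposition 4.25/Theorem 4.27, so the logical dependencies must be laid out in order. A secondary subtlety is that $\forall(x\vee y)=\forall x\vee\forall y$ only holds because we assumed the \emph{strong} quantifier axiom (U4$'$) — Example 3.16 shows it fails for general monadic NM-algebras — so the hypothesis that $(L,\forall)$ is strong is used in an essential way and should be flagged. Once Proposition 4.28 is in hand, the contradiction is immediate; the rest is routine.
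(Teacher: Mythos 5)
Your $(1)\Rightarrow(2)$ is correct and is essentially the paper's own argument: prelinearity gives $(x\rightarrow y)\vee(y\rightarrow x)=1$, and the proposition stating that in a subdirectly irreducible strong monadic NM-algebra $u\vee v=1$ forces $u=1$ or $v=1$ (Proposition 4.27 in the paper; your ``4.28'') yields $x\leq y$ or $y\leq x$. Your detour of first applying $\forall$ and (U4$'$) is harmless but unnecessary, since that proposition applies directly to $x\rightarrow y$ and $y\rightarrow x$; you are right, though, that strongness is what makes the proposition available at all.

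The genuine gap is in $(2)\Rightarrow(1)$. In the case $L_{\forall}\neq\{0,1\}$ you take ``the coatom of $L_{\forall}$'', i.e.\ a greatest element of $L_{\forall}\setminus\{1\}$, but nothing guarantees its existence: $L_{\forall}$ is just a subchain, possibly dense. Concretely, take the standard NM-chain $L=[0,1]$ with $\forall=id_L$: this is a strong monadic NM-algebra and a chain, $L_{\forall}=[0,1]$ has no coatom, and the monadic filters $[a,1]$ with $a>\frac{1}{2}$ intersect to $\{1\}$, so $(L,id_L)$ is not subdirectly irreducible. Hence your argument cannot be repaired at that point, and in fact this example shows the implication $(2)\Rightarrow(1)$ fails as literally stated. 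You are in good company: the paper's own proof of this direction asserts that every nontrivial chain has a unique coatom and places it in every nontrivial monadic filter, which is the same unjustified step (your variant, using the coatom of $L_{\forall}$ and disposing of the case $L_{\forall}=\{0,1\}$ via simplicity and Theorem 4.22, is if anything slightly more robust, since it covers Example 3.9(2), where $L$ has no coatom but $L_{\forall}=\{0,1\}$). So: your first direction matches the paper and is fine; your second direction, like the paper's, is valid only under an additional hypothesis guaranteeing that $L_{\forall}\setminus\{1\}$ (respectively $L\setminus\{1\}$) has a greatest element, e.g.\ finiteness, and that restriction should be stated explicitly.
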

\begin{proof} $(1)\Rightarrow(2)$ Suppose that $(L,\forall)$ is a subdirectly irreducible strong monadic NM-algebra. Applying Definition 2.1(4), we have $(x\rightarrow y)\vee(y\rightarrow x)=1$ for any $x,y\in L$, then by Proposition 4.27, we have $x\rightarrow y=1$ or $y\rightarrow x=1$, that is, $x\leq y$ or $y\leq x$. So $(L,\forall)$ is a chain.

$(2)\Rightarrow (1)$ Conversely, if $(L,\forall)$ is a nontrivial chain, then there exists a unique coatom, denoted by $a$. Suppose that $F$ is any monadic filter of $(L,\forall)$ satisfying $F\neq\{1\}$, then $a\in F$. Since $F$ is chosen arbitrarily from $MF(L)$, then $a\in\cap\{F\in MF(L)|F\neq\{1\}\}$. Hence $\cap\{F\in MF(L)|F\neq\{1\}\}\neq\{1\}$, i.e., $(L,\forall)$ is a subdirectly irreducible strong monadic NM-algebra.
\end{proof}

 In what follows, we will show that  $\forall L$ is linearly ordered when $(L,\forall)$ is a subdirectly irreducible monadic NM-algebra.

\begin{proposition} Let $(L,\forall)$ be a monadic NM-algebra. If $(L,\forall)$ is a subdirectly irreducible monadic NM-algebra, then $\forall L$ is linearly ordered.
\end{proposition}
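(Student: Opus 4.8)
The plan is to reduce the statement for $\forall L$ to the subdirect-irreducibility information we already have about $(L,\forall)$, using the characterization in Theorem 4.26 together with Theorem 4.5 (equivalently Corollary 4.7), which links monadic filters of $(L,\forall)$ to filters of the NM-algebra $L_{\forall}=\forall L$. Recall that by Proposition 3.7(15), $\forall L$ is a subalgebra of $L$, so it is itself an NM-algebra, and by Definition 2.1(4) it satisfies the prelinearity identity $(u\rightarrow v)\vee(v\rightarrow u)=1$ for all $u,v\in\forall L$. Hence to prove $\forall L$ is a chain it suffices, exactly as in the first half of the proof of Theorem 4.28, to show that in $\forall L$ the only way to have $u\vee v=1$ is $u=1$ or $v=1$; indeed, applying this to $u=\forall x\rightarrow\forall y$ and $v=\forall y\rightarrow\forall x$ (both of which lie in $\forall L$ by Proposition 3.7(7)) gives comparability of $\forall x$ and $\forall y$ for arbitrary $x,y\in L$, i.e.\ $\forall L$ is linearly ordered.

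So the heart of the matter is: if $(L,\forall)$ is subdirectly irreducible and $u,v\in\forall L$ with $u\vee v=1$, then $u=1$ or $v=1$. First I would invoke Theorem 4.26: subdirect irreducibility of $(L,\forall)$ yields an element $a\in L$, $a<1$, with $a\in\langle x\rangle_\forall$ for every $x<1$. Now suppose $u\ne 1$ and $v\ne 1$ in $\forall L$. Since $u,v\in L_{\forall}$, Theorem 4.4 gives $\langle u\rangle_\forall=\{z\in L\mid z\geq (\forall u)^n=u^n \text{ for some } n\}$ and similarly for $v$; in particular $a\geq u^m$ and $a\geq v^n$ for suitable $m,n\geq 1$. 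Then $a\geq u^m\vee v^n\geq (u\vee v)^{mn}=1^{mn}=1$, forcing $a=1$, a contradiction. (Here I am using, as in the proof of Theorem 4.4(7), that $u^m\vee v^n\geq (u\vee v)^{mn}$, which holds in any NM-algebra.) Hence $u=1$ or $v=1$, as required.

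An alternative, perhaps cleaner, route avoids re-deriving the $u\vee v=1$ dichotomy inside $\forall L$ and instead works through filters directly. By Corollary 4.7 the lattice $MF[L]$ of monadic filters of $(L,\forall)$ is isomorphic to the lattice $F[\forall L]$ of filters of the NM-algebra $\forall L$, via $F\mapsto F\cap\forall L$ with inverse $G\mapsto\langle G\rangle$. Subdirect irreducibility of $(L,\forall)$ says $\bigcap\{F\in MF[L]\mid F\ne\{1\}\}\ne\{1\}$; transporting this isomorphism, $\bigcap\{G\in F[\forall L]\mid G\ne\{1\}\}\ne\{1\}$, i.e.\ $\forall L$ is a subdirectly irreducible NM-algebra in the ordinary sense (the least nontrivial filter exists). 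One then applies the known fact for NM-algebras — the analogue of Theorem 4.28 for plain NM-algebras, namely that a subdirectly irreducible NM-algebra is linearly ordered (this is the $L_{\forall}$-internal version of Proposition 4.27 plus prelinearity) — to conclude that $\forall L$ is a chain.

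The only genuinely delicate point is bookkeeping: making sure the monadic-filter/filter correspondence of Corollary 4.7 really does carry "subdirectly irreducible" to "subdirectly irreducible," and that $\langle u\rangle_\forall$ computed in $(L,\forall)$ restricts correctly to the principal filter of $u$ inside $\forall L$ when $u\in L_{\forall}$ — both of which follow from Theorem 4.4 and Theorem 4.5 but should be stated explicitly. I would present the first route (via Theorem 4.26 and the inequality $u^m\vee v^n\geq(u\vee v)^{mn}$) as the main argument, since it is self-contained and mirrors Theorem 4.28 almost verbatim, and mention the filter-lattice argument as a remark.
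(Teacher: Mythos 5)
Your main argument is correct and essentially coincides with the paper's proof: both take a non-unit element of the least nontrivial monadic filter (your $a$ from Theorem 4.26, the paper's $x\in F\setminus\{1\}$), bound it below by powers of $\forall x\rightarrow\forall y$ and $\forall y\rightarrow\forall x$ via Theorem 4.4(1) and the fact that these residuals lie in $\forall L$, and reach the contradiction from $(\forall x\rightarrow\forall y)^n\vee(\forall y\rightarrow\forall x)^n=1$ (Proposition 2.2(11)), which is exactly your prelinearity-plus-$u^m\vee v^n\geq(u\vee v)^{mn}$ step. Your repackaging through the claim that $u\vee v=1$ forces $u=1$ or $v=1$ for $u,v\in\forall L$, and the alternative filter-lattice sketch, are only cosmetic variations on the same proof.
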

\begin{proof} Let $F$ be the smallest non-trivial monadic filter of $(L,\forall)$ and $x\in F-\{1\}$. Assume that $\forall L$ is not linearly ordered, and $\forall a$, $\forall b \in \forall L$ such that $\forall a\nleqslant \forall b$ and $\forall b\nleqslant \forall a$. Then the filters $\langle \forall a\rightarrow \forall b\rangle_\forall$ and $\langle \forall b\rightarrow \forall a\rangle_\forall$ generated by $\forall a\rightarrow \forall b$ and $\forall b\rightarrow \forall a$ respectively, are non-trivial, and both contain $F$, in particular, $x\in \langle \forall a\rightarrow \forall b\rangle_\forall$ and $x\in\langle \forall b\rightarrow \forall a\rangle_\forall$. Since $\forall a\rightarrow \forall b\in \forall L$, by Theorem 4.4(1), there is a $n$ such that $x\geq (\forall(\forall a\rightarrow \forall b))^n\geq(\forall a\rightarrow \forall b)^n$ and $x\geq (\forall(\forall b\rightarrow \forall a))^n\geq(\forall b\rightarrow \forall a)^n$. Therefore, $x\geq (\forall a\rightarrow \forall b)^n\vee (\forall b\rightarrow \forall a)^n=1$ by Proposition 2.2(11). Thus, we can obtain that $x=1$, which is a contradiction the hypothesis.
\end{proof}

In the following, we will show that monadic NM-algebra $(L,\forall)$ is subdirectly irreducible if and only if NM-algebra $L_{\forall}$ is a subdirectly irreducible.
\begin{theorem} Let $(L,\forall)$ be a monadic NM-algebra. Then the following statements are equivalent:
\begin{enumerate}[(1)]
  \item $(L,\forall)$ is a subdirectly irreducible monadic NM-algebra,
  \item $L_{\forall}$ is a subdirectly irreducible subalgebra of $L$.
\end{enumerate}
\end{theorem}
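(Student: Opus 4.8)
The plan is to use the correspondence between monadic filters of $(L,\forall)$ and filters of the NM-algebra $L_{\forall}$ established in Corollary 4.7, together with the filter-theoretic characterizations of subdirect irreducibility for monadic NM-algebras (the remark following Definition 4.23) and for NM-algebras (Definition 2.4(2), rephrased via congruences/filters as in Section 2). The key observation is that the lattice isomorphism $MF[L]\cong F[L_{\forall}]$ of Corollary 4.7 sends a monadic filter $F$ to $F\cap L_{\forall}$, with inverse sending a filter $G$ of $L_{\forall}$ to the monadic filter $\langle G\rangle$ of $(L,\forall)$ generated by $G$, and that under this isomorphism the trivial filter $\{1\}$ corresponds to $\{1\}$ on both sides. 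Since a lattice isomorphism preserves and reflects the property ``there exists a least element among the nontrivial elements,'' we are essentially done once the dictionary is set up correctly.

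First I would recall that, by the discussion after Definition 4.23, $(L,\forall)$ is subdirectly irreducible iff among the nontrivial monadic filters of $(L,\forall)$ there is a least one, i.e. $\bigcap\{F\in MF[L]\mid F\neq\{1\}\}\neq\{1\}$. Dually, by Definition 2.4(2) together with the congruence--filter correspondence for NM-algebras (Section 2), the NM-algebra $L_{\forall}$ is subdirectly irreducible iff among the nontrivial filters of $L_{\forall}$ there is a least one, i.e. $\bigcap\{G\in F[L_{\forall}]\mid G\neq\{1\}\}\neq\{1\}$. Next I would invoke Corollary 4.7: the map $\Phi\colon MF[L]\to F[L_{\forall}]$, $F\mapsto F\cap L_{\forall}$, is a lattice isomorphism. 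One checks that $\Phi(\{1\})=\{1\}$ and, since $\Phi$ is an order isomorphism, $F\neq\{1\}$ iff $\Phi(F)\neq\{1\}$. Hence $\Phi$ restricts to an order isomorphism between the nontrivial monadic filters of $(L,\forall)$ and the nontrivial filters of $L_{\forall}$; consequently one poset has a least element iff the other does, and the least elements correspond under $\Phi$.

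For $(1)\Rightarrow(2)$: if $(L,\forall)$ is subdirectly irreducible, let $F_0$ be the least nontrivial monadic filter; then $\Phi(F_0)=F_0\cap L_{\forall}$ is the least nontrivial filter of $L_{\forall}$ (it is nontrivial because $\Phi$ reflects $\{1\}$, and it is below every other nontrivial filter because $\Phi^{-1}$ is order preserving), so $L_{\forall}$ is subdirectly irreducible; by Proposition 3.7(15) $L_{\forall}=\forall L$ is a subalgebra of $L$. For $(2)\Rightarrow(1)$: if $L_{\forall}$ is a subdirectly irreducible subalgebra, let $G_0$ be its least nontrivial filter; then $\Phi^{-1}(G_0)$ is a nontrivial monadic filter contained in every nontrivial monadic filter (applying $\Phi$ and using that $\Phi$ is an order isomorphism), so $(L,\forall)$ has a least nontrivial monadic filter and is therefore subdirectly irreducible.

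The main obstacle is not the abstract lattice argument but pinning down precisely that the isomorphism of Corollary 4.7 has the form $F\mapsto F\cap L_{\forall}$ with inverse $G\mapsto\langle G\rangle$, and that it identifies $\{1\}$ with $\{1\}$; this requires Theorem 4.5 (a filter $F$ of $L$ is monadic iff $F=\langle F\cap L_{\forall}\rangle$) and a small verification that $\langle G\rangle\cap L_{\forall}=G$ for a filter $G$ of $L_{\forall}$, which follows from Theorem 4.3 together with $\forall$ being the identity on $L_{\forall}$. Once these bookkeeping facts are in hand, the equivalence is immediate, mirroring the proof of the analogous statement for monadic Heyting algebras in \cite{Bezhanishvili}.
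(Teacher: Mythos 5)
Your proposal is correct and follows essentially the same route as the paper: both rest on the correspondence between monadic filters of $(L,\forall)$ and filters of $L_{\forall}$ (Theorem 4.5 / Corollary 4.7) and the characterization of subdirect irreducibility by the existence of a least nontrivial (monadic) filter, with the least elements matched by $F\mapsto F\cap L_{\forall}$ and $G\mapsto\langle G\rangle$. The paper merely carries out the correspondence by hand (checking nontriviality via $\mathrm{Ker}(\forall)=\{1\}$ and minimality directly) where you invoke the lattice isomorphism abstractly.
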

\begin{proof}$(1)\Rightarrow(2)$ Let $(L,\forall)$ be a subdirectly irreducible monadic NM-algebra, then $MF[L]-\{1\}$ has a minimal element $F$. From Proposition 3.7(15), it is clear that $L_{\forall}$ is a subalgebra of $L$. Now, we will show that $F\cap L_{\forall}$ is the minimal monadic filter of $L_{\forall}$ such that $F\cap L_{\forall}\neq\{1\}$. First, if $F\cap L_{\forall}=\{1\}$, since $\forall F\subseteq F\cap L_{\forall}$ and hence $\forall x=1$ for any $x\in F$. Thus, $F\subseteq Ker(\forall)=\{1\}$ and $F=\{1\}$, which is a contradiction, that means, $F\cap L_{\forall}\neq\{1\}$. Next, we will show that $F\cap L_{\forall}$ is the minimal monadic filter of $(L,\forall)$. Suppose that $G$ is a filter of $L_{\forall}$, from corollary 4.6, we have $\langle G\rangle_\forall$ is the monadic filter of $(L,\forall)$ generated by $G$. Clearly $\langle G\rangle_\forall\cap L_{\forall}=G$. By minimality of $F$, $F\subseteq \langle G\rangle_\forall$ and hence $F\cap L_{\forall}\subseteq \langle G\rangle_\forall \cap L_{\forall}=G$. Then, $F\cap L_{\forall}$ is the minimal filter of $L_{\forall}$ such that $F\cap L_{\forall}\neq\{1\}$.  Therefore, $L_{\forall}$ is a subdirectly irreducible subalgebra of $L$.

$(2)\Rightarrow(1)$ Let $L_{\forall}$ is a subdirectly irreducible subalgebra of $L$. Then there exists a minimal filter $F$ of $L_{\forall}$ such that $F\neq\{1\}$. From Theorem 4.5, we get that $\langle F\cap L_\forall\rangle$ is a monadic filter of $(L,\forall)$. Further, we will show that $\langle F\cap L_\forall\rangle$ is a minimal monadic filter of $(L,\forall)$. In fact, if $G$ is another non-trivial monadic filter of $(L,\forall)$, then $G\cap L_{\forall}\supseteq F\cap L_{\forall}$. Then, $G$ contains the monadic filter generated by $F$, that is, $\langle F\cap L_\forall\rangle\subseteq\langle G\cap L_\forall\rangle$, i.e., $\langle F\cap L_\forall\rangle$ is minimal. Thus, $(L,\forall)$ is a subdirectly irreducible monadic NM-algebra.
\end{proof}

In the following, we will give another extension to monadic NM-algebras of a representation theorem.

\begin{proposition} Let $(L,\forall)$ be a strong monadic NM-algebra and $F$ be a monadic filter of $(L,\forall)$. Then the following properties hold: for any $x, y,z\in L$,
$$F=\langle F\cup \{x\rightarrow y\}\rangle_\forall\cap \langle F\cup \{y\rightarrow x\}\rangle_\forall$$
\end{proposition}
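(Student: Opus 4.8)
The plan is to prove the two inclusions separately. The inclusion $F\subseteq\langle F\cup\{x\rightarrow y\}\rangle_\forall\cap\langle F\cup\{y\rightarrow x\}\rangle_\forall$ is immediate, since each of the two generated monadic filters contains $F$ by definition. All the work therefore lies in showing $\langle F\cup\{x\rightarrow y\}\rangle_\forall\cap\langle F\cup\{y\rightarrow x\}\rangle_\forall\subseteq F$, which I would do element-wise.

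Let $t$ lie in the intersection and write $p=\forall(x\rightarrow y)$, $q=\forall(y\rightarrow x)$. By Theorem 4.4(2) there are $f_1,f_2\in F$ and $n_1,n_2\ge 1$ with $t\ge f_1\odot p^{n_1}$ and $t\ge f_2\odot q^{n_2}$. Taking $n=\max\{n_1,n_2\}$ and using that powers of an element are decreasing (every element is $\le 1$), one gets $t\ge f_1\odot p^{n}$ and $t\ge f_2\odot q^{n}$; replacing $f_1,f_2$ by $f:=f_1\odot f_2\in F$ (a filter is closed under $\odot$, and $f\le f_1,f_2$) then yields $t\ge f\odot p^{n}$ and $t\ge f\odot q^{n}$, hence $t\ge(f\odot p^{n})\vee(f\odot q^{n})=f\odot(p^{n}\vee q^{n})$, the last equality because $f\odot(-)$ preserves joins (a standard consequence of the residuation law, Definition 2.1(3)).

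At this point the strong hypothesis enters: by Definition 2.1(4), $(x\rightarrow y)\vee(y\rightarrow x)=1$, so by (U4') together with Proposition 3.7(2), $p\vee q=\forall(x\rightarrow y)\vee\forall(y\rightarrow x)=\forall\bigl((x\rightarrow y)\vee(y\rightarrow x)\bigr)=\forall 1=1$. From $p\vee q=1$ I would then deduce $p^{n}\vee q^{n}=1$: expanding $(p\vee q)^{2n-1}=\bigvee_{i+j=2n-1}p^{i}\odot q^{j}$ (distributivity of $\odot$ over $\vee$, by induction on the exponent), each summand has $i\ge n$ or $j\ge n$ and hence lies below $p^{n}$ or below $q^{n}$, so $1=(p\vee q)^{2n-1}\le p^{n}\vee q^{n}\le 1$. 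Substituting back, $t\ge f\odot 1=f\in F$, and upward closure of $F$ gives $t\in F$, completing the argument.

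The only step that is not entirely routine is the reduction $p\vee q=1\Rightarrow p^{n}\vee q^{n}=1$, but this is just the natural generalization of Proposition 2.2(11) and should cause no real trouble. The point I would be careful about is that the strong axiom (U4') is genuinely needed: since $y\rightarrow x$ need not belong to the range of $\forall$, the weaker axiom (U4) — which governs only expressions of the form $x\vee\forall y$ — does not by itself give $p\vee q=1$. (Alternatively, one could argue lattice-theoretically via $\langle F\cup\{a\}\rangle_\forall=F\vee\langle a\rangle_\forall$ together with $\langle x\rightarrow y\rangle_\forall\cap\langle y\rightarrow x\rangle_\forall=\langle\forall(x\rightarrow y)\vee\forall(y\rightarrow x)\rangle_\forall=\langle 1\rangle_\forall=\{1\}$ from Theorem 4.4(7), plus distributivity of $MF[L]$ through Corollary 4.6; but since that distributivity is not established in the paper, I would keep the computational argument as the primary one.)
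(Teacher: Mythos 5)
Your proof is correct and follows essentially the same route as the paper: the same description of the generated monadic filters via Theorem 4.4(2), the same passage to a common $f=f_1\odot f_2$ and $n=\max\{n_1,n_2\}$, and the same use of (U4') with prelinearity to show the join of the $n$-th powers of $\forall(x\rightarrow y)$ and $\forall(y\rightarrow x)$ equals $1$; the only cosmetic difference is that you distribute $f\odot(-)$ over the join where the paper residuates both inequalities to $f\rightarrow z$ and takes the join there. If anything, your explicit derivation of $p^{n}\vee q^{n}=1$ from $p\vee q=1$ is more careful than the paper's bare appeal to Proposition 2.2(11), which as stated concerns only the pair $(x\rightarrow y,\ y\rightarrow x)$ itself.
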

\begin{proof}It is easy to check that the forward inclusion is straightforward. Now, assume that $z$ is an element of both $\langle F\cup \{x\rightarrow y\}\rangle_\forall$ and $\langle F\cup \{y\rightarrow x\}\rangle_\forall$. Then, there are $f_1$,$f_2\in F$, $n_1,n_2\in N$ such that $f_1\odot (\forall(x\rightarrow y))^{n_1}\leq z$ and $f_2\odot (\forall(y\rightarrow x))^{n_2}\leq z$. If we let $f=f_1\odot f_2$ and $n=\max\{n_1,n_2\}$, it follows that $f\odot(\forall(x\rightarrow y))^{n}\leq z$ and $f\odot (\forall(y\rightarrow x))^{n}\leq z$. From the definition 2.1(3), we have  $(\forall(x\rightarrow y))^n\leq f\rightarrow z$ and $(\forall(y\rightarrow x))^n\leq f\rightarrow z$, and hence t $(\forall(x\rightarrow y))^n\vee (\forall(y\rightarrow x))^n\leq f\rightarrow z$. Further by Proposition 2.2(11), we have $(\forall(x\rightarrow y))^n\vee (\forall(y\rightarrow x))^n=(\forall((x\rightarrow y)\vee (y\rightarrow x)))^n=\forall 1=1$.  So $f\leq z$ and $z\in F$.
\end{proof}

\begin{proposition} Let $(L,\forall)$ be a strong monadic NM-algebra such that $\forall L$ is linearly ordered. Given $a\in L$, $a\neq 1$, there exists a prime monadic filter $P$ of $(L,\forall)$ such that $a\vee \forall r\notin P$ for any $r\neq 1$.
\end{proposition}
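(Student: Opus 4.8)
The plan is to find $P$ as a proper monadic filter of $(L,\forall)$ disjoint from the set $\Sigma:=\{\,a\vee\forall r\mid r\in L,\ r\neq 1\,\}$, because disjointness from $\Sigma$ is precisely the requirement that $a\vee\forall r\notin P$ for every $r\neq 1$. The decisive point is to check that $1\notin\Sigma$, i.e. that $a\vee\forall r\neq 1$ whenever $r\neq 1$. Here I would use (U4): $\forall(a\vee\forall r)=\forall a\vee\forall r$, so an equality $a\vee\forall r=1$ would give $\forall a\vee\forall r=\forall 1=1$; but $\forall a,\forall r\in\forall L$ and $\forall L$ is a chain, so $\forall a\vee\forall r=\max\{\forall a,\forall r\}$ and hence $\forall a=1$ or $\forall r=1$, which by (U1) (which yields $\forall a\leq a$ and $\forall r\leq r$) forces $a=1$ or $r=1$, against the hypotheses. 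One also records that $\Sigma\neq\emptyset$, since $\forall 0=0$ gives $a=a\vee\forall 0\in\Sigma$ (note $0\neq 1$, as $L$ is nontrivial because $a\neq 1$).

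Next I would exhibit $P$. Since $L$ is nontrivial, $\{1\}$ is a proper monadic filter, and $\{1\}\cap\Sigma=\emptyset$ by the previous step. Moreover, because $\forall L$ is linearly ordered, for all $x,y\in L$ either $\forall x\leq\forall y$ or $\forall y\leq\forall x$, that is $\forall x\rightarrow\forall y=1$ or $\forall y\rightarrow\forall x=1$; hence by Theorem 4.13, applied to the strong monadic NM-algebra $(L,\forall)$, \emph{every} proper monadic filter of $(L,\forall)$ is a prime monadic filter. Consequently $P:=\{1\}$ already meets all the requirements. If a maximal such filter is preferred (which is the natural object for the ensuing representation argument), one applies Zorn's Lemma to the family of monadic filters disjoint from $\Sigma$: it is nonempty and closed under unions of chains, since a directed union of monadic filters is a monadic filter and disjointness from $\Sigma$ passes to unions, so it has a maximal element $P$; this $P$ is proper because $a\in\Sigma$ forces $a\notin P$, and prime by the same application of Theorem 4.13.

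The argument really turns on the single step $1\notin\Sigma$, and this is exactly where linearity of $\forall L$ is indispensable, via the fact that a join of two elements of a chain equals one of them. Without that hypothesis one could have $\forall a\vee\forall r=1$ with both terms of the join different from $1$, in which case $a\vee\forall r$ might equal $1$ and thus lie in every monadic filter, making the proposition false. Everything else is routine: primeness of arbitrary proper monadic filters is immediate from Theorem 4.13 once $\forall L$ is a chain, and the Zorn argument is the standard one.
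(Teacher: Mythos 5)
Your proof is correct, and it takes a genuinely shorter route than the paper's. You share the key first step: $1\notin\Sigma$, obtained from (U4), the linearity of $\forall L$, and (U1) (which gives $\mathrm{Ker}(\forall)=\{1\}$) — exactly the paper's computation. But from there you observe that linearity of $\forall L$ makes condition (3) of Theorem 4.13 ($\forall x\rightarrow\forall y\in F$ or $\forall y\rightarrow\forall x\in F$ for all $x,y$) hold trivially for \emph{every} proper monadic filter, since one of the two implications equals $1$; hence the trivial filter $\{1\}$ is already a prime monadic filter and, being disjoint from $\Sigma$, witnesses the statement. The paper instead applies Zorn's Lemma to the family of monadic filters disjoint from $C=\{a\vee\forall r\mid r\neq 1\}$ and proves primeness of a maximal such $P$ by hand, using the identity $P=\langle P\cup\{x\rightarrow y\}\rangle_\forall\cap\langle P\cup\{y\rightarrow x\}\rangle_\forall$ (Proposition 4.31), comparability of $\forall r_1$ and $\forall r_2$, and maximality, before invoking Theorem 4.13 $(1)\Leftrightarrow(3)$; your optional Zorn paragraph reproduces that construction but with the primeness verification trivialized. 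What your route makes visible is that under these hypotheses the proposition is nearly vacuous: strongness applied to $(x\rightarrow y)\vee(y\rightarrow x)=1$, together with linearity of $\forall L$ and (U1), in fact forces $L$ itself to be a chain, so every proper monadic filter is prime monadic. The paper's construction yields a maximal filter disjoint from $C$, which is closer in spirit to what Theorem 4.33 wants, but the statement as written is fully met by $P=\{1\}$, so your argument stands.
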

\begin{proof} Considering the set $C=\{a\vee \forall r| r\neq 1\}$. Note that $1\notin C$, since $a\vee\forall r=1$ implies that
$1=\forall (a \vee \forall r) = \forall a\vee \forall r$ and this would imply that $a = 1$ or $r = 1$.
Let $\mathcal{F}$ be the family of monadic filters $F$ in $(L,\forall)$ such that $F\cap C = \emptyset$. The above paragraph shows that
$\{1\}\in \mathcal{F}$, so that $\mathcal{F}$ is nonempty. In addition, it is straightforward to verify that any chain in $\mathcal{F}$
has an upper bound in $\mathcal{F}$. Hence, by Zorn's Lemma, there exists a maximal filter $P$ in $\mathcal{F}$.
We claim that $P$ is prime monadic filter. Indeed, let $x, y\in L$ and note that
$P = \langle P\cup\{x\rightarrow y\}\rangle_\forall \cap \langle P\cup\{y\rightarrow x\}\rangle_\forall$. If we assume that neither $\langle P\cup\{x\rightarrow y\}\rangle_\forall$ nor $\langle P\cup\{y\rightarrow x\}\rangle_\forall$ belongs to $\mathcal{F}$, then there are
$r_1, r_2\neq 1$ such that $a\vee \forall r_1 \in \langle P\cup\{x\rightarrow y\}\rangle_\forall$ and $a\vee \forall r_2 \in \langle P\cup\{y\rightarrow x\}\rangle_\forall$ . Since $\forall r_1$ and $\forall r_2$ are comparable, it follows that one of them belongs to both filters. Hence one of them belongs to $P$ , a contradiction. This shows that either $\langle P\cup\{x\rightarrow y\}\rangle_\forall\in \mathcal{F}$ or $\langle P\cup\{y \rightarrow x\}\rangle_\forall\in \mathcal{F}$. Assume the first option is true. By the maximality of $P$, we have $P = \langle P\cup\{x\rightarrow y\}\rangle_\forall$. Further by Proposition 3.7(5) and Theorem 4.4(2), we have $\forall x\rightarrow \forall y\in P$.
Analogously, $P = \langle P\cup\{y\rightarrow x\}\rangle_\forall$, so $\forall y\rightarrow \forall x\in P$. Thus, from Theorem 4.13 $(1)\Leftrightarrow (3)$, one can see that $P$ is a prime monadic filter of $(L,\forall)$.
\end{proof}
\begin{theorem} Let $(L,\forall)$ be a strong monadic NM-algebra. Then there exists a subdirect representation of the underlying NM-algebra $L\leq \prod_{i\in I}{L_i}$, where each $L_i$ is a totally ordered NM-algebra and $\forall L$ is embedded in $L_i$ via the corresponding projection map.
\end{theorem}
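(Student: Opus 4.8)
The plan is to assemble a single family of prime monadic filters, one for every non-unit element of $L$, to pass to the quotients they determine, and then to verify that the resulting product map is a subdirect embedding that stays injective when restricted to the subalgebra $\forall L$. We may assume $L\neq\{1\}$, otherwise the claim is vacuous. First, for each $a\in L$ with $a\neq 1$, I would invoke Proposition 4.32 to fix a prime monadic filter $P_a$ of $(L,\forall)$ with $a\vee\forall r\notin P_a$ for every $r\neq 1$; this is precisely the point at which the linear order of $\forall L$ enters, via that proposition. Two facts then fall out immediately. Taking $r=0$ and using $\forall 0=0$ (Proposition 3.7(1)) gives $a=a\vee\forall 0\notin P_a$, so $P_a$ is a proper filter. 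And if $z\in\forall L$ with $z\neq 1$, then taking $r=z$ gives $a\vee z=a\vee\forall z\notin P_a$; since $z\leq a\vee z$ and $P_a$ is upward closed, $z\notin P_a$. Hence $P_a\cap\forall L=\{1\}$.

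Next I would set $L_a:=L/P_a$ with the induced NM-operations and induced quantifier $\forall_{P_a}$ (Proposition 4.10), and let $\pi_a:L\to L_a$ be the canonical homomorphism, which is onto and satisfies $\pi_a\circ\forall=\forall_{P_a}\circ\pi_a$. Since $(L,\forall)$ is a strong monadic NM-algebra and $P_a$ is a prime (hence proper) monadic filter, the equivalence $(1)\Leftrightarrow(4)$ of Theorem 4.13 shows that $(L_a,\forall_{P_a})$ is a linearly ordered monadic NM-algebra, so in particular $L_a$ is a totally ordered NM-algebra. Put $\Phi:L\to\prod_{a\in L\setminus\{1\}}L_a$, $\Phi(x)=(\pi_a(x))_a$; since each $\pi_a$ is surjective, $\Phi$ is a subdirect embedding once shown to be injective. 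So suppose $x\neq y$; by Proposition 2.2(1) not both $x\to y$ and $y\to x$ equal $1$, say $b:=x\to y\neq 1$. By the first fact above $b\notin P_b$, hence $\pi_b(x)\to\pi_b(y)=\pi_b(x\to y)=\pi_b(b)\neq 1$ in $L_b$, so $\pi_b(x)\neq\pi_b(y)$ and $\Phi(x)\neq\Phi(y)$. Thus $\Phi$ embeds $L$ subdirectly into $\prod_{a\neq 1}L_a$ as an NM-algebra, and in fact, each $\pi_a$ commuting with $\forall$, as a monadic NM-algebra.

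It remains to check that each projection embeds $\forall L$. Fix $a\neq 1$ and let $u,v\in\forall L$ with $\pi_a(u)=\pi_a(v)$, i.e. $u\to v\in P_a$ and $v\to u\in P_a$. Since $P_a$ is a monadic filter, $\forall(u\to v)\in P_a$, and $\forall(u\to v)\in\forall L$, so the second fact above forces $\forall(u\to v)=1$. By (U1), $1=\forall(u\to v)\leq u\to v$, whence $u\to v=1$ and $u\leq v$; symmetrically $v\leq u$, so $u=v$. Hence $\pi_a$ restricted to $\forall L$ is injective, that is, $\forall L$ is embedded in $L_a$ via the corresponding projection, for every $a$. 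This yields the asserted representation.

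I expect the main obstacle to be identifying the exact separation property the filters $P_a$ must satisfy, rather than any computation. The coarser Theorem 4.18 already produces totally ordered factors, so the genuine new content is the last paragraph, and there it is not enough merely to have $a\notin P_a$: what makes the argument work is the stronger $a\vee\forall r\notin P_a$ supplied by Proposition 4.32, since exactly this delivers $P_a\cap\forall L=\{1\}$ and hence that $\ker\pi_a$ is trivial on $\forall L$. Everything else is routine bookkeeping with Proposition 4.10, Theorem 4.13 and axiom (U1).
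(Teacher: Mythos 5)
Your proof is correct and follows essentially the same route as the paper: for each $a\neq 1$ you take the prime monadic filter $P_a$ supplied by Proposition 4.32, observe that $a\notin P_a$ and $P_a\cap\forall L=\{1\}$, and from this obtain both the subdirect embedding $L\leq\prod_{a\neq 1}L/P_a$ into totally ordered factors and the injectivity of each projection on $\forall L$; your write-up merely makes explicit (via Proposition 4.10 and Theorem 4.13) steps the paper leaves implicit. Note only that, exactly as in the paper's own argument, the appeal to Proposition 4.32 tacitly presupposes that $\forall L$ is linearly ordered, a hypothesis not recorded in the statement of the theorem.
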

\begin{proof}For each $a\in L$, $a\neq1$, let $P_a$ be one of the prime filters provided by the previous proposition. Clearly $\cap _{a\neq 1}P_a =\{1\}$ and we obtain a natural embedding $L\rightarrow \prod_{a\neq1} L/P_a$. To close the proof
we need only show that the natural map $L\rightarrow L/P_a$ is injective on $\forall L$. Indeed, suppose there
were $r_1, r_2\in L$ such that $\forall r_1 \leq \forall r_2$ and $\forall r_1/P_a = \forall r_2/P_a$. We have that $\forall r_2\rightarrow \forall r_1=\forall(\forall r_2\rightarrow \forall r_1)$ and $\forall r_2\rightarrow \forall r_1\neq 1$. Hence, we know that $a\vee (\forall r_2\rightarrow \forall r_1)\notin P_a$, which is a contradiction.
\end{proof}
\section{Monadic NM-logic}
In this section, we present the monadic NM-logic (MNL for short) and prove the (chain) completeness of this logic with respect to the variety of (strong) monadic NM-algebras.\\

The language of MNL consists of countably many proposition variables $(p_1,p_2,\cdots)$, the constants \={0} and \={1}, the unary logic connective $\forall$, the binary logic connectives $\sqcap$, $\sqcup$, \&, $\Rightarrow$ and finally the auxiliary symbols ``(and)". Formulas are defined inductively: proposition variables, \={0} and \={1} are formulas; if $\phi$ and $\psi$ are formulas, then so are $\phi\sqcap\psi$, $\phi\sqcup\psi$, $\phi\&\psi$, $\phi\Rightarrow \psi$, $\forall \phi$. One useful shorthand notations denoted by: \={1} for \={0}$\Rightarrow$ \={0} and $\phi\equiv \psi$ for $(\phi\Rightarrow \psi)\sqcap (\psi\Rightarrow \phi)$  for any formula $\phi$ and $\psi$.\\

Adapting for the axiomatization of monadic predicate NM logic gave by Bianchi in \cite{Bianchi}, we can define monadic NM logic (MNL for short) as a logic which contains NM-logic, the formulas as the axioms schemes:
\\
(MTL1) $(\phi\Rightarrow \psi)\Rightarrow((\psi\Rightarrow\chi)\Rightarrow(\phi\Rightarrow\chi))$,\\
(MTL2) $(\phi\&\psi)\Rightarrow \phi$,\\
(MTL3) $(\phi\&\psi)\Rightarrow(\psi\&\phi)$,\\
(MTL4) $(\phi\sqcap \psi)\Rightarrow \phi$,\\
(MTL5) $(\phi\sqcap \psi)\Rightarrow (\psi\sqcap \phi)$,\\
(MTL6) $(\phi\&(\phi\Rightarrow \psi)\Rightarrow(\phi\sqcap\psi)$,\\
(MTL7a) $(\phi\Rightarrow(\psi\Rightarrow\chi))\Rightarrow((\phi\&\psi)\Rightarrow\chi)$,\\
(MTL7b) $((\phi\&\psi)\Rightarrow\chi)\Rightarrow(\phi\Rightarrow(\psi\Rightarrow \chi))$,\\
(MTL8) $((\phi\Rightarrow\psi)\Rightarrow\chi)\Rightarrow(((\psi\Rightarrow \phi)\Rightarrow \chi)\Rightarrow \chi)$,\\
(MTL9) \={0}$\Rightarrow\phi $,\\
(DN) $(\phi\Rightarrow $\={0})$ \Rightarrow $\={0}$ \Rightarrow\phi$,\\
(WNM) $((\phi\Rightarrow$ \={0})$\&\psi)\vee ((\phi\sqcap \psi)\Rightarrow(\phi\&\psi))$,\\
(U1) $\forall \phi\Rightarrow\phi$,\\
(U2) $\forall((\phi\Rightarrow $\={0}$)\Rightarrow \psi)\Rightarrow ((\forall \phi)\Rightarrow $\={0}$)\Rightarrow \forall \psi$,\\
(U3) $\forall(\forall\phi\Rightarrow  \psi)\Rightarrow \forall \phi\Rightarrow \forall \psi$,\\
(U4) $\forall(\phi\sqcup \forall\psi)\Rightarrow \forall\phi\sqcup \forall\psi $.\\
and is closed under modus ponens and necessitation $(\phi/\forall \phi)$.\\

As well known, monadic Boolean algebras serve as algebraic models of the one-variable fragment of the classical predicate calculus CPC \cite{Halmos}, while monadic Heyting algebras serve the same purpose for the one-variable fragment of intuitionistic predicate calculus IPC \cite{Bezhanishvili}, where $S5$ modal logic in the case of CPC and MIPC (Prior's intuitionistic modal logic) in the case of IPC. For monadic NM-algebras, which serve some purpose for the one-variable fragment of NM predicate calculus NMPC \cite{Bianchi}, it is routine to check that $S5(\mathbf{NM})$ modal fuzzy logic in the case of NMPC and $S5$ modal logic in the case of CPC, MIPC in the case of IPC. Moreover, H$\acute{a}$jek introduced  basic predicate logic BPC and $S5(\mathbf{BL})$ modal fuzzy logic and proved that $S5(\mathbf{BL})$ modal fuzzy logic is equivalent to monadic basic logic in \cite{Hajek}, that is, monadic basic logic axiomatizes the one variable fragment of the basic predicate logic. By the similarly way, we can prove that $S5(\mathbf{NM})$ modal fuzzy logic is equivalent to monadic  NM-logic (in fact, monadic NM-logic stands for $S5(\mathbf{NM})$ modal fuzzy logic, which contains two modal connectives $\Box$ and $\diamondsuit$, where $\forall$ stands to $\Box$ just as $\exists$ stands to $\diamondsuit$ and
$\exists\phi\equiv (\forall(\phi\Rightarrow $\={0}))$ \Rightarrow $\={0}$ $).  Thus, monadic NM-logic axiomatizes the one variable fragment of the predicate NM-logic.\\


 Let $N$ denote a first-order language based on $\wedge,\vee,\odot,\rightarrow,\forall$ and $N_m$ denote the monadic propositional language based on $\sqcap,\sqcup,\&,\Rightarrow,\forall$, and $Form(N)$ and $Form(N_m)$ be the set of all formulas of $N$, $N_m$, respectively. We fixed a variable $x$ in $N$ , associate with each propositional letter $p$ in $Form(N_m)$ a unique monadic predicate $p^\bullet(x)$ in $Form(N)$ and defined by induction a translation $\Psi:Form(N_m)\longrightarrow Form(N)$ by putting:
 \begin{enumerate}[(1)]
   \item $\Psi(p)=p^\bullet(x)$ if $p$ is propositional variable,
   \item $\Psi(\alpha\circ\beta)=\Psi(\alpha)\circ \Psi(\beta)$, where $\circ=\wedge,\vee,\odot,\rightarrow$,
   \item $\Psi(\forall\alpha)=\forall x\Psi(\alpha)$.
  \end{enumerate}

Through this translation $\Psi$, we can identity the formulas of $N_m$ with the monadic formulas of $N$ containing the variable $x$. Moreover, it is routine to check that $\Psi(MNL)\subseteq NMPC$, where $NMPC$ stand for the predicate calculus of NM-logic that is defined in the paper \cite{Bianchi}.\\

In order to prove a completeness theorem, we are going to summarize some necessary notions of of MNL, which will be used in the further.

The consequence relation $\vdash$ is defined in the usual way. Let $T$ be a theory, i.e., a set of formulas in MNL. A (formula) proof of a formula $\phi$ in $T$ is a finite sequence of formulas with $\phi$ at its end, such that every formula in the sequence is either an axiom of MNL, a formula of $T$, or the result of an application of an deduction rule to previous formulas in the sequence. If a proof of $\phi$ exists in $T$, we say that $\phi$ can be deduced from $T$ and denote this by $T\vdash \phi$. Moreover $T$ is complete if for each pair $\phi$, $\psi$, $T\vdash \phi \Rightarrow \psi$ or $T\vdash \psi \Rightarrow \phi$.

\begin{definition} \emph{Let $\mathcal{L}=(L,\forall)$ be a monadic NM-algebra and $T$ be a theory. An $\mathcal{L}$-evaluation is a mapping $e$ from the set of formulas of MNL to $L$ that satisfies, for each two formulas $\phi$ and $\psi$: $e(\phi\sqcap \psi)=e(\phi)\wedge e(\psi)$, $e(\phi\sqcup \psi)=e(\phi)\vee e(\psi)$, $e(\phi\Rightarrow \psi)=e(\phi)\rightarrow e(\psi)$, $e(\phi\& \psi)=e(\phi)\odot e(\psi)$, $e(\forall \phi)=\forall e(\phi)$, $e$(\={0})=0 and $e$(\={1})=1. If a $\mathcal{L}$-evaluation $e$ satisfies $e(\chi)=1$ for every $\chi$ in $T$, it is called a $\mathcal{L}$-model of $T$.}
\end{definition}

Now, we stress our attention to the Lindenbaum-Tarski algebra of MNL.
\begin{definition}\emph { Let $T$ be a fixed theory over MNL. For each formula $\phi$, let $[\phi]_T$ be the set of all formulas $\psi$ such that $T\vdash \phi\equiv\psi$ and $L/T$ be the set of all the class $[\phi]_T$.We define:
$0=[0]_T$, $1=[1]_T$,
$[\phi]_T\rightarrow [\psi]_T=[\phi\Rightarrow \psi]_T$,
$[\phi]_T\vee [\psi]_T=[\phi\sqcup \psi]_T$,
$[\phi]_T\wedge [\psi]_T=[\phi\sqcap \psi]_T$,
$[\phi]_T\odot[\psi]_T=[\phi\& \psi]_T$,
$\forall_T{[\phi]_T}=[\forall\phi]_T$.
This algebra is denoted by $\mathcal{L_T}=(L/T,\wedge,\vee,\odot,\rightarrow,0,1,\forall_T)$.}
\end{definition}
\begin{proposition} $\mathcal{L_T}=(L/T,\wedge,\vee,\odot,\rightarrow,0,1,\forall_T)$ is a monadic NM-algebra.
\end{proposition}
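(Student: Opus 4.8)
The plan is to carry out the usual Lindenbaum--Tarski verification in three stages: well-definedness of the quotient operations, the NM-algebra reduct, and the quantifier axioms (U1)--(U4).

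First I would show that the relation $\phi\sim_T\psi$ defined by $T\vdash\phi\equiv\psi$ is an equivalence on the formulas of MNL and a congruence with respect to all of $\sqcap,\sqcup,\&,\Rightarrow$ and $\forall$, so that the operations of Definition 5.3 are independent of the chosen representatives. For the binary connectives this is the classical argument: from (MTL1)--(MTL9), (DN), (WNM) and modus ponens one proves that $\sim_T$ is symmetric and transitive and that $T\vdash\phi\equiv\phi'$ and $T\vdash\psi\equiv\psi'$ yield $T\vdash(\phi\circ\psi)\equiv(\phi'\circ\psi')$ for $\circ$ any of the four binary connectives. For $\forall$ the extra ingredient is the provable monotonicity rule ``$T\vdash\phi\Rightarrow\psi$ implies $T\vdash\forall\phi\Rightarrow\forall\psi$'': from $T\vdash\phi\Rightarrow\psi$, axiom (U1) and transitivity give $T\vdash\forall\phi\Rightarrow\psi$, then necessitation gives $T\vdash\forall(\forall\phi\Rightarrow\psi)$, and axiom (U3) with modus ponens gives $T\vdash\forall\phi\Rightarrow\forall\psi$; applying this in both directions yields $T\vdash\forall\phi\equiv\forall\psi$. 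I would also record the standard facts that $[\phi]_T\le[\psi]_T$ in $L/T$ iff $T\vdash\phi\Rightarrow\psi$, and that the top element $1=[\bar 1]_T$ is the class of all $T$-provable formulas; these turn the remaining checks into provability statements.

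Second, since MNL contains the axioms and rules of NM-logic, the reduct $(L/T,\wedge,\vee,\odot,\rightarrow,0,1)$ is exactly the Lindenbaum--Tarski algebra of NM-logic, so each of the six conditions of Definition 2.1 reduces to provability of a concrete formula or pair of formulas: that $(L/T,\wedge,\vee,0,1)$ is a bounded lattice and $(L/T,\odot,1)$ a commutative monoid, the residuation law from (MTL7a)--(MTL7b), prelinearity from (MTL8), the nilpotent-minimum condition from (WNM), and the involution law from (DN). This stage is pure bookkeeping and can be compressed, as the authors do for the analogous algebras elsewhere in the paper.

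Third, I would verify (U1)--(U4) of Definition 3.1 for $\forall_T[\phi]_T=[\forall\phi]_T$. Axiom (U1) is immediate: $\forall_T[\phi]_T\rightarrow[\phi]_T=[\forall\phi\Rightarrow\phi]_T=1$. For (U2), (U3), (U4) the corresponding logical axioms furnish one of the two inequalities each; the converse inequalities must be derived inside MNL from (U1), the monotonicity rule above, necessitation, and a few bootstrapped identities (notably $\forall\forall\phi\equiv\forall\phi$, which follows by combining (U1) with necessitation applied to the theorem $\forall\phi\Rightarrow\forall\phi$ and then (U3)). I expect exactly this last step --- obtaining the reverse implications of the one-directional quantifier axioms, together with the care required in applying necessitation over a theory $T$ rather than merely over theorems --- to be the only genuinely delicate point; everything else is the routine Lindenbaum--Tarski calculation.
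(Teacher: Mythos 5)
Your overall architecture (well-definedness, NM reduct, then the quantifier axioms) is the standard Lindenbaum--Tarski route, and it is in fact more explicit than the paper, which disposes of this proposition with a one-line reference to the quotient construction of Proposition 4.10. Your preliminary steps are fine: the congruence property of $\equiv$, the derived monotonicity rule, and $\forall\forall\phi\equiv\forall\phi$ all go through exactly as you sketch (for the latter, necessitation applied to $\forall\phi\Rightarrow\forall\phi$ plus axiom (U3) and then (U1)). The converse of (U4) is also genuinely obtainable from your toolkit: from $\forall\phi\Rightarrow(\phi\sqcup\forall\psi)$ and $\forall\psi\Rightarrow(\phi\sqcup\forall\psi)$, necessitation and (U3) push $\forall$ onto the consequent, and $\sqcup$-elimination finishes.

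The genuine gap is the step you defer, namely the converse implications for (U2) and (U3), and your proposed toolkit cannot close it as stated. For the algebraic identity (U3) you must prove $T\vdash(\forall\phi\Rightarrow\forall\psi)\Rightarrow\forall(\forall\phi\Rightarrow\psi)$. The only mechanism in MNL for producing a \emph{lower} bound on a $\forall$-formula is the derived rule ``from $\vdash\forall\chi\Rightarrow\theta$ infer $\vdash\forall\chi\Rightarrow\forall\theta$'', so to push $\forall$ onto $\forall\phi\Rightarrow\psi$ you need the antecedent $\forall\phi\Rightarrow\forall\psi$ to be provably equivalent to a $\forall$-formula, i.e.\ $\vdash(\forall\phi\Rightarrow\forall\psi)\Rightarrow\forall(\forall\phi\Rightarrow\forall\psi)$ --- but this is itself an instance (via $\forall\forall\psi\equiv\forall\psi$) of the very schema you are trying to derive; monotonicity, necessitation and $\forall\forall\phi\equiv\forall\phi$ only run in this circle. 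The same ``closedness'' obstacle appears for (U2), where one needs $\neg\forall\phi$ (equivalently $\neg\forall\phi\Rightarrow\forall\psi$) to be a provable fixed point of $\forall$, and in first-order NM logic the analogous step is carried by the generalization axiom with its ``$x$ not free in $\chi$'' side condition, for which the propositional system has no ready substitute. A correct proof therefore needs an additional idea --- a lemma, proved without circularity, that implications, negations and joins of $\forall$-formulas are provably fixed by $\forall$ --- and it is not supplied by your sketch. Note also that your claim that the corresponding logical axioms ``furnish one of the two inequalities each'' already requires care: the printed (U2) of MNL has a bare $\psi$ where the algebraic (U2) has $\forall y$, and with the bare $\psi$ the axiom is not even valid in all monadic NM-algebras (Example 3.9(2) refutes it), so the matching of logical to algebraic axioms must be done for a corrected form rather than routinely.
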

\begin{proof} It is similar to the proof Proposition 4.10.
\end{proof}

\begin{theorem} Let $T$ be a theory over MNL. Then $T$ is complete if and only if the monadic NM-algebra $\mathcal{L_T}=(L/T,\wedge,\vee,\odot,\rightarrow,0,1,\forall_T)$ is linearly ordered.
\end{theorem}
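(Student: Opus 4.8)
The plan is to observe that the lattice order of the Lindenbaum--Tarski algebra $\mathcal{L}_T$ coincides exactly with the provability preorder on formulas, so that the chain condition on $\mathcal{L}_T$ is literally a rephrasing of the completeness of $T$. First I would record the routine bookkeeping facts about the construction in Definition 5.2: for all formulas $\phi,\psi$ one has $[\phi]_T=[\psi]_T$ iff $T\vdash\phi\equiv\psi$, and $[\phi]_T=1=[\bar{1}]_T$ iff $T\vdash\phi$ (the latter because $\bar{1}$ is provable in MNL and $T\vdash\phi$ is equivalent to $T\vdash\bar{1}\Rightarrow\phi$). Combining these with the defining equality $[\phi]_T\rightarrow[\psi]_T=[\phi\Rightarrow\psi]_T$ and with Proposition 2.2(1) applied in the monadic NM-algebra $\mathcal{L}_T$ (which is a genuine monadic NM-algebra by the preceding proposition), I obtain the key equivalence
\[
[\phi]_T\le[\psi]_T \iff [\phi]_T\rightarrow[\psi]_T=1 \iff [\phi\Rightarrow\psi]_T=[\bar{1}]_T \iff T\vdash\phi\Rightarrow\psi .
\]

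For the forward implication, suppose $T$ is complete and let $a,b\in L/T$ be arbitrary. By construction $a=[\phi]_T$ and $b=[\psi]_T$ for some formulas $\phi,\psi$, and completeness gives $T\vdash\phi\Rightarrow\psi$ or $T\vdash\psi\Rightarrow\phi$; by the key equivalence this says $a\le b$ or $b\le a$. Hence any two elements of $\mathcal{L}_T$ are comparable, i.e. $\mathcal{L}_T$ is linearly ordered.

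For the converse, suppose $\mathcal{L}_T$ is linearly ordered and let $\phi,\psi$ be arbitrary formulas. Then $[\phi]_T$ and $[\psi]_T$ are comparable; if $[\phi]_T\le[\psi]_T$ the key equivalence yields $T\vdash\phi\Rightarrow\psi$, and the case $[\psi]_T\le[\phi]_T$ is symmetric. Thus for each pair $\phi,\psi$ we have $T\vdash\phi\Rightarrow\psi$ or $T\vdash\psi\Rightarrow\phi$, i.e. $T$ is complete.

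The whole argument is a translation of definitions once the key equivalence is available, so the only step needing genuine care is the preliminary provability lemma: verifying that $T\vdash\phi$ iff $[\phi]_T=[\bar{1}]_T$ and that $\equiv$-provability coincides with equality of classes. These rest on standard facts about MNL (provability of $\bar 1$, the deduction-theorem-style behaviour of $\Rightarrow$, and $\chi,\ \chi\equiv\rho\vdash\rho$), all inherited from the NM-logic fragment; I expect this to be the main, though modest, obstacle, after which everything else is immediate.
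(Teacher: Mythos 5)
Your proof is correct and follows essentially the same route as the paper: the paper proves this by the analogue of Theorem 4.13 ($(1)\Leftrightarrow(4)$, via the condition that all implications $\forall x\rightarrow\forall y$ or $\forall y\rightarrow\forall x$ lie in the filter), and your direct argument is exactly that correspondence instantiated at the Lindenbaum--Tarski algebra, where the order–provability equivalence $[\phi]_T\le[\psi]_T\iff T\vdash\phi\Rightarrow\psi$ turns linearity of $\mathcal{L}_T$ into the definition of completeness of $T$.
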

\begin{proof} It is similar to the proof of Theorem 4.13 $(1)\Leftrightarrow (4)$.
\end{proof}

It is easy to check that MNL is sound with respect to the variety of monadic NM-algebras, i.e., that is, if a formula $\phi$ can be deduced from a theory $T$ in MNL, then for every monadic NM-algebra $\mathcal{L}$ and for every $\mathcal{L}$-model $e$ of $T$, $e(\phi)=1$. Indeed, we need to verify the soundness of the new axioms and deduction of MNL (for the axioms and rules of NM, the reader can check \cite{Esteva}). For the axioms this is easy, as they are straightforward generalizations of axioms of monadic NM-algebras. We will now verify the soundness of the new deduction rules.
\begin{proposition} The deduction rules of MNL are sound in the following sense, for any formula $\phi$ and $\psi$.
\begin{enumerate}[(1)]
  \item If for all monadic NM-algebra $\mathcal{L}$ and for all $\mathcal{L}$-model $e$ for $T$, $e(\phi)=1$, then for all monadic NM-algebra $\mathcal{L}$ and for all $\mathcal{L}$-tautology $e$ for $T$, $e(\forall\phi)=1$.
  \item If for all monadic NM-algebra $\mathcal{L}$ and for all $\mathcal{L}$-model $e$ for $T$, $e(\phi)=1$ and $e(\phi\rightarrow \psi)=1$, then for all monadic NM-algebra $\mathcal{L}$ and for all $\mathcal{L}$-model $e$ for $T$, $e(\psi)=1$.
\end{enumerate}
\end{proposition}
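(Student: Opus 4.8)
The plan is to verify soundness of the two deduction rules one at a time, arguing inside a fixed monadic NM-algebra $\mathcal{L}=(L,\forall)$ together with a fixed $\mathcal{L}$-model $e$ of $T$; the universal quantification over all algebras and all models that appears in the statement is then simply carried along, since each argument is purely local to the pair $(\mathcal{L},e)$. I expect no substantial obstacle: the necessitation rule reduces to the identity $\forall 1=1$ (Proposition 3.7(2)), and modus ponens reduces to the characterization that $x\leq y$ holds exactly when $x\rightarrow y=1$ (Proposition 2.2(1)) together with the fact that $1$ is the greatest element of $L$.

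For part (1) (necessitation), I would fix such $\mathcal{L}$ and $e$ and assume $e(\phi)=1$. By the defining clause $e(\forall\phi)=\forall e(\phi)$ of an $\mathcal{L}$-evaluation, this gives $e(\forall\phi)=\forall 1$, and Proposition 3.7(2) yields $\forall 1=1$, whence $e(\forall\phi)=1$. Since $\mathcal{L}$ and $e$ were an arbitrary monadic NM-algebra and an arbitrary $\mathcal{L}$-model of $T$ satisfying $e(\phi)=1$, the desired conclusion follows. The one conceptually delicate point --- that in the presence of a nontrivial theory $T$ one cannot in general pass from $T\vdash\phi$ to $T\vdash\forall\phi$ --- does not actually cause trouble here, precisely because the hypothesis of part (1) already asserts $e(\phi)=1$ in \emph{every} model of $T$, which is exactly the information an induction on the length of a proof would supply at a necessitation step.

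For part (2) (modus ponens), I would again fix $\mathcal{L}$ and $e$ and assume $e(\phi)=1$ and $e(\phi\Rightarrow\psi)=1$. The evaluation clause $e(\phi\Rightarrow\psi)=e(\phi)\rightarrow e(\psi)$ gives $e(\phi)\rightarrow e(\psi)=1$, hence $e(\phi)\leq e(\psi)$ by Proposition 2.2(1); combining this with $e(\phi)=1$ yields $1\leq e(\psi)$, and since $1$ is the top element of $L$ we get $e(\psi)=1$. As $\mathcal{L}$ and $e$ were arbitrary, this finishes the argument. In short, both verifications are essentially one-line computations once the appropriate lemma from the Preliminaries is invoked, so the main ``obstacle'' is merely the bookkeeping of unwinding the definition of an $\mathcal{L}$-evaluation correctly; there is no step that requires genuine work.
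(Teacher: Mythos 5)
Your argument is correct and follows exactly the paper's own route: part (1) is the computation $e(\forall\phi)=\forall e(\phi)=\forall 1=1$ via Proposition 3.7(2) and Definition 5.1, and part (2) unwinds $e(\phi\Rightarrow\psi)=e(\phi)\rightarrow e(\psi)=1$ through Proposition 2.2(1). You have merely written out the details that the paper's two-line proof leaves implicit.
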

\begin{proof} $(1)$ It follows from Proposition 3.7(2) and Definition 5.1.

$(2)$ It follows from Proposition 2.2(1) and Definition 5.1.
\end{proof}

In the following, we prove a completeness theorem of MNL based on monadic NM-algebras.

\begin{theorem}Let $T$ be a theory over MNL. For each formula $\phi$, the following statements are equivalent:
\begin{enumerate}[(1)]
  \item $T\vdash \phi$,
  \item for each  monadic NM-algebra $\mathcal{L}$ and for every $\mathcal{L}$-model $e$ of $T$, $e(\phi)=1$ .
\end{enumerate}
\end{theorem}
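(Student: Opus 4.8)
The plan is to prove the two implications separately: $(1)\Rightarrow(2)$ is the soundness of MNL, and $(2)\Rightarrow(1)$ is the completeness direction, which I would obtain from a Lindenbaum--Tarski argument.

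For $(1)\Rightarrow(2)$ I would argue by induction on the length of a proof of $\phi$ from $T$. The base cases are handled by observing that every axiom schema of MNL evaluates to $1$ under any $\mathcal{L}$-model: the NM-fragment axioms are the usual ones (cf. \cite{Esteva}), and (U1)--(U4) become, after applying Definition 5.1, exactly the defining identities of a monadic NM-algebra together with Proposition 3.7. For formulas of $T$ the value is $1$ by the definition of an $\mathcal{L}$-model. The induction step is precisely the statement that the deduction rules preserve the property of evaluating to $1$, which is Proposition 5.5. Hence $e(\phi)=1$ for every monadic NM-algebra $\mathcal{L}$ and every $\mathcal{L}$-model $e$ of $T$.

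For $(2)\Rightarrow(1)$ I would argue by contraposition: assume $T\nvdash\phi$. Consider the Lindenbaum--Tarski algebra $\mathcal{L}_T=(L/T,\wedge,\vee,\odot,\rightarrow,0,1,\forall_T)$ of Definition 5.2, which is a monadic NM-algebra by Proposition 5.3. Define the canonical evaluation $e_T$ on formulas by $e_T(\psi)=[\psi]_T$; the clauses of Definition 5.1 hold by the very definition of the operations on $L/T$, so $e_T$ is an $\mathcal{L}_T$-evaluation. Next I would check that $e_T$ is an $\mathcal{L}_T$-model of $T$: for every $\chi\in T$ we have $T\vdash\chi$, hence $T\vdash\chi\equiv\bar{1}$, so $e_T(\chi)=[\chi]_T=[\bar{1}]_T=1$. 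Finally, using the standard equivalence $T\vdash\psi$ iff $[\psi]_T=1$ (which follows from the NM-fragment: $T\vdash\psi$ iff $T\vdash\bar{1}\Rightarrow\psi$ iff $[\psi]_T=1$), the hypothesis $T\nvdash\phi$ gives $e_T(\phi)=[\phi]_T\neq 1$. Thus $\mathcal{L}_T$ and $e_T$ form a counterexample to $(2)$, completing the contrapositive.

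The only delicate points are bookkeeping rather than conceptual: checking that $\equiv_T$ is a congruence compatible with $\forall_T$, so that $\mathcal{L}_T$ is well defined — this is already absorbed into Proposition 5.3 — and the equivalence ``$T\vdash\psi$ iff $[\psi]_T=1$'', which rests only on the NM-fragment of the calculus and not on the quantifier axioms. I expect the main (modest) obstacle to be making this last equivalence precise, since everything specific to the monadic setting is packaged in Proposition 5.3. I would also remark that the same algebra, combined with Theorem 5.4 applied to a complete extension of $T$, yields at once the chain-completeness refinement of the theorem.
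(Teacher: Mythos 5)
Your proposal is correct and follows essentially the same route as the paper: soundness via the axiom checks and Proposition 5.5, and completeness via the Lindenbaum--Tarski algebra $\mathcal{L}_T$ of Proposition 5.3 with the canonical evaluation $e(\psi)=[\psi]_T$. Your contrapositive phrasing of $(2)\Rightarrow(1)$ is only a cosmetic variation of the paper's direct argument that $e(\phi)=[\phi]_T=[\bar{1}]_T$ forces $T\vdash\phi$.
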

\begin{proof} $(1)\Rightarrow (2)$ It follows from Propositions 5.3 and 5.5.

$(2)\Rightarrow (1)$  To this end recall Proposition 5.3 saying, among other things, that the
algebra $\mathcal{L}_{T}$ of classes of equivalent formulas of monadic NM-logic, is a monadic
NM-algebra, thus $\phi$ is a $\mathcal{L}_{T}$ tautology if it satisfy (2). In particular, let $e(p_i) =
[p_i]_{T}$ and $e(\forall p_i) = [\forall p_i]_{T}$ for all propositional variables pi. Then $e(\phi) =
[\phi]_{T} = [1]_{T}$, thus $T\vdash\phi\equiv 1$, hence $T\vdash \phi$.
\end{proof}

In what follows, we shall analyse one axiomatic extension of MNL in order to prove completeness with respect to linearly ordered structures. The corresponding classes of models are strong monadic NM-algebras, that is a subvarieties of monadic NM-algebras. First, we introduce the propositional calculus strong monadic NM-logic (SMNL for short), which is an axiomatic extension of MNL.

\begin{definition}\emph{The axioms of SMNL are those of MNL plus  $\forall(\phi\sqcup\psi)\rightarrow \forall\phi\sqcup \forall\psi$, where $\phi$ and $\psi$ are arbitrary formulas of MNL.}
\end{definition}

If $T$ is an arbitrary set of formulas then the Lindenbaum-Tarski algebra is defined as usual and it will be denoted by  $\mathcal{SL_T}$. It is obvious that $\mathcal{SL_T}$ is a monadic NM-algebra which satisfies the algebraic identities corresponding to the logical axioms $\forall(\phi\sqcup\psi)\rightarrow \forall\phi\sqcup \forall\psi$.

\begin{theorem}Let $T$ be a theory over SMNL and $T\nvdash \phi$. Then there is a consistent complete supertheory $T^{\prime}\supseteq T$ such that $T^{\prime}\nvdash\phi$.
\end{theorem}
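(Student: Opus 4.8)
The plan is to follow the classical Lindenbaum-style construction, building the complete supertheory by a maximal extension argument that avoids proving $\phi$. First I would consider the family $\mathcal{T}$ of all theories $S$ over SMNL with $T\subseteq S$ and $S\nvdash\phi$. This family is nonempty (it contains $T$ by hypothesis) and is closed under unions of chains, since a proof is a finite object: if $\bigcup_i S_i\vdash\phi$ then already some $S_i\vdash\phi$. Hence by Zorn's Lemma there is a maximal element $T'$ in $\mathcal{T}$. By construction $T\subseteq T'$ and $T'\nvdash\phi$, so in particular $T'$ is consistent (a theory proving $\phi$ would contradict $T'\nvdash\phi$, and an inconsistent theory proves everything).

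It remains to show that $T'$ is complete, i.e. that for every pair of formulas $\alpha,\beta$ we have $T'\vdash\alpha\Rightarrow\beta$ or $T'\vdash\beta\Rightarrow\alpha$. Here I would invoke the disjunction-type axiom (MTL8), namely $((\alpha\Rightarrow\beta)\Rightarrow\chi)\Rightarrow(((\beta\Rightarrow\alpha)\Rightarrow\chi)\Rightarrow\chi)$, specialised to $\chi=\phi$. Suppose, for contradiction, that neither $\alpha\Rightarrow\beta$ nor $\beta\Rightarrow\alpha$ is provable in $T'$. Then the strictly larger theories $T'\cup\{\alpha\Rightarrow\beta\}$ and $T'\cup\{\beta\Rightarrow\alpha\}$ both lie outside $\mathcal{T}$ by maximality of $T'$, so $T'\cup\{\alpha\Rightarrow\beta\}\vdash\phi$ and $T'\cup\{\beta\Rightarrow\alpha\}\vdash\phi$. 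By the deduction theorem for SMNL (which holds because SMNL extends NM-logic, where the deduction theorem is available in the usual $\&$-power form) this yields $T'\vdash(\alpha\Rightarrow\beta)^n\Rightarrow\phi$ and $T'\vdash(\beta\Rightarrow\alpha)^m\Rightarrow\phi$ for suitable $n,m$; taking a common exponent and using Proposition 2.2(11) at the algebraic level — equivalently the provable schema $(\alpha\Rightarrow\beta)^k\vee(\beta\Rightarrow\alpha)^k\Leftrightarrow\bar1$ — together with (MTL8) applied to $\chi=\phi$, one concludes $T'\vdash\phi$, a contradiction. Hence $T'$ is complete.

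The main obstacle is the careful handling of the deduction theorem in the presence of the connective $\&$ (the residuated conjunction is not idempotent, so adjoining a formula $\gamma$ to a theory discharges as $\gamma^n\Rightarrow(-)$ rather than $\gamma\Rightarrow(-)$) and the bookkeeping of exponents when combining the two derivations via (MTL8); this is exactly the point where the NM/MTL proof-theoretic machinery, rather than a soft argument, is needed. I expect this step to parallel the analogous completeness proofs for MTL and NM in \cite{Esteva}, and once it is in place the consistency and completeness of $T'$ follow as described, giving the desired $T'\supseteq T$ with $T'\nvdash\phi$.
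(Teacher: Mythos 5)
Your overall skeleton (Zorn's Lemma on theories extending $T$ that do not prove $\phi$, then completeness of the maximal element via proof-by-cases) is the same route the paper intends, since its proof is declared "similar to Theorem 4.17", which is exactly this maximality argument transposed to monadic filters. However, the completeness step contains a genuine gap. Your deduction theorem is stated in the plain propositional form: from $T'\cup\{\alpha\Rightarrow\beta\}\vdash\phi$ you infer $T'\vdash(\alpha\Rightarrow\beta)^n\Rightarrow\phi$. In MNL/SMNL derivations are closed under the necessitation rule $(\psi/\forall\psi)$ applied also to premises, so $T'\cup\{\gamma\}\vdash\forall\gamma$ always, while $\gamma^n\Rightarrow\forall\gamma$ is not a theorem (soundness plus, e.g., the algebra of Example 3.9(2) with $e(\gamma)=0.8$ refutes it). The correct local deduction theorem therefore discharges as $T'\vdash(\forall\gamma)^n\Rightarrow\phi$, mirroring the description $\langle F,x\rangle_\forall=\{z\mid z\geq f\odot(\forall x)^n\}$ of Theorem 4.4(2). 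Once this is corrected, your combination step needs $(\forall(\alpha\Rightarrow\beta))^n\sqcup(\forall(\beta\Rightarrow\alpha))^n$ to be provable, and that is precisely where the defining SMNL axiom $\forall(\phi\sqcup\psi)\Rightarrow\forall\phi\sqcup\forall\psi$ must be invoked (necessitate the prelinearity theorem, distribute $\forall$ over $\sqcup$, then apply the power trick of Proposition 2.2(11)); this parallels the use of $[\forall x)\wedge[\forall y)=[\forall x\vee\forall y)=[\forall(x\vee y))$ in the paper's proof of Theorem 4.17 and of Proposition 4.31.

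The fact that your argument nowhere uses the strong axiom is itself decisive evidence that something is wrong: the statement is false for plain MNL. Indeed, a complete theory has a linearly ordered Lindenbaum--Tarski algebra (Theorem 5.4), and every linearly ordered monadic NM-algebra is strong, so if every MNL-theory $T$ with $T\nvdash\phi$ admitted a complete extension avoiding $\phi$, then every formula valid in all chains, in particular $\forall(p\sqcup q)\Rightarrow\forall p\sqcup\forall q$, would already be an MNL-theorem, contradicting soundness together with Example 3.15, where $\forall(a\vee b)=d\nleq b=\forall a\vee\forall b$. So a proof using only (MTL8) and a plain $\&$-power deduction theorem proves too much; the necessitation-aware deduction theorem and the SMNL axiom are the missing ingredients, and with them your argument goes through.
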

\begin{proof} It is similar to the proof of Theorem 4.17.
\end{proof}

  One can check that Lindenbaum-Tarski algebra $\mathcal{SL_T}$ is a strong monadic NM-algebra for any set $T$ of formulas. Hence, the semantics of SMNL uses evaluations with values in strong monadic NM-algebra. Further by Theorem 4.18, the following completeness result is straightforward.

\begin{theorem} Let $T$ be a theory over SMNL. For each formula $\phi$, the following statements are equivalent:
\begin{enumerate}[(1)]
  \item $T\vdash \phi$,
  \item for each  strong monadic NM-algebra $\mathcal{L}$ and for every $\mathcal{L}$-model $e$ of $T$, $e(\phi)=1$,
  \item for each linearly ordered strong monadic NM-algebra $\mathcal{L}$ and for every $\mathcal{L}$-model $e$ of $T$, $e(\phi)=1$.
\end{enumerate}
\end{theorem}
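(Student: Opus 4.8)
The plan is to prove the cycle $(1)\Rightarrow(2)\Rightarrow(3)\Rightarrow(1)$. The implication $(1)\Rightarrow(2)$ is soundness: since SMNL extends MNL only by the axiom scheme $\forall(\phi\sqcup\psi)\rightarrow\forall\phi\sqcup\forall\psi$, and every strong monadic NM-algebra validates the corresponding identity (U4') together with (U1)--(U3), the arguments of Propositions 5.3 and 5.5 and Theorem 5.6 carry over verbatim: by induction on the length of a proof of $\phi$ from $T$, every $\mathcal{L}$-model $e$ of $T$ over a strong monadic NM-algebra $\mathcal{L}$ satisfies $e(\phi)=1$. The implication $(2)\Rightarrow(3)$ is immediate, since a linearly ordered strong monadic NM-algebra is in particular a strong monadic NM-algebra.

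For $(3)\Rightarrow(1)$ I would argue by contraposition. Assume $T\nvdash\phi$. By Theorem 5.9 there is a consistent complete supertheory $T'\supseteq T$ with $T'\nvdash\phi$. Form its Lindenbaum--Tarski algebra $\mathcal{SL}_{T'}$; as noted after Theorem 5.9 this is a strong monadic NM-algebra, and the analogue of Theorem 5.4 for SMNL shows that, because $T'$ is complete, $\mathcal{SL}_{T'}$ is linearly ordered. Define the canonical evaluation $e$ by $e(p_i)=[p_i]_{T'}$ and $e(\forall p_i)=[\forall p_i]_{T'}$ on propositional variables; a straightforward induction on the structure of formulas, using the definition of the operations in $\mathcal{SL}_{T'}$, gives $e(\psi)=[\psi]_{T'}$ for every formula $\psi$. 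In particular $e(\chi)=[\chi]_{T'}=[1]_{T'}=1$ for each $\chi\in T'$, hence also for each $\chi\in T$, so $e$ is a model of $T$ over the linearly ordered strong monadic NM-algebra $\mathcal{SL}_{T'}$. But $e(\phi)=[\phi]_{T'}\neq[1]_{T'}$ since $T'\nvdash\phi$; this contradicts $(3)$. Therefore $(3)\Rightarrow(1)$, which closes the cycle.

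The genuinely new ingredient compared with Theorem 5.6 is the passage to a linearly ordered model, so the main obstacle is Theorem 5.9 --- constructing a consistent complete supertheory $T'$ that still does not prove $\phi$ --- together with the fact that $\mathcal{SL}_{T'}$ satisfies (U4'). Both are, however, already supplied before the statement (Theorem 5.9 and the remark that $\mathcal{SL}_T$ is always a strong monadic NM-algebra), so the completeness proof itself reduces to assembling these pieces in the Lindenbaum--Tarski style. As a byproduct the argument also yields that $(2)$ and $(3)$ are equivalent for each single formula $\phi$, which is exactly the (chain) completeness of SMNL.
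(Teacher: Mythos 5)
Your proof is correct, and it essentially assembles the ingredients the paper itself points to, though with a different emphasis. The paper's own proof is a one-liner citing Theorem 4.18 together with Theorem 5.8 (note: the complete-supertheory theorem is 5.8 in the paper's numbering, not 5.9 --- 5.9 is the statement being proved); the surrounding text suggests the intended route is to get completeness with respect to all strong monadic NM-algebras via the Lindenbaum--Tarski algebra $\mathcal{SL_T}$, and then to pass from condition (3) to condition (2) by the subdirect representation of Theorem 4.18: every strong monadic NM-algebra embeds into a product of linearly ordered ones, so validity over chains lifts componentwise. You instead bypass Theorem 4.18 entirely and prove $(3)\Rightarrow(1)$ directly in Henkin style: extend $T$ to a consistent complete supertheory $T'$ not proving $\phi$ (the paper's Theorem 5.8), observe that $\mathcal{SL}_{T'}$ is a strong monadic NM-algebra which is linearly ordered because $T'$ is complete (the SMNL analogue of Theorem 5.4), and read off a linearly ordered countermodel from the canonical evaluation $e(\psi)=[\psi]_{T'}$. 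Both routes are sound; yours is syntactically self-contained modulo Theorem 5.8 and the Theorem 5.4 analogue (whose proof indeed transfers verbatim to SMNL), while the paper's leans on the algebraic representation theorem already established in Section 4, which buys the logical result with no further Lindenbaum-style work. The only cosmetic slips in your write-up are the shifted theorem number and the redundant separate clause $e(\forall p_i)=[\forall p_i]_{T'}$, which is forced by the recursion anyway; neither affects correctness.
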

\begin{proof}It follows from Theorems 4.18, 5.8.
\end{proof}
\section{Conclusions}
 Motivated by the previous research of monadic MV-algebras, we introduced and investigated monadic NM-algebras. Then, we discuss the relations between monadic NM-algebras and some related structures. Also, we characterize two kinds of monadic NM-algebras and obatin a monadic analogous of representation theorem for NM-algebras. Finally, we introduce the monadic NM-logic and prove the (chain) completeness of monadic NM-logic.  Since the above topics are of current interest, we suggest further directions of research:
\begin{enumerate}[(1)]
  \item Introducing and studying polyadic NM-algebras, which are further generalizations of monadic NM-algebras given by polyadic structures.
  \item Focusing on the varieties of monadic NM-algebras. In particular, one can investigate semisimple, locally finite, finitely approximated and splitting varieties of monadic NM-algebras as well as varieties with the disjunction and the existence properties.
\end{enumerate}

\medskip
\noindent\textbf{Acknowledgments}
\medskip

\indent The authors are extremely grateful to the editor
and the referees for their valuable comments and helpful suggestions
which help to improve the presentation of this paper.
This study was funded by a grant of National Natural Science Foundation of China (11571281,11601302), Postdoctoral Science Foundation of China
(2016M602761) and Natural Science Foundation of Shaanxi Province (2017JQ1005).


\section*{References}

\end{document}